
\documentclass{daj}

\usepackage{amsthm, amssymb, amsmath}
\usepackage{hyperref}
\usepackage{mathrsfs}
\usepackage{asymptote}
\usepackage{graphicx}
\usepackage{theoremref}

\theoremstyle{plain}
\newtheorem{theorem}{Theorem}
\newtheorem{lemma}{Lemma}
\newtheorem{proposition}{Proposition}
\newtheorem{corollary}{Corollary}
\newtheorem*{question}{Question}

\newtheorem{conjecture}{Conjecture}

\theoremstyle{definition}
\newtheorem*{definition}{Definition}

\numberwithin{equation}{section}
\numberwithin{lemma}{section}
\numberwithin{proposition}{section}

\theoremstyle{remark}
\newtheorem*{remark}{Remark}

\theoremstyle{problem}

\dajAUTHORdetails{%
  title = {A Quantitative Bound For Szemer\'edi's Theorem for a Complexity One Polynomial Progression over $\mathbb{Z}/N\mathbb{Z}$}, 
  author = {James Leng},
  plaintextauthor = {James Leng},
    %
    %
  plaintexttitle = {A Quantitative Bound For Szemeredi's Theorem for a Complexity One Polynomial Progression over Z/NZ}, 
    %
  runningtitle = {Polynomial Szemer\'edi}, 
    %
    %
   %
}   

\dajEDITORdetails{%
   year={2024},
   number={3},
   received={2 June 2022},   
   published={20 May 2024},  
   doi={10.19086/da.117573},       
}   

\begin{document}

\begin{frontmatter}[classification=text]

\title{A Quantitative Bound For Szemer\'edi's Theorem for a Complexity One Polynomial Progression over $\mathbb{Z}/N\mathbb{Z}$} 

\author[jl]{James Leng\thanks{Supported by an NSF Graduate Research Fellowship Grant No. DGE-2034835}}

\begin{abstract}
Let $N$ be a large prime and $P, Q \in \mathbb{Z}[x]$ two linearly independent polynomials with $P(0) = Q(0) = 0$. We show that if a subset $A$ of $\mathbb{Z}/N\mathbb{Z}$ lacks a progression of the form $(x, x + P(y), x + Q(y), x + P(y) + Q(y))$, then $$|A| \le O\left(\frac{N}{\log_{(O(1))}(N)}\right)$$
where $\log_{C}(N)$ is an iterated logarithm of order $C$ (e.g., $\log_{2}(N) = \log\log(N)$). To establish this bound, we adapt Peluse's (2018) degree lowering argument to the quadratic Fourier analysis setting to obtain quantitative bounds on the true complexity of the above progression. Our method also shows that for a large class of polynomial progressions, if one can establish polynomial-type bounds on the true complexity of those progressions, then one can establish polynomial-type bounds on Szemer\'edi's theorem for that type of polynomial progression.
\end{abstract}
\end{frontmatter}

\section{Introduction}
Szemer\'edi's theorem states that given $k$ and any subset $S$ of the natural numbers with positive upper density, there exists a $k$-term arithmetic progression in $S$. The $k = 3$ case of Szem\'eredi's theorem was proved in 1953 by Roth \cite{R} using Fourier analysis and the \textit{density increment argument}. The general case was first proved in 1975 by Szemer\'edi \cite{S}. Shortly after, Furstenberg \cite{F} gave a proof using ergodic theory in 1977. In 2001, Gowers \cite{G2} gave a proof in the similar spirit as Roth, also using Fourier analysis and the density increment argument as well as introducing what is now known as the field of \textit{higher order Fourier analysis}. New proofs of Szemer\'edi's theorem were given by Green and Tao \cite{GT3} using higher order Fourier analysis. 
\subsection{Polynomial Progressions}
Beyond Szemer\'edi's theorem, one can ask a similar question about polynomial progressions, rather than linear progressions. For instance, if a subset of the natural numbers has positive density, then does it necessarily have to contain the progression of the form $(x, x + y, x + y^2)$? This was answered affirmatively by Furstenberg and Weiss \cite{FW}, whose proof followed Furstenberg's ergodic theoretic techniques. Later, Bergelson and Leibman \cite{BL} generalized this to essentially all polynomial ergodic averages. A key disadvantage of ergodic theoretic proofs is that they don't obviously give quantitative bounds, and though they can be rephrased to give quantitative bounds, those bounds are usually very poor. \\\\
In a related problem, one can ask how large a subset of $\mathbb{Z}/N\mathbb{Z}$ can be if it doesn't contain a progression of the form $(x, x + y, x + y^2)$. Bourgain and Chang \cite{BC} proved that if $N$ is prime, such a set can be of size at most $N^{1 -\delta}$ for some positive $\delta > 0$. Dong, Li, and Sawin \cite{DLS} subsequently proved a similar result to that of Bourgain and Chang but with $y$ and $y^2$ replaced by two linearly independent polynomials $P_1(y)$ and $P_2(y)$ with $P_1(0) = P_2(0) = 0$. In a breakthrough paper in 2019, Peluse \cite{P1} showed using a \textit{degree lowering} strategy that if $P_1,\dots,P_n$ are linearly independent polynomials such that $P_i(0) = 0$ for each $i$, and if a subset $S$ of $\mathbb Z/N\mathbb Z$ contains no progression of the form $(x, x + P_1(y), \dots, x + P_n(y))$, then $|S| \le N^{1 - \gamma}$ for some $\gamma > 0$ that depends on $P_1, \dots, P_n$ only. \\\\
Peluse's method was later combined with a quantitative concatenation argument, adapted by Peluse and Prendiville \cite{PP1,PP2} and by Peluse \cite{P2} in the integer case, where they were able to give a Fourier analytic and density increment proof of the following result: if $P_1, \dots, P_n$ have distinct degree and $P_1(0) = \cdots = P_n(0) = 0$, and a subset of $\{1, \dots, N\}$ contains no progression of the form $(x, x + P_1(y), \dots, x + P_n(y))$, then that subset has size at most $\frac{N}{(\log\log(N))^c}$ for some constant $c>0$ depending only on the polynomials. A key difference between the integer case and the $\mathbb{Z}/N\mathbb{Z}$ case is the uneven averaging between $x$ and $y$: in the integer case, one averages along $x$ from $1$ to $N$ and along $y$ from $1$ to $\min_i N^{1/\deg{P_i}}$. This is in essence why the integer case has been established just for polynomials with distinct degree rather than for all sets of linearly independent polynomials. \\\\
For the $\mathbb{Z}/N\mathbb{Z}$ case of the problem, some non-linearly independent and multi-dimensional progressions were studied by Kuca \cite{K3}, \cite{K4}, both relying on the degree lowering method.
\subsection{The True Complexity Problem}
A problem related to the above body of work is the \textit{true complexity problem}. In his proof of Szemer\'edi's theorem \cite{G2}, Gowers showed that
$$\mathbb{E}_{x, y \in \mathbb{Z}/N\mathbb{Z}}f_1(x)f_2(x + y)f_3(x + 2y) \cdots f_k(x + (k - 1)y) \le \|f_i\|_{U^{k - 1}}$$
for all $k \ge 1$ where $\|\cdot \|_{U^{s}}$ is the \textbf{Gowers $U^s$ norm}, and $\mathbb{E}$ an expectation (see section 2 for definitions). More generally, let $V,W$ be vector spaces over a finite field and let $\phi_1, \dots, \phi_\ell: V \to W$ be linear forms with the property that, writing $\phi_i = \dot{\phi_i} + \phi(0)$, $\dot{\phi_i}$ and $\dot{\phi_j}$ are pairwise linearly independent. If $f_1,\dots,f_\ell: W \to \mathbb{C}$ are 1-bounded functions (meaning that $\|f_i\|_\infty\leq 1$ for each $i$), then one can ask the following two questions.
\begin{question}
For each $1 \le j \le \ell$, do there exist $s$ and some positive function $c$, depending only on the linear forms $(\phi_i)_{i = 1}^\ell$, such that $c(x)$ tends to zero as $x$ tends to zero, and such that
$$|\mathbb{E}_{v \in V} \prod_{i = 1}^\ell f_i(\phi_i(v))| \le c(\|f_j\|_{U^s})?$$
\end{question}

\begin{question}
If such $s$ and $c$ exist, what is the smallest value of $s$ for which this occurs?
\end{question}

Green and Tao showed in \cite{GT4} that such an $s$ exists with $c$ some polynomial. Gowers and Wolf \cite{GW1}, \cite{GW2} observed that if $P$ is some $\ell$-variable polynomial with no multivariate monomials such that $P(\phi_1, \dots, \phi_\ell) = 0$, then $s$ is lower bounded by the $\deg_{x_j}(P) + 1$, where $\deg_{x_j}(P)$ is the $x_j$ degree of $P$, and thus conjectured and proved in a large number of cases that this is in fact the smallest value of $s$. This conjecture was proven to be true in all cases by the combined work of \cite{GW1}, \cite{GW2}, \cite{GW3} \cite{GT3}, \cite{A}. Recently, Manners \cite{M} gave an ``elementary'' proof yielding polynomial bounds for the true complexity conjecture for a large family of cases. \\\\
As with Szemer\'edi's Theorem, one may ask a similar true-complexity question for polynomial progressions. Kuca \cite{K1}, \cite{K2} used Green and Tao's arithmetic regularity lemma \cite{GT3} on the \textit{true-complexity problem} for polynomial progressions. 
\subsection{Main New Results} 
Let $P, Q \in \mathbb{Z}[x]$ be linearly independent polynomials of degree at least 2, with $P(0) = Q(0) = 0$, and let
$$\Lambda(f, g, k, p) = \mathbb{E}_{x, y} f(x)g(x + P(y))k(x + Q(y))p(x + P(y) + Q(y)),$$
$$\Lambda^1(f, g, k, p) = \mathbb{E}_{x, y, z} f(x)g(x + y)k(x + z)p(x + y + z).$$
A special case of Kuca's work is the following pair of results (see Section 2 for notation).
\begin{theorem}[\hspace{-0.01in}{\cite[Theorem 1.3]{K1}}]\thlabel{kuca1}
If $f, g, k, p: \mathbb{Z}/N\mathbb{Z} \to \mathbb{C}$ are 1-bounded and
$$|\Lambda(f, g, k, p)| \ge \delta$$
then there exists $c_{P, Q}(\delta) > 0$ such that
$$\|f\|_{U^2} \ge c(\delta).$$
\end{theorem}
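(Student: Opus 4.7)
The plan is to first reduce the hypothesis $|\Lambda(f,g,k,p)| \geq \delta$ to control by a Gowers norm $\|f\|_{U^s}$ for some (possibly large) $s$, and then to iteratively ``lower the degree'' of this control until reaching $s=2$.

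\textbf{Step 1 (PET-type Cauchy-Schwarz).} I start from $|\Lambda(f,g,k,p)| \geq \delta$ and apply the Cauchy-Schwarz inequality in the $y$ variable, combined with van der Corput-type differentiations in the spirit of the PET induction of Bergelson-Leibman. Each Cauchy-Schwarz step replaces $P(y), Q(y)$ by their discrete derivatives $P(y+h)-P(y), Q(y+h)-Q(y)$, which have strictly lower degree. After a bounded number of iterations depending only on $\max(\deg P, \deg Q)$, the functions $g$, $k$, $p$ are eliminated and one obtains a bound of the form
\[ \|f\|_{U^s}^{2^s} \geq c(\delta) \]
for some integer $s = s(P, Q)$ and some explicit (polynomial-type) function $c$. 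The linear independence of $P$ and $Q$ enters here to ensure the resulting systems of linear forms remain non-degenerate at every stage.

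\textbf{Step 2 (Degree lowering).} I then inductively pass from $U^{s'}$ control of $f$ to $U^{s'-1}$ control, for $s' = s, s-1, \dots, 3$. Suppose $\|f\|_{U^{s'}} \geq \eta$. By the inverse theorem for the $U^{s'}$ norm, $f$ correlates, after a suitable localization, with a degree $(s'-1)$ nilsequence $\chi$. Substituting $\chi$ in place of $f$ in $\Lambda$ and invoking a dual-function argument, the resulting expression is rewritten as $\mathbb{E}_x f(x) \overline{F(x)}$ where $F$ is a dual-type function built from $g, k, p$ and the configuration. The ``box'' shape of $(x, x+P(y), x+Q(y), x+P(y)+Q(y))$ then permits a second Cauchy-Schwarz showing that $F$ itself is controlled by a lower-order Gowers norm, forcing $\|f\|_{U^{s'-1}}$ to be large. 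This is the quadratic-Fourier-analytic adaptation of Peluse's degree-lowering strategy advertised in the introduction.

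\textbf{Main obstacle.} The delicate step is the degree lowering in Step 2, especially the final descent landing in $U^2$, where genuine quadratic obstructions must be eliminated rather than swept away as negligible error. Making this quantitative requires carefully organizing the Cauchy-Schwarz manipulations so that each one respects the box product structure of the configuration, and exploiting the linear independence of $P$ and $Q$ throughout to ensure that the shifts introduced by differentiation are genuinely non-trivial and that the attendant polynomial systems do not degenerate.
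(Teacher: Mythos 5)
Your high-level skeleton (PET induction to get some $U^s$ control, then degree lowering down to $U^2$) is exactly the route this paper takes for its quantitative strengthening of the statement (the paper does not reprove Kuca's theorem itself, which it cites; Kuca's original proof goes through the arithmetic regularity lemma). But Step 2, which is where essentially all the difficulty lives, does not work as written. The logic is inverted: from $\|f\|_{U^{s'}}\ge\eta$ and the $U^{s'}$ inverse theorem you obtain a correlation of $f$ with a single nilsequence $\chi$, but that does not justify ``substituting $\chi$ in place of $f$ in $\Lambda$'' (correlation alone says nothing about $\Lambda(\chi,g,k,p)$), and largeness of $\|f\|_{U^{s'}}$ by itself can never force largeness of $\|f\|_{U^{s'-1}}$ --- the largeness of the original count must be re-used at every stage. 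In the actual argument the degree lowering is run not on $f$ but on dual functions: one writes $\Lambda$ as $\mathbb{E}_x p(x)\mathcal{D}(f,g,k)(x)$ (the ``stashing'' step), applies the dual--difference interchange lemma of Peluse to pass from $\|\mathcal{D}\|_{U^{s+1}}$ to an average over $h$ of $\|\mathcal{D}_h\|_{U^3}$, and then invokes only the $U^3$ inverse theorem on the differenced duals --- not a general $U^{s'}$ inverse theorem applied to $f$.

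Moreover, the two ingredients that actually make the descent possible are missing from your sketch. First, one needs an equidistribution analysis of the resulting two-step nilcharacter evaluated along $(P(y)+Q(y),\,P(y),\,Q(y))$, which exploits the linear independence of $P$ and $Q$ together with quantitative Leibman/Green--Tao equidistribution to force the nilsequence onto an abelian (degree-one) nilmanifold, i.e.\ to reduce the obstruction to a genuine quadratic phase $e(\beta x^2+\alpha x)$. Second, one needs the known quantitative count for the three-term linearly independent pattern $(x,\,x+P(y),\,x+Q(y))$ (Dong--Li--Sawin, Peluse): after using the identity $(x+P+Q)^2=(x+P)^2+(x+Q)^2-x^2+2PQ$, that count is what shows the quadratic frequency $\beta$ must vanish, which is the actual ``degree lowering''; the same three-term input is then reused to transfer the conclusion to the desired function and iterate down to $U^2$. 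A ``second Cauchy--Schwarz respecting the box structure'' does not by itself eliminate the quadratic obstructions. So Step 1 is fine, but Step 2 needs to be reorganized around dual functions, the dual--difference interchange, the $U^3$ inverse theorem, the equidistribution lemma for the nilcharacter along the polynomial orbit, and the three-term counting theorem.
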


\begin{theorem}[\hspace{-0.01in}{\cite[Theorem 9.1]{K1}}]\thlabel{kuca2}
If $f, g, k, p: \mathbb{Z}/N\mathbb{Z} \to \mathbb{C}$ are 1-bounded, then
$$\Lambda(f, g, k, p) = \Lambda^1(f, g, k, p) + o_{N \to \infty}(1).$$
\end{theorem}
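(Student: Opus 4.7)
The plan is to reduce the comparison of $\Lambda$ and $\Lambda^1$ to an evaluation on the pure linear phases $e_\alpha(x) := e(\alpha x / N)$ and then apply equidistribution of polynomial exponential sums. Both operators are controlled by the $U^2$-norm of each of their four arguments, which lets me replace each function by its low-frequency Fourier approximation at a small cost.

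First, I would establish that $\Lambda$ is $U^2$-controlled in each of the four coordinates: Theorem~\thref{kuca1} directly handles the first coordinate, and the substitutions $x \mapsto x + P(y)$, $x \mapsto x + Q(y)$, and $x \mapsto x + P(y) + Q(y)$ permute the positions of $f, g, k, p$ while transforming $\Lambda$ into an average of the same shape with the polynomial pair $(P, Q)$ replaced by $(-P, Q)$, $(-Q, P)$, and $(-P, -Q)$ respectively. Each of these pairs is still linearly independent and vanishes at $0$, so Theorem~\thref{kuca1} applies in each case and yields $U^2$-control in the corresponding coordinate. An analogous (and easier) Fourier-inversion argument yields coordinate-wise $U^2$-control of $\Lambda^1$. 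Fixing a threshold $\eta = \eta(N) \to 0$, I then decompose $f_j = f_j^{\mathrm{str}} + f_j^{\mathrm{unif}}$, where $f_j^{\mathrm{str}}$ collects the Fourier modes of $f_j$ of magnitude exceeding $\eta$: this is a linear combination of at most $\eta^{-2}$ characters by Parseval, and $\|f_j^{\mathrm{unif}}\|_{U^2} \leq \eta^{1/2}$. Expanding $\Lambda - \Lambda^1$ multilinearly produces $2^4 = 16$ terms; any term containing at least one uniform factor vanishes as $\eta \to 0$ by the coordinate-wise $U^2$-control of both $\Lambda$ and $\Lambda^1$.

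The remaining ``all structured'' piece is a sum of at most $\eta^{-8}$ pure-phase evaluations with weights of absolute value at most $1$. A direct computation gives
$$\Lambda^1(e_{\alpha_1}, e_{\alpha_2}, e_{\alpha_3}, e_{\alpha_4}) = \mathbf{1}[\alpha_1 + \alpha_2 + \alpha_3 + \alpha_4 \equiv 0] \cdot \mathbf{1}[\alpha_2 + \alpha_4 \equiv 0] \cdot \mathbf{1}[\alpha_3 + \alpha_4 \equiv 0]$$
and
$$\Lambda(e_{\alpha_1}, e_{\alpha_2}, e_{\alpha_3}, e_{\alpha_4}) = \mathbf{1}[\alpha_1 + \alpha_2 + \alpha_3 + \alpha_4 \equiv 0] \cdot \mathbb{E}_y\, e\!\left(\tfrac{(\alpha_2 + \alpha_4) P(y) + (\alpha_3 + \alpha_4) Q(y)}{N}\right).$$
These agree when $\alpha_2 + \alpha_4 \equiv \alpha_3 + \alpha_4 \equiv 0 \pmod{N}$; otherwise, linear independence of $P, Q$ over $\mathbb{Q}$ guarantees (for $N$ prime larger than the determinant of some fixed nonsingular $2 \times 2$ submatrix of their coefficient matrix) that $(\alpha_2 + \alpha_4) P + (\alpha_3 + \alpha_4) Q$ reduces to a nontrivial polynomial of bounded degree modulo $N$, and the Weil bound for complete exponential sums yields $|\mathbb{E}_y e(\cdot/N)| = O(N^{-1/2})$, uniformly in the frequencies. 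Choosing $\eta$ so that $\eta \to 0$ but $\eta^{-8} N^{-1/2} \to 0$ (for instance $\eta = N^{-1/32}$) then closes the argument.

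The main obstacle is the careful bookkeeping of quantitative dependencies: the Weil-type bound in the final step must hold uniformly over all frequency choices, and the $U^2$-control constants in each of the four positions of $\Lambda$ must depend only on $P, Q$ rather than on the auxiliary functions. A secondary technical point is verifying that each coordinate-permuting substitution genuinely produces a polynomial pair satisfying the hypotheses of Theorem~\thref{kuca1}, i.e., that linear independence and vanishing at $0$ are preserved.
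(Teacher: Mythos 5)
Your plan has the right shape --- it mirrors the paper's own deduction of \thref{count}: a structured/uniform splitting, a pure-phase main-term computation (your character evaluation plus Weil is essentially Lemma \thref{reduce} specialized to phases, where the paper instead quotes \thref{poly}), and $U^2$-control to kill the uniform parts. The step that fails is the disposal of the fifteen terms containing a uniform factor. Your structured pieces $f_j^{\mathrm{str}}$ are not one-bounded: they are only $O(\eta^{-1})$-bounded in $L^\infty$ (Cauchy--Schwarz over at most $\eta^{-2}$ large modes), while \thref{kuca1} is stated for one-bounded inputs and gives no rate whatsoever for $c_{P,Q}(\delta)$. After the necessary rescaling, a term such as $\Lambda(f^{\mathrm{str}},g^{\mathrm{str}},k^{\mathrm{str}},p^{\mathrm{unif}})$ is only bounded by $\eta^{-O(1)}\,\omega(\eta^{1/2})$, where $\omega$ is the inverse of $c_{P,Q}$; Kuca's proof (via arithmetic regularity) gives nothing like polynomial decay for $\omega$, and even the paper's quantitative \thref{gowers} only gives $\omega(t)\sim 1/\log_{O(1)}(1/t)$. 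Hence no choice of threshold --- certainly not $\eta=N^{-1/32}$, nor any $\eta(N)\to 0$, nor a fixed small $\eta$ --- makes $\eta^{-O(1)}\omega(\eta^{1/2})$ small: the single cutoff $\eta$ ties the complexity of the structured part to the uniformity of the uniform part, and the polynomial loss in $\eta^{-1}$ swamps the rateless gain. (The problem is only on the $\Lambda$ side; for $\Lambda^1=\sum_\xi \hat f(\xi)\hat g(-\xi)\hat k(-\xi)\hat p(\xi)$ you have the unconditional H\"older bound by $\prod_j\|f_j\|_{U^2}$.)

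The missing idea is to decouple those two scales, which is exactly what the paper's Hahn--Banach regularity lemma \thref{regularity} does. The paper decomposes only $p$ (so $f,g,k$ stay one-bounded and no coordinate-permuting substitutions are needed), obtaining $p=p_a+p_b+p_c$ with four independent parameters: $\|\widehat{p_a}\|_{\ell^1}\le\epsilon_1^{-1}$, $\|p_b\|_{L^1}\le\epsilon_2$, $\|p_c\|_{L^\infty}\le\epsilon_3^{-1}$ and $\|\widehat{p_c}\|_{L^\infty}\le\epsilon_4$. The structured piece is fed through the quantitative count of Lemma \thref{reduce}, so its unboundedness costs only $\|\widehat{p_a}\|_{\ell^1}N^{-\gamma}$, which the power saving absorbs; the uniform piece is first renormalized to the one-bounded function $\epsilon_3 p_c$ and only then hit with the $U^2$-control, giving a bound of the shape $\epsilon_3^{-1}\omega\bigl((\epsilon_3\epsilon_4)^{1/2}\bigr)$, which closes because $\epsilon_4$ may be taken arbitrarily small compared with $\epsilon_3$ --- a freedom your single-threshold Fourier cut does not have. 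So to repair your argument, replace the hard threshold by a three-part decomposition of this type (or Green--Tao arithmetic regularity with an arbitrary growth function), and route the structured part through the quantitative three-term count rather than through \thref{kuca1}; your coordinate-wise-control observation and the Weil computation can then be kept as is, though decomposing only the fourth function already suffices.
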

We will show the following:
\begin{theorem}\thlabel{count}
Let $N$ be a prime and suppose that $f, g, k, p\colon \mathbb{Z}/N\mathbb{Z} \to \mathbb{C}$ are 1-bounded. Then
$$\Lambda(f, g, k, p) = \Lambda^1(f, g, k, p) + O\left(\frac{1}{\log_{(O(1))}(N)}\right).$$
\end{theorem}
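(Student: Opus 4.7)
The plan is to first establish a quantitative version of Kuca's true-complexity theorem \thref{kuca1} for this progression, namely
$$|\Lambda(f, g, k, p)| \le C\|f\|_{U^2}^{c} + O\!\left(\tfrac{1}{\log_{(O(1))}(N)}\right)$$
with absolute $C, c > 0$, and then to deduce the desired comparison with the box average $\Lambda^1$ by a standard multilinear Fourier decomposition. The heart of the argument is the quantitative true-complexity bound, which I would obtain by adapting Peluse's degree-lowering scheme \cite{P1} to the quadratic Fourier-analytic setting.

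First, I would run the standard PET induction: a chain of Cauchy--Schwarz / van der Corput applications in the variable $y$ that successively linearize the polynomial shifts and culminate in an estimate $|\Lambda(f, g, k, p)| \le \|f\|_{U^s}$ for some $s = s(\deg P, \deg Q)$. This step contains no new ideas; it only requires care in tracking the shape of the differenced polynomial progression.

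Next comes the degree-lowering step, which is the main technical content. Starting from $|\Lambda| \ge \delta$, I would iteratively replace $U^s$ control of $f$ by $U^{s-1}$ control. At each stage, the hypothesis $|\Lambda| \ge \delta$ implies that the dual function $F_y(x) = g(x+P(y))k(x+Q(y))p(x+P(y)+Q(y))$ has large $U^{s-1}$ norm on average in $y$; the quantitative inverse theorem for that norm then exhibits a nilsequence of degree $s-2$ correlating with $F_y$; finally, the linear independence of $P$ and $Q$ together with one further Cauchy--Schwarz transfers this structure back from $F_y$ to $f$, producing a correlation of $f$ with a nilsequence of degree $s-2$, i.e., $U^{s-1}$ control of $f$. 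The delicate step is the quadratic passage $s = 3 \to 2$, where Manners's inverse theorem \cite{M} must be invoked; its iterated-logarithmic losses are the source of the final $\log_{(O(1))}(N)$ bound.

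With quantitative $U^2$ control in all four slots (the analogues in the remaining slots following by symmetry, and for $\Lambda^1$ by a direct Fourier expansion), the comparison is routine. I would decompose each function as $f_j = f_j^{\sharp} + f_j^{\flat}$, where $f_j^{\sharp}$ is a bounded-length sum of Fourier characters with $|\hat f_j(\xi)| \ge \eta$ and $\|f_j^{\flat}\|_{U^2} \le \sqrt{\eta}$, and expand $\Lambda - \Lambda^1$ multilinearly. Terms containing some $f_j^{\flat}$ contribute at most $\eta^{c/2}$ by the quantitative true-complexity bound. The remaining ``all-structured'' term is a sum over boundedly many character quadruples; on each such quadruple, $\Lambda - \Lambda^1$ is a Weyl-type sum $M(\alpha, \beta) = \mathbb{E}_y e((\alpha P(y) + \beta Q(y))/N)$ with $(\alpha, \beta) \ne (0, 0)$, which is $O(N^{-1/2})$ by the Weil bound (using that $N$ is prime and $P, Q$ are linearly independent, so $\alpha P + \beta Q$ is a nonzero polynomial modulo $N$). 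Optimizing $\eta$ shows that these two sources of error are dominated by the iterated-log loss from the true-complexity step, yielding the claimed bound $O(1/\log_{(O(1))}(N))$. The principal obstacle is the degree-lowering step, and within it the quadratic reduction $U^3 \to U^2$, which forces the iterated-logarithmic dependence on $N$.
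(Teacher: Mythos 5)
Your high-level architecture (PET induction, then degree lowering down to $U^2$, then a Fourier-type decomposition to compare $\Lambda$ with $\Lambda^1$) is the same as the paper's, but there are two substantive problems, one of which is a genuine gap.

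The genuine gap is in the final decomposition step. You split each $f_j = f_j^{\sharp}+f_j^{\flat}$ by a single Fourier threshold at height $\eta$. This forces $\|f_j^{\flat}\|_{L^\infty}$ and $\|f_j^{\flat}\|_{U^2}^{-1}$ to both be comparable to fixed powers of $\eta^{-1}$ (indeed $\|f_j^{\flat}\|_{L^\infty} \lesssim 1+\eta^{-1}$ while $\|f_j^{\flat}\|_{U^2} \lesssim \sqrt{\eta}$). The Gowers uniformity estimate that is actually available here is of iterated-logarithmic strength: \thref{gowers} gives, after normalization, $|\Lambda| \lesssim 1/\log_{(O(1))}(\|p\|_{U^2}^{-1})$, \emph{not} the polynomial bound $C\|f\|_{U^2}^c$ that you posit. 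After you normalize the $\flat$ part by its $L^{\infty}$ bound and apply this, the contribution of any cross term containing a $\flat$ is of the order $\eta^{-O(1)}\big/\log_{(O(1))}(\eta^{-O(1)})$, which goes to infinity as $\eta\to 0$, and which cannot be made $o(1)$ for any choice of $\eta$ consistent with the Weil-sum error $\eta^{-O(1)}N^{-1/2}$ from the structured term. This is exactly the obstruction that the paper's use of Peluse's regularity lemma (\thref{regularity}, giving a \emph{three}-part decomposition $p=p_a+p_b+p_c$) is designed to evade: the extra piece $p_b$ with small $L^1$ norm buys the freedom to take $\|p_c\|_{L^\infty}\le \epsilon_3^{-1}$ only iterated-logarithmically large while $\|\widehat{p_c}\|_{L^\infty}\le \epsilon_4$ is polynomially small, i.e.\ $\epsilon_3$ and $\epsilon_4$ are decoupled. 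With a one-parameter truncation there is no such decoupling, and the argument fails. Relatedly, your promised polynomial-type true-complexity bound $|\Lambda|\le C\|f\|_{U^2}^c$ is not proved in the paper and is not a consequence of its methods; the paper only achieves iterated-exponential (equivalently iterated-logarithmic) dependence, which is precisely why the extra flexibility of \thref{regularity} is essential.

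Two further (lesser) discrepancies: the inverse theorem driving the degree-lowering in the paper is the Green--Tao / Jamneshan--Tao $U^3$ inverse theorem (\thref{u3}), not a theorem of Manners, and the paper applies it at every stage via the dual-difference interchange (\thref{dual}), with the iterated-logarithm loss accruing over the whole iteration rather than from a single ``quadratic passage.'' Also, the paper deliberately arranges matters so that $U^2$ control is only needed in the $p$ slot; \thref{reduce} then replaces your Weil-bound computation on character quadruples by an appeal to Peluse's stronger estimate (\thref{poly}), which handles arbitrary one-bounded $f,g,k$ and so requires the decomposition of only one function. Proving $U^2$ control in all four slots, as you propose, is unnecessary extra work for the deduction of \thref{count}.
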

To prove this, we prove
\begin{theorem}\thlabel{gowers}
Suppose $N$ is a prime, $f, g, k, p\colon \mathbb{Z}/N\mathbb{Z} \to \mathbb{C}$ are 1-bounded, and
$|\Lambda(f, g, k, p)| \ge \epsilon$. Then either $\epsilon \ll \frac{1}{\log_{(O(1))}(N)}$ or
$$\|p\|_{U^2} \ge c(\epsilon).$$
where $c(x) = 1/\exp_{O(1)}(x^{-1})$ is an iterated exponential which iterates $O(1)$ times.
\end{theorem}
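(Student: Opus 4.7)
The plan is to follow the Peluse degree-lowering strategy, adapted to the quadratic Fourier-analytic setting needed to handle this complexity-one progression over $\mathbb{Z}/N\mathbb{Z}$. The argument naturally breaks into two stages: first, a PET-type iteration to establish initial Gowers-norm control of high degree; second, an iterative degree-lowering procedure to push this control all the way down to $U^2$.

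For the first stage, I would apply a bounded number of van der Corput / Cauchy--Schwarz inequalities to $\Lambda(f,g,k,p)$, differencing the functions $f$, $g$, $k$ out one at a time while keeping $p$ fixed. A standard PET computation then yields that $|\Lambda(f,g,k,p)| \ge \epsilon$ implies $\|p\|_{U^s} \gg \epsilon^{O(1)}$ for some finite $s$ depending only on $\max(\deg P, \deg Q)$, and the losses at this stage are polynomial in $\epsilon$.

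For the second stage, write $\Lambda(f, g, k, p) = \langle p, F\rangle$ with the dual function
\[
F(u) = \mathbb{E}_y\, f(u - P(y) - Q(y))\, g(u - Q(y))\, k(u - P(y)),
\]
and show: if $\|p\|_{U^s} \ge \delta$ with $s \ge 3$, then $\|p\|_{U^{s-1}} \ge c(\delta)$ with $c$ quasi-polynomial. To do this, I would invoke Manners's quasi-polynomial $U^s$-inverse theorem to correlate $p$ with a step-$(s-1)$ nilsequence $\psi$, and then exploit the averaged polynomial structure of $F$ to show that this nilsequence may in fact be replaced by one of step $s-2$: Cauchy--Schwarzing the correlation $\langle p, F\rangle$ in a shift $u \mapsto u + h$ and performing a change of variables in the $y$-integral using the linear independence of $P$ and $Q$, one rewrites $\overline{F(u)}\,F(u+h)$ as a dual function of strictly lower combinatorial complexity; transferring the resulting correlation onto $\Delta_h p$ and averaging over $h$ then yields $\|p\|_{U^{s-1}}$ control. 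Iterating $O(1)$ times brings us down to $U^3$, and the final descent from $U^3$ to $U^2$ proceeds similarly, using the quadratic ($U^3$) inverse theorem together with the fact that the associated linear form $\Lambda^1$ has (genuine) Cauchy--Schwarz complexity one.

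The main obstacle lies precisely in this degree-lowering step in the quadratic setting: unlike Peluse's integer / distinct-degree situation, where asymmetric averaging in $y$ literally reduces shifts to lower-degree polynomial shifts, here $P$ and $Q$ can have equal degrees, so the $y$-shift/change-of-variables argument must be carried out using only abstract linear independence, presumably by working within short-interval or Weyl-type equidistribution estimates on $\mathbb{Z}/N\mathbb{Z}$. Quantitatively, each application of the inverse theorem costs a factor of the form $\delta \mapsto \exp(-\log^{O(1)}(1/\delta))$, so iterating a bounded number of times --- bounded purely in terms of $\deg P + \deg Q$ and independent of $\epsilon$ --- produces exactly the iterated-exponential bound $c(\epsilon) = 1/\exp_{O(1)}(\epsilon^{-1})$ claimed. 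Keeping the iteration count bounded in this $\epsilon$-free way is what prevents a tower-type blowup and is essential for the iterated-logarithm bound in \thref{count}.
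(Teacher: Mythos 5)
Your high-level template --- PET induction to obtain an initial $U^s$ estimate, followed by an iterative degree-lowering step driven by inverse theorems and Cauchy--Schwarz on the dual $F(u) = \mathbb{E}_y f(u-P(y)-Q(y))g(u-Q(y))k(u-P(y))$ --- matches the paper. But the degree-lowering mechanism you describe diverges from the paper's in a way that leaves a genuine gap. You propose invoking a $U^s$ inverse theorem at each stage to produce an $(s-1)$-step nilsequence correlating with $p$. The paper never touches the general $U^s$ inverse theorem: it instead applies a dual-difference interchange lemma (\thref{dual}, a generalization of Peluse's Lemma 5.1) to $\mathcal{D}(f,g,k)$, converting the assumption $\|\mathcal{D}(f,g,k)\|_{U^{s+1}}\ge\delta$ into an averaged $U^3$ bound on the differenced duals $\mathcal{D}_h(f,g,k)$ over $(s-2)$ extra $h$-variables, so that only the $U^3$ inverse theorem and $2$-step nilmanifolds ever appear. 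This reduction is what makes the whole argument tractable, and it is absent from your outline.

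The heart of the paper's proof is also missing from yours. After applying the $U^3$ inverse theorem to $\mathcal{D}_h(f,g,k)$, the paper replaces the resulting nilcharacter by a product of dual nilcharacters via the Host--Kra parallelepiped structure (\thref{dualus}), substitutes the polynomial shifts $h_1 = -P(y)$, $h_2 = -Q(y)$, $h_3 = -x$ to realize a nonabelian analogue of the identity $(x+P+Q)^2 = (x+P)^2+(x+Q)^2-x^2+2PQ$, and is then left with a cross-term nilsequence $K(y)$ encoding the nonlinear interaction of $P(y)$ and $Q(y)$. Controlling $K(y)$ requires the equidistribution lemma \thref{equidistribution}, which uses Green--Tao quantitative equidistribution on nilmanifolds and the linear independence of $P,Q$ --- not for a change of variables in $y$, but to force two horizontal characters $\eta_1,\eta_2$ to vanish simultaneously, descending the nilcharacter to a one-step nilmanifold. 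Once there, Peluse's $O(N^{-\gamma})$ estimate for linearly independent progressions (\thref{peluse}) kills the remaining genuinely quadratic phase $e(\beta_h x^2)$. Your proposed step (``rewrite $\overline{F(u)}F(u+h)$ as a dual function of strictly lower complexity via a $y$-change of variables'') doesn't supply a substitute for any of this; differencing $F$ in $x$ does not lower the $y$-degree, and linear independence alone gives no short-interval or Weyl estimate that would do the job here. Finally, your loss accounting is inconsistent with the claimed bound: quasi-polynomial loss per step times $O(1)$ steps would give a quasi-polynomial overall bound, not the iterated-exponential $1/\exp_{O(1)}(\epsilon^{-1})$; the real bottleneck (as the paper notes) is that \thref{equidistribution} is doubly exponential in the nilmanifold dimension, which is itself polynomial in $\delta^{-1}$.
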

Usually, to prove quantitative bounds on Szemer\'edi-type theorems, one proves a Gowers uniformity estimate combined with the density increment argument. One can prove Gowers uniformity estimates for polynomials. However, as observed in \cite{Pr2}, it can be difficult to adapt the density increment argument to polynomials, and it's not clear how any Gowers uniformity estimate, other than a $U^1$ type estimate can help control progressions on polynomials. One can view \thref{count} and the work of Frantzikinakis, Peluse, Prendiville, Kuca \cite{Fr1}, \cite{P1}, \cite{P2}, \cite{PP1}, \cite{PP2}, \cite{K3}, \cite{K4} as suggesting that the role of Gowers uniformity estimates in polynomial progressions is to facilitate the reduction of quantitatively counting polynomial progressions by analogous linear progressions, and then quantitatively counting those linear progressions using tools we already have. However, the reduction towards counting a linear progression turns out to require Gowers uniformity estimates of low uniformity. For instance, Peluse \cite{P1} required a $U^1$ type estimate to prove the main theorem, and it turns out that our proof of \thref{count} will require a $U^2$ type bound such as \thref{gowers}. \\\\
In fact, we will show via a weak regularity lemma that \thref{gowers} implies \thref{count}. Our strategy for the proof of \thref{gowers} follows Peluse's degree lowering strategy, adapted using the $U^3$ inverse theorem of Green and Tao \cite{GT1} in place of the $U^2$ inverse theorem. \\\\
Note that an immediate corollary of \thref{count} is as follows.
\begin{corollary}
Suppose a subset $A$ of $\mathbb{Z}/N\mathbb{Z}$ lacks progressions of the form $(x, x + P(y), x + Q(y), x + P(y) + Q(y))$. Then $|A| \le \frac{N}{\log_{(O(1))}(N)}$. 
\end{corollary}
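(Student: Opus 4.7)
The plan is to derive this corollary as a short deduction from \thref{count}, applied to the indicator $f = g = k = p = \mathbf{1}_A$. Write $\alpha = |A|/N$. If $A$ contains no nontrivial progression of the prescribed form, then every contribution to $\Lambda(\mathbf{1}_A, \mathbf{1}_A, \mathbf{1}_A, \mathbf{1}_A)$ comes from degenerate pairs $(x, y)$ for which $P(y) \equiv Q(y) \equiv 0 \pmod{N}$. Since $N$ is prime and $P, Q$ are fixed nonzero polynomials, there are $O_{P,Q}(1)$ such values of $y$, so trivially
$$\Lambda(\mathbf{1}_A, \mathbf{1}_A, \mathbf{1}_A, \mathbf{1}_A) \ll_{P,Q} \alpha/N.$$

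On the linear side, a direct Fourier expansion gives
$$\Lambda^1(\mathbf{1}_A, \mathbf{1}_A, \mathbf{1}_A, \mathbf{1}_A) = \sum_{\xi \in \mathbb{Z}/N\mathbb{Z}} |\widehat{\mathbf{1}_A}(\xi)|^4 \ge \alpha^4,$$
where the lower bound comes just from the $\xi = 0$ term. Plugging both estimates into \thref{count} yields
$$\alpha^4 \le O_{P,Q}(\alpha/N) + O\!\left(\frac{1}{\log_{(O(1))}(N)}\right).$$
For $N$ larger than an absolute constant the first error term is negligible against the target bound, so I can rearrange to get $\alpha \ll \log_{(O(1))}(N)^{-1/4}$.

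It then remains to absorb the exponent $1/4$. Because $(\log_{(k)}(N))^{1/4}$ eventually exceeds $\log_{(k+1)}(N)$, passing to one additional iteration of the logarithm upgrades the exponent back to $1$ at the cost of increasing the implicit $O(1)$ index by one; this gives $|A| \ll N/\log_{(O(1))}(N)$ as claimed. There is no genuine obstacle in this deduction: \thref{count} has already supplied all of the substantive input, and what remains is only a Plancherel lower bound for parallelogram counts together with the elementary root-count bound for $P$ and $Q$ in the field $\mathbb{Z}/N\mathbb{Z}$.
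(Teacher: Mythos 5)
Your proposal is correct and follows essentially the same route as the paper: apply \thref{count} with $f=g=k=p=\mathbf{1}_A$, lower bound the linear count $\Lambda^1$ by $\alpha^4$ (your $\xi=0$ Fourier term is the same as the paper's $\|\mathbf{1}_A\|_{U^1}\le\|\mathbf{1}_A\|_{U^2}$ step), and use the absence of progressions to make $\Lambda$ negligible. Your explicit handling of the degenerate solutions $P(y)\equiv Q(y)\equiv 0$ and of absorbing the fourth root into one extra iterated logarithm is slightly more careful than the paper's write-up, but it is the same argument.
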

\begin{proof}
Applying \thref{count} to $f = g = k = p = 1_A$, it follows that $\Lambda(f, g, k, p) = \|1_A\|_{U^2}^4 + O(1/\log_{(O(1))}N)$. Let $\delta$ be the density of $A$. Then using the fact that
$$\delta = \|1_A\|_{U^1} \le \|1_A\|_{U^2}$$
it follows that if $\Lambda(f, g, k, p) \le 1/N$, then $\|1_A\|_{U^2} \le O\left(\frac{1}{\log_{(O(1))}(N)}\right)$ implying that
$$\delta \le O\left(\frac{1}{\log_{(O(1))}(N)}\right).$$
\end{proof}
\subsection{Further Work}
\thref{kuca1} and \thref{kuca2} are actually special cases of Kuca's results in \cite{K1} and \cite{K2}. Kuca was able to prove the following: if $(x + P_1(y), \dots, x + P_n(y))$ is a zero set of a homogeneous polynomial, that is, there exists $(a_i)_{i = 1}^n$, and $d$ such that $\sum_i a_i (x + P_i(y))^d = 0$, then the quantitative count is controlled by the true complexity of the polynomial progression $(x + P_1(y), \dots, x + P_n(y))$. We believe that the methods here should extend to analogous results for \thref{count} to those found in \cite{K1} for polynomial patterns of true complexity one, among other progressions considered by Kuca. For example, we conjecture that somewhat similar methods to the methods in this paper can be used to establish the following:
\begin{conjecture}\thlabel{conj1}
Let $P_1, \dots, P_s \in \mathbb{Z}[x]$ be linearly independent polynomials that vanish at zero. Let $f_\omega: \mathbb{Z}/N\mathbb{Z} \to \mathbb{C}$ be 1-bounded functions for each $\omega \in \{0, 1\}^s$. Let $0 \le t \le s$ be natural numbers. Then
$$\mathbb{E}_{x, y} \prod_{\omega \in \{0, 1\}^s: |\omega| = t} f_\omega(x + \omega \cdot P(y)) = \mathbb{E}_{x, h_1, \dots, h_s} \prod_{\omega \in \{0, 1\}^s: |\omega| \le t} f_\omega (x + \omega \cdot h) + O\left(\frac{1}{\log_{(O(1))}(N)}\right).$$
Here, $P(y) = (P_1(y), P_2(y), \dots, P_s(y))$, $\omega = (\omega_1, \dots, \omega_s)$, and $\omega \cdot P(y) = \sum_{i = 1}^s \omega_i P_i(y)$.
\end{conjecture}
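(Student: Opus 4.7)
The plan is to mimic the three-step strategy by which \thref{count} was derived from \thref{gowers} in the body of the paper, extending each step from the $s=2$ configuration to general $s$ and $0 \le t \le s$. First I would establish the analog of \thref{gowers}: assuming the left-hand side exceeds $\delta$, iterate Cauchy--Schwarz in a Bergelson--Leibman style PET scheme to bound $\delta$ by $\|f_{\omega^\star}\|_{U^{d+1}}$ for a distinguished $\omega^\star$ with $|\omega^\star| = t$, where $d = d(P_1,\ldots,P_s)$ depends on the degrees of the polynomials. The assumption that the $P_i$ are linearly independent and vanish at zero ensures that the PET scheme terminates at a non-degenerate linear configuration, making this step a quantitative but structurally routine Bergelson--Leibman descent.

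Second, the main step is iterated degree lowering to reduce $U^{d+1}$ control of $f_{\omega^\star}$ all the way down to $U^2$ control. Each step follows Peluse's scheme \cite{P1} adapted as in Section 3 of the paper: invoke the quantitative $U^{k+1}$ inverse theorem of Green--Tao--Ziegler to extract a nilsequence $F(g(n)\Gamma)$ correlating with $f_{\omega^\star}$; reinsert this nilsequence into the polynomial configuration; then use Cauchy--Schwarz together with Leibman's quantitative equidistribution theorem on nilmanifolds to force the top-degree horizontal character to be trivial, yielding a $U^k$ correlation. Iterating produces the desired $U^2$ bound. Once this is achieved for every $\omega$ of weight $t$, I would invoke a weak Koopman--von Neumann decomposition $f_{\omega} = f_{\omega}^\sharp + f_{\omega}^\flat$ with $\|f_\omega^\flat\|_{U^2}$ arbitrarily small and $f_\omega^\sharp$ a short sum of linear characters, exactly as in the reduction of \thref{count} to \thref{gowers}. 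The $f^\flat$ contributions are negligible on both sides by the Gowers uniformity; the linear-character contributions on the left can be evaluated by a Weyl-type equidistribution computation for the polynomials $\omega \cdot P(y)$, producing the corresponding linear expression in the parameters $h_1,\ldots,h_s$ on the right.

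The principal obstacle is verifying that the degree-lowering iteration actually terminates at $U^2$ for each such pattern, rather than at some higher norm. This amounts to the quantitative Gowers--Wolf true complexity conjecture for the multilinear forms $\{\omega \cdot P(y) : |\omega| = t\}$, and requires checking via the Gowers--Wolf polynomial-annihilation criterion that no multivariate polynomial relation forces a higher controlling norm; this is the content of the hypothesis that one is in the ``complexity one'' regime. A secondary, purely quantitative obstacle is the cascading loss of iterated logarithms coming from each invocation of the Green--Tao--Ziegler inverse theorem and Leibman's equidistribution theorem: careful bookkeeping, as in Section 3 of the paper, should show that the depth of the iterated logarithm tower stays $O(1)$ depending only on $s$, $t$ and $\max_i \deg P_i$, but no qualitatively better bound appears achievable without an improvement in the inverse theory beyond $U^3$.
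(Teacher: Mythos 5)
There is a genuine gap, and it starts with what the statement actually is: in the paper this is a \emph{conjecture}, and the only thing the paper proves about it is the conditional implication that \thref{conj2} implies \thref{conj1} (Section 4.2). Your proposal instead attempts an unconditional proof, and its first two steps (PET descent plus ``routine'' iterated degree lowering for the general pattern) are precisely the content of \thref{conj2}, which the paper deliberately leaves open. The degree-lowering machinery actually carried out in the paper is specific to the single complexity-one progression $(x, x+P(y), x+Q(y), x+P(y)+Q(y))$: it uses the $U^3$ inverse theorem, the self-duality of nilcharacters (\thref{dualus}), and a bespoke equidistribution lemma (\thref{equidistribution}) exploiting the parallelepiped identity for that configuration. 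Treating its extension to arbitrary $(s,t)$ as ``structurally routine'' assumes away the hard part.

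More seriously, your terminal step is wrong for $t \ge 3$. You claim the iteration should end at $U^2$ control, after which a Koopman--von Neumann decomposition into a short sum of linear characters plus a $U^2$-small error, evaluated by Weyl equidistribution, recovers the main term. But the main term here is $\mathbb{E}_{x,h_1,\dots,h_s}\prod_{|\omega|\le t} f_\omega(x+\omega\cdot h)$, a Host--Kra/Gowers-type average of degree $t-1$; it is not determined by the Fourier coefficients of the $f_\omega$ (take the $f_\omega$ to be quadratic phases for $t=3$ to see the mismatch), and correspondingly the pattern is not controlled by the $U^2$ norm alone. This is exactly why the paper's deduction abandons the Fourier transform for general $(s,t)$ and instead decomposes each weight-$t$ function into dual $U^t$ functions via \thref{regularity2}, then runs a double induction on $(s,t)$ in which the key move is the change of variables $h_i \mapsto h_i - P_{j_{\omega',i}}(y)$ inside the dual function, eliminating the argument $x+\omega'\cdot P(y)$ and reducing $(s,t)$ to $(s,t-1)$, with the uniform part handled by the assumed Gowers uniformity estimate and the linear count by \thref{cscomplexity} and \thref{poly}. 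Your proposal contains neither the dual-function decomposition nor this change-of-variables induction, and the linear-character substitute you offer cannot produce the right-hand side; so even granting your step one and two, the argument would not close.
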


\begin{conjecture}\thlabel{conj2}
Let $P_1, \dots, P_s \in \mathbb{Z}[x]$ be linearly independent polynomials that vanish at zero. Let $f_\omega: \mathbb{Z}/N\mathbb{Z} \to \mathbb{C}$ be 1-bounded functions. Let
$$\Lambda_{s, t}(f_\omega) = \mathbb{E}_{x, y} \prod_{\omega \in \{0, 1\}^s: |\omega| \le t} f_\omega(x + \omega \cdot P(y)).$$
Suppose
$$|\Lambda_{s, t}(f_\omega)| \ge \delta.$$
Then either
$$\delta \le 1/\log_{O(1)}(N)$$
or
$$\|f_\omega\| \ge 1/\exp_{O(1)}(\delta^{-1}).$$
\end{conjecture}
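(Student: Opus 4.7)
The plan is to lift the degree-lowering strategy that proves \thref{gowers} (the $s = t = 2$ case of \thref{conj2}) to the general setting, in three stages.

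First, starting from $|\Lambda_{s,t}(f_\omega)| \ge \delta$, fix a distinguished multi-index $\omega^\ast$ with $|\omega^\ast| = t$ (a ``top'' factor) and perform iterated van der Corput and Cauchy--Schwarz in the $y$ variable, arranged according to a PET induction adapted to the polynomial family $\{\omega \cdot P(y) : |\omega| \le t\}$. This should produce an initial bound $\|f_{\omega^\ast}\|_{U^K} \gg \delta^{O_P(1)}$, where $K$ depends on $\max_i \deg P_i$ and on $t$ and is in general much larger than the conjectured true complexity $s_0 = s_0(P_1, \dots, P_s, t)$ of the pattern.

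Second, apply the quantitative $U^K$ inverse theorem of Manners \cite{M} to extract a degree-$(K-1)$ nilsequence $\chi$ of complexity polynomial in $1/\delta$ that correlates with $f_{\omega^\ast}$. Substituting this correlation back into $\Lambda_{s,t}$ and reorganizing the product over $\omega$, the degree-lowering step exploits the polynomial identity that $\omega \cdot P(y + h) - \omega \cdot P(y)$ is of strictly lower degree in $y$ than $\omega \cdot P(y)$ itself. After a further differencing in an auxiliary variable $h$ and a Cauchy--Schwarz, one concludes, as in Peluse's argument and in the proof of \thref{gowers}, that $\chi$ is itself controlled by a nilsequence of lower degree, at the cost of a polynomial loss in $\delta$. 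Iterating this reduction $K - s_0$ times drives the Gowers norm index controlling $f_{\omega^\ast}$ down to $s_0$, giving $\|f_{\omega^\ast}\|_{U^{s_0}} \ge 1/\exp_{O(1)}(\delta^{-1})$. \thref{conj1} would then follow from \thref{conj2} via a weak arithmetic regularity decomposition, exactly as \thref{count} is deduced from \thref{gowers} in the present paper.

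The principal obstacle is the combinatorial and algebraic bookkeeping at the degree-lowering step. In the $s = t = 2$ case there is essentially a single top factor to handle, and one compares against a small, explicitly enumerable collection of lower-complexity averages; for general $s$ and $t$, the van der Corput generates cross-correlations indexed by the entire cube $\{\omega : |\omega| \le t\}$, and one must verify that every such cross term can be absorbed into an average involving polynomials of strictly lower complexity, without any cancellation issues in the nilsequence arithmetic. A secondary but equally essential difficulty is that the equidistribution theorem for polynomial sequences on nilmanifolds used at each step must have \emph{polynomial} dependence on the complexity parameters; otherwise the $O(K)$ iterations accumulate into a sub-tower loss that would destroy the target $1/\exp_{O(1)}(\delta^{-1})$ bound, and such polynomial equidistribution is at present only fully available in low-degree regimes.
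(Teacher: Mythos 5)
First, a point of context: \thref{conj2} is stated in the paper as a \emph{conjecture} and is not proved there; the paper only proves the $s=t=2$ instance (\thref{gowers}) and the implication \thref{conj2} $\Rightarrow$ \thref{conj1}. So your proposal cannot be measured against a proof in the paper, and it must stand on its own. As it stands it is a programme rather than a proof, and two of its load-bearing steps are not available. The first is the appeal to ``the quantitative $U^K$ inverse theorem of Manners \cite{M} with complexity polynomial in $1/\delta$'': the cited work of Manners is about true complexity via iterated Cauchy--Schwarz, not an inverse theorem, and no $U^K$ inverse theorem with polynomial complexity bounds is known for $K \ge 4$; the paper deliberately confines itself to the $U^3$ inverse theorem (\thref{u3}) for exactly this reason. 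The second is the description of the degree-lowering mechanism: lowering does not come from the observation that $\omega\cdot P(y+h)-\omega\cdot P(y)$ has lower degree in $y$. In the paper's argument (and in Peluse's) it comes from the dual--difference interchange (\thref{dual}), applying the inverse theorem to the differenced dual functions $\mathcal{D}_h$, proving an equidistribution statement for the resulting nilcharacter evaluated along the progression (\thref{equidistribution}) which forces the nilsequence to be abelian, Fourier-expanding, and then invoking the counting theorem for linearly independent polynomials to kill the top-degree phase. Generalizing \thref{conj2} beyond $s=t=2$ requires the step-$\ge 3$ analogues of \thref{dualus} and \thref{equidistribution} (nilcharacter self-duality and equidistribution along $(x+\omega\cdot P(y))$ on higher-step nilmanifolds, i.e.\ the bracket-polynomial arithmetic of \cite{GT5} in higher step), and your sketch does not supply these; indeed your closing paragraph concedes that the needed equidistribution input is unavailable, which is an admission that the argument is incomplete rather than a technicality.

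A smaller but substantive point: your stated ``secondary difficulty'' --- that the equidistribution theorem must have polynomial dependence on the complexity parameters or else the conclusion is destroyed --- is misdirected. The conclusion of \thref{conj2} is only $\|f_\omega\| \ge 1/\exp_{O(1)}(\delta^{-1})$, an iterated exponential, and the number of degree-lowering iterations is $O_P(1)$; so exponential (even doubly exponential) losses per iteration are tolerable, exactly as in the paper's own bound analysis, provided the losses depend only on $\delta$ and the complexity data and not on $N$. The genuine obstruction is not the shape of the bounds but the absence of the higher-step nilcharacter machinery (inverse theorem plus duality plus equidistribution along the polynomial pattern) that the two-step argument of \thref{gowers} relies on.
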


A corollary of \thref{conj1} is the following:
\begin{conjecture}\thlabel{conj3}
Let $P_1, \dots, P_s \in \mathbb{Z}[x]$ be linearly independent polynomials that vanish at zero. Suppose a subset $A \subseteq \mathbb{Z}/N\mathbb{Z}$ contains no progression $(x + \omega \cdot P(y))_{\omega \in \{0, 1\}^s: |\omega| \le t}$ with $y\ne 0$. Then
$$|A| \le O\left(\frac{N}{\log_{O(1)}(N)}\right).$$
\end{conjecture}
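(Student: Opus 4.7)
The plan is to derive \thref{conj3} directly from \thref{conj1} by specializing every $f_\omega$ to $1_A$ and comparing the polynomial side of that identity with its linear (Gowers-cube) side. Since $A$ contains no progression of the prescribed form with $y\ne 0$, the product $\prod_{|\omega|\le t} 1_A(x+\omega\cdot P(y))$ vanishes for every $y\ne 0$, so the polynomial average picks up only the fiber $y=0$ and equals $\delta/N = O(1/N)$, where $\delta=|A|/N$. (I read the polynomial side of \thref{conj1} with its product running over $|\omega|\le t$, matching the linear side and the setting of \thref{count}; the formula in the excerpt appears to contain a typographical asymmetry.)

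Next I would lower bound the linear side. Since each factor $1_A(\cdot)$ lies in $[0,1]$, adjoining the extra factors $1_A(x+\omega\cdot h)$ with $|\omega|>t$ can only shrink the integrand, so
\begin{equation*}
\mathbb{E}_{x,h_1,\dots,h_s}\prod_{\omega:\,|\omega|\le t} 1_A(x+\omega\cdot h) \;\ge\; \mathbb{E}_{x,h_1,\dots,h_s}\prod_{\omega\in\{0,1\}^s} 1_A(x+\omega\cdot h) \;=\; \|1_A\|_{U^s}^{2^s} \;\ge\; \delta^{2^s},
\end{equation*}
using the monotonicity $\|1_A\|_{U^s}\ge \|1_A\|_{U^1}=\delta$ of Gowers norms.

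Combining the previous two estimates through \thref{conj1} yields $\delta^{2^s} \ll 1/\log_{O(1)}(N)$. Taking $2^s$-th roots and using $\log_k(N)^{1/2^s}\gg \log_{k+1}(N)$ for $N$ large, the exponent $1/2^s$ is absorbed into one additional iterate of the logarithm, giving $\delta = O(1/\log_{O(1)}(N))$ and hence $|A| = O(N/\log_{O(1)}(N))$.

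The main obstacle is of course \thref{conj1} itself; the density deduction above is essentially mechanical once the quantitative polynomial-to-linear comparison is granted. I would expect \thref{conj1} to follow by extending the degree-lowering, Peluse-type approach used for \thref{count} and \thref{gowers}, combined with Kuca's true-complexity analysis \cite{K1, K2} to identify the correct $U^s$ norm controlling the polynomial average and to reduce the $t<s$ case to the full Gowers-cube count on the linear side.
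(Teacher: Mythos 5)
Your argument is correct and is essentially the paper's own proof: the paper likewise deduces \thref{conj3} from \thref{conj1} by setting $f_\omega = 1_A$, lower-bounding the linear side by $\|1_A\|_{U^s}^{2^s} \ge \|1_A\|_{U^1}^{2^s} = \delta^{2^s}$, and concluding $|A| \le O(N/\log_{O(1)}(N))$. You simply make explicit the steps the paper leaves implicit (the $y=0$ fiber giving the $O(1/N)$ upper bound on the polynomial average, the monotonicity justification for the $U^s$ lower bound, the reading of the $|\omega|\le t$ index set, and absorbing the $2^s$-th root into one more iterated logarithm), all of which are fine.
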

\begin{proof}[Proof assuming \thref{conj1}]
Letting $f_\omega = 1_A$, we obtain that
$$\mathbb{E}_{x, y} \prod_{\omega \in \{0, 1\}^s: |\omega| = t} f_\omega(x + \omega \cdot P) \ge \|1_A\|_{U^s}^{2^s} + O\left(\frac{1}{\log_{(O(1))}(N)}\right) \ge \|1_A\|_{U^1}^{2^s} + O\left(\frac{1}{\log_{(O(1))}(N)}\right)$$
which immediately implies that
$$|A| \le O\left(\frac{N}{\log_{O(1)}(N)}\right).$$
\end{proof}

We shall prove in section 4 that \thref{conj2} implies \thref{conj1}. It seems of interest to locate an argument similar to \cite{M} for polynomial progressions of true complexity at least 1, which would possibly yield a polynomial type bound on the Gowers uniformity estimate, proving \thref{conj2} or \thref{gowers} but with polynomial bounds, and thus getting polynomial bounds for the errors in \thref{conj1} and \thref{count}. \\\\
In addition to the discrete setting, the degree lowering argument has been adapted by \cite{Fr2} and \cite{BM} to the ergodic setting. It would be of interest to know whether the higher order analogue of degree lowering as in this paper also generalizes to the ergodic setting.

\subsection{Bounds}
Since we iteratively apply the $U^3$ inverse theorem, our bounds for the true complexity problem are in nature iterated exponential (which results in iterated logarithm bound for the Szemer\'edi type theorem). Each iteration of the degree lowering argument results in two additional iterated logarithms. As an example, if $P(y) = y$ and $Q(y) = y^2$, then the bound has roughly $10$ logarithms. If $P(y) = y^2$ and $Q(y) = y^3$, then the bound has roughly $64$ logarithms. Even if we had a polynomial or a quasi-polynomial type bound on the $U^3$ inverse theorem with a $\log(\delta^{-1})^{O(1)}$ type bound on the dimension of the nilmanifold, our bounds would still be likely to be iterated exponential because our equidistribution lemma (\thref{equidistribution}) is too inefficient: the number of times the van der Corput inequality is applied in Green and Tao's work \cite{GT2} is on the order of the dimension of the nilmanifold, the losses incurred have exponent exponential in the dimension of the nilmanifold. This issue could be solved if (in addition to the quasi-polynomial $U^3$ inverse theorem) there were an equidistribution theorem with losses which have exponent polynomial in the dimension of the nilmanifold. We hope to address this issue in future work.

\section{Notation}
In this document, we use the following conventions.
\begin{itemize}
\item $N$ is a very large prime.
\item If $A$ is a set and $f: A \to \mathbb{C}$, then $\mathbb{E}_{x \in A} f = \frac{1}{|A|} \sum_{x \in A} f(x)$. If $A$ is not specified, then the expectation is meant to be over $\mathbb{Z}/N\mathbb{Z}$.
\item We will often shorthand $\mathbb{E}_{h_1, h_2, \dots, h_s} = \mathbb{E}_h$.
\item $\widehat{\mathbb{Z}/N\mathbb{Z}}$ will consist of phases $\xi \in S^1 = \{z \in \mathbb{C}: |z| = 1\}$ with denominator $N$.
\item $\Lambda(f, g, k, p) = \mathbb{E}_{x, y} f(x)g(x + P(y))k(x + Q(y))p(x + P(y) + Q(y))$.
\item $\Lambda^1(f, g, k, p) = \mathbb{E}_{x, y, z} f(x)g(x + y)k(x + z) p(x + y + z)$.
\item The discrete Fourier transform is defined as $\hat{f}(\xi) = \mathbb{E}_{x} f(x)e^{2\pi i x \xi}$.
\item $\|\hat{f}\|_{\ell^1} = \sum_{\xi \in \widehat{\mathbb{Z}/N\mathbb{Z}}} |\hat{f}(\xi)|$. 
\item $e(x) = e^{2\pi i x}$.
\item We say that $f = O(g)$, $f \ll g$, $g \gg f$ if $f \le Cg$ for some constant $C$. We also write $f \sim g$ if $C_1f \le g \le C_2f$ for positive constants $C_1, C_2$.
\item $O(1)$ can and will depend on the Gowers uniformity parameter $s$ in $U^s$ (defined below) and the specific polynomial progression considered. It will not depend on anything else.
\item $\log_{(O(1))}$ is an iterated logarithm iterated $O(1)$ times. Likewise, $\exp_{O(1)}(x)$ is an exponential function iterated $O(1)$ times.
\item If $f: \mathbb{Z}/N\mathbb{Z} \to \mathbb{C}$ is a function and $\alpha \in \mathbb{C}$, then $\Delta_\alpha f(x) = \overline{f(x + \alpha}) f(x)$.
\item If $f: \mathbb{Z}/N\mathbb{Z} \to \mathbb{C}$ is a function, and $h = (h_1, \dots, h_s)$ a vector, we define
$$\Delta_h f = \Delta_{h_1}\Delta_{h_2} \cdots \Delta_{h_s} f$$
\item If $f: \mathbb{Z}/N\mathbb{Z} \to \mathbb{C}$ is a function, and $h \in \mathbb{Z}/N\mathbb{Z}$, we define $\partial_hf(x) = f(x + h) - f(x)$, and we similarly define for $\vec{h} = (h_1, \dots, h_s)$, $\partial_{\vec{h}}f = \partial_{h_1}\partial_{h_2} \cdots \partial_{h_s} f$.
\item If $f: \mathbb{Z}/N\mathbb{C} \to \mathbb{C}$ is a function, we define
$$\|f\|_{L^p} = \left(\mathbb{E}_{x \in \mathbb{Z}/N\mathbb{Z}} |f(x)|^p\right)^{1/p}$$
$$\|f\|_{\ell^p} = \left(\sum_{x \in \mathbb{Z}/N\mathbb{Z}} |f(x)|^p\right)^{1/p}.$$
\item We define $C^kf$ as a conjugating operator applied $k$ times, with $Cf = \hat{f}$, $C^2 = C \circ C$, $C^k = C \circ C^{k - 1}$.
\item We use the convention $\{0, 1\}^s_* = \{0, 1\}^s \setminus 0^s$ where $0^s = 00000 \cdots 0$ with there being $s$ zeros appearing in the string.
\item We define the inner product on functions $f, g \colon \mathbb{Z}/N\mathbb{Z} \to \mathbb{C}$ as $\langle f, g \rangle = \mathbb{E}_{n \in \mathbb{Z}/N\mathbb{Z}} f(n)\overline{g(n)}$ and $f, g \colon \widehat{\mathbb{Z}/N\mathbb{Z}} \to \mathbb{C}$ as $\langle f, g \rangle = \sum_{\xi \in \widehat{\mathbb{Z}/N\mathbb{Z}}} f(\xi)\overline{g(\xi)}$.
\end{itemize}
For $f: \mathbb{Z}/N\mathbb{Z} \to \mathbb{C}$, the \textbf{Gowers} $U^s$ \textbf{norm} is the following quantity
$$\|f\|_{U^s}^{2^s} = \mathbb{E}_{x, h_1, h_2, \dots, h_s} \prod_{\omega \in \{0, 1\}^s} C^{|\omega|} f(x + \omega \cdot h) = \mathbb{E}_{x \in \mathbb{Z}/N\mathbb{Z}, h \in \mathbb{Z}/N\mathbb{Z}^s} \Delta_h f(x).$$
The Gowers norm was first defined in \cite{G2}, where it was also proved to be a norm and have the following Cauchy-Schwarz like identity:
\begin{lemma}[Cauchy-Schwarz-Gowers Inequality]\thlabel{cauchyschwarzgowers}
Let $(f_\omega)_{\omega \in \{0, 1\}^s}$ be a set of functions defined on $\mathbb{Z}/N\mathbb{Z}$, then
$$\mathbb{E}_{x, h_1, \dots, h_s} \prod_{s \in \{0, 1\}^s} C^{|\omega|} f_\omega(x + \omega \cdot h) \le \prod_{\omega \in \{0, 1\}^s} \|f_\omega\|_{U^s}^{2^s}.$$
\end{lemma}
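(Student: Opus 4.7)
The plan is to prove the inequality (equivalently, its $2^s$-th power) by iterated Cauchy--Schwarz, one application per direction $h_1, \dots, h_s$. At each step I split the product over $\omega \in \{0,1\}^s$ according to one bit of $\omega$, factor an expectation using independence, apply Cauchy--Schwarz, and expand the squared magnitudes by introducing a ``reflected'' variable; this halves the effective dependence of the surviving factors on the family $(f_\omega)_\omega$. After $s$ iterations the resulting binary tree of bounds has $2^s$ leaves, each of which is exactly $\|f_\omega\|_{U^s}^{2^s}$ for a distinct $\omega$.

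Concretely, for the first step I would write
$$\prod_{\omega \in \{0,1\}^s} C^{|\omega|} f_\omega(x + \omega \cdot h) = A(x, h'')\, B(x + h_1, h''),$$
where $h'' = (h_2, \dots, h_s)$ and $A, B$ gather the factors with $\omega_1 = 0$ and $\omega_1 = 1$ respectively. Averaging first over the independent uniform variable $h_1$ and then over $x$ factorises this as $(\mathbb{E}_x A)(\mathbb{E}_y B)$, so the left-hand side equals $\mathbb{E}_{h''}(\mathbb{E}_x A)(\mathbb{E}_y B)$. Cauchy--Schwarz in $h''$ then gives
$$\left|\mathbb{E}_{x,h}\prod_\omega C^{|\omega|} f_\omega(x+\omega\cdot h)\right|^2 \;\le\; \bigl(\mathbb{E}_{h''}|\mathbb{E}_x A|^2\bigr) \bigl(\mathbb{E}_{h''}|\mathbb{E}_y B|^2\bigr).$$
Expanding $|\mathbb{E}_x A|^2 = \mathbb{E}_{x, x'}A(x)\overline{A(x')}$ and substituting $x' = x + h_1'$, together with the identity $\overline{C^k f} = C^{k+1} f$, recasts the first factor as a Gowers-type multilinear form over $\{0,1\}^s$ with $h$-variables $(h_1', h_2, \dots, h_s)$ whose functions $g_{(\omega_1, \omega'')} = f_{(0, \omega'')}$ no longer depend on the first coordinate of the index. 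The analogous statement holds for the $B$ factor with $f_{(1, \omega'')}$.

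Iterating this procedure in the directions $h_2, h_3, \dots, h_s$ halves the effective index dependence at each stage: after round $j$ each branch is a Gowers-type form whose functions depend only on the last $s - j$ coordinates of $\omega$. After $s$ rounds each of the $2^s$ leaves is literally the defining expression for $\|f_\omega\|_{U^s}^{2^s}$ for a distinct $\omega$. Multiplying all the resulting Cauchy--Schwarz bounds yields the inequality raised to the $2^s$-th power, and taking $2^s$-th roots concludes the proof.

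The main delicacy is purely combinatorial: one must keep track of the binary tree of Cauchy--Schwarz applications, confirming that the reflections $x' = x + h_i'$ and the parity-counting of conjugations $C^{|\omega|}$ conspire to produce a distinct $\|f_\omega\|_{U^s}^{2^s}$ at each of the $2^s$ leaves. There is no genuine conceptual obstacle beyond getting the index bookkeeping right.
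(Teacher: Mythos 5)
Your argument is correct and is exactly the standard iterated Cauchy--Schwarz proof from Gowers' paper, which this lemma simply cites rather than reproves: split on one bit of $\omega$, factor using the independence of $x$ and $x+h_1$, apply Cauchy--Schwarz in the remaining $h$-variables, expand the square with a reflected variable, and iterate through all $s$ directions so that each of the $2^s$ leaves collapses to $\|f_\omega\|_{U^s}^{2^s}$ for a distinct $\omega$. One remark: what you actually prove is $\bigl|\mathbb{E}_{x,h}\prod_\omega C^{|\omega|}f_\omega(x+\omega\cdot h)\bigr|^{2^s} \le \prod_\omega \|f_\omega\|_{U^s}^{2^s}$, i.e.\ the form is bounded by $\prod_\omega \|f_\omega\|_{U^s}$; the exponent placement in the lemma as stated (with $2^s$ on the right but not on the left) is a typo, and your version is the correct one.
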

If $s = 2$, it turns out that
$$\|f\|_{U^2} = \|\hat{f}\|_{\ell^4}$$
so by Plancherel and H\"older's inequality, we have
\begin{theorem}[$U^2$ Inverse Theorem]\thlabel{u2}
We have
$$\|f\|_{U^2}^4 \le \|f\|_{L^2}^2 \|\hat{f}\|_{L^\infty}^2.$$ 
\end{theorem}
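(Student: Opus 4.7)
The strategy, as suggested by the remark immediately preceding the statement, is to identify the fourth power of the $U^2$ norm with $\|\hat{f}\|_{\ell^4}^4$ on the Fourier side, and then interpolate via H\"older between $\ell^2$ (controlled by Plancherel) and $\ell^\infty$. There is no real obstacle here; the only point requiring care is keeping track of the normalization of the Fourier transform used in the paper, namely $\hat{f}(\xi) = \mathbb{E}_x f(x) e^{2\pi i x \xi}$ with $\xi \in \widehat{\mathbb{Z}/N\mathbb{Z}}$.

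First I would expand the $U^2$ norm directly from the definition:
$$\|f\|_{U^2}^4 = \mathbb{E}_{x,h_1,h_2}\, f(x)\overline{f(x+h_1)}\,\overline{f(x+h_2)}\, f(x+h_1+h_2),$$
and substitute the Fourier inversion formula $f(y) = \sum_{\xi} \hat{f}(\xi) e^{-2\pi i y \xi}$ into each of the four factors. Collecting the exponential in $x$ gives the phase $e^{-2\pi i x(\xi_0 - \xi_1 - \xi_2 + \xi_3)}$, and the phases in $h_1, h_2$ are $e^{2\pi i h_1(\xi_1-\xi_3)}$ and $e^{2\pi i h_2(\xi_2-\xi_3)}$. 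Taking the expectations in $x, h_1, h_2$ and invoking orthogonality of characters on $\mathbb{Z}/N\mathbb{Z}$ forces $\xi_1 = \xi_3$, $\xi_2 = \xi_3$, and $\xi_0 = \xi_1 + \xi_2 - \xi_3$, so all four Fourier variables must coincide. This collapses the quadruple sum to the diagonal and yields the identity $\|f\|_{U^2}^4 = \sum_{\xi} |\hat{f}(\xi)|^4 = \|\hat{f}\|_{\ell^4}^4$.

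With this identity in hand, I would conclude in two short steps. By H\"older's inequality applied to the counting measure on $\widehat{\mathbb{Z}/N\mathbb{Z}}$,
$$\sum_{\xi} |\hat{f}(\xi)|^4 \le \Bigl(\sup_{\xi}|\hat{f}(\xi)|^2\Bigr)\sum_{\xi}|\hat{f}(\xi)|^2 = \|\hat{f}\|_{L^\infty}^2\,\|\hat{f}\|_{\ell^2}^2.$$
Finally, Plancherel's identity in the chosen normalization gives $\|\hat{f}\|_{\ell^2}^2 = \mathbb{E}_x |f(x)|^2 = \|f\|_{L^2}^2$. Chaining the three identities/inequalities produces exactly $\|f\|_{U^2}^4 \le \|f\|_{L^2}^2\,\|\hat{f}\|_{L^\infty}^2$, which is the desired bound.
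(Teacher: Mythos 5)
Your proof is correct and follows the same route the paper sketches: establish the identity $\|f\|_{U^2} = \|\hat f\|_{\ell^4}$, then apply H\"older and Plancherel. The paper cites this chain without spelling out the orthogonality computation, and your version simply supplies those details consistently with the normalization $\hat f(\xi)=\mathbb{E}_x f(x)e^{2\pi i x\xi}$ (for which $\|\hat f\|_{\ell^2}=\|f\|_{L^2}$).
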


We will now go over some notation regarding nilmanifolds. This notation is fairly standard, and appears in, for instance \cite{GT2} and \cite{TT}. Let $G_\bullet$ be a filtered nilpotent Lie group and $\Gamma$ a discrete co-compact subgroup. The Lie algebra $\mathfrak{g}$ of the nilpotent Lie group is adapted with a \textit{Mal'cev basis}, $X_1, \dots, X_d$, that is a basis of the Lie algebra such that each element of $G$ can be written uniquely as
$$\exp(t_1X_1)\exp(t_2X_2) \cdots \exp(t_dX_d)$$
with $\Gamma$ are the elements with $t_i \in \mathbb{Z}$. We will assume that the Mal'cev basis satisfies a \textit{nesting property}: that is,
$$[\mathfrak{g}, X_i] \subseteq \text{span}(X_{i + 1}, \dots, X_m).$$
Suppose
$$[X_i, X_j] = \sum_{k} c_{ijk} X_k$$
with $c_{ijk}$ rational. The \emph{height} of a rational number $q = \frac{a}{b}$ in reduced form is $\max(|a|, |b|)$. The \textit{complexity} of the nilmanifold is the minimum height of each $c_{ijk}$. A \textit{polynomial sequence} is a sequence $g\colon \mathbb{Z} \to G$ of the form
$$g(n) = g_0 g_1^n g_2^{{n \choose 2}} g_3^{{n \choose 3}} \cdots g_s^{{n \choose s}}$$
where $g_i \in G_i$, and a \textit{nilsequence} is a sequence of the form $F(g(n)\Gamma)$ where $F\colon G/\Gamma \to \mathbb{C}$ is a Lipschitz function and $g$ is a polynomial sequence. A treatment of polynomial sequences can be found in, e.g., \cite{T1}. Factoring $g(0) = \{g(0)\}[g(0)]$ where $[g(0)] \in \Gamma$ and $\{g(0)\}$ is bounded, and observing that 
$$F(g(n)\Gamma) = F(\{g(0)\} \cdot \{g(0)\}^{-1} g(n)[g(0)]^{-1}\Gamma)$$
it follows that we may replace $F(g(n)\Gamma)$ with $\tilde{F}(\tilde{g}(n)\Gamma)$ such that
$$\tilde{F} = F(\{g(0)\} \cdot)$$
$$\tilde{g}(n) = \{g(0)\}^{-1}g(n)[g(0)]^{-1}.$$
Thus, we may assume that $g(0) = \mathrm{id}_G$. Note that $\{g(0)\}$ being bounded ensures that the Lipschitz norm of $\tilde{F}$ does not blow up. The \textit{degree} $s$ of the polynomial sequence is the maximal $s$ for which $G_s$ is nonzero. If $F: G/\Gamma \to \mathbb{C}$ is a Lipschitz function, we may Fourier expand $F$ along the \textit{vertical torus} $G_s/(G_s \cap \Gamma)$ to obtain
$$F(x) = \sum_\xi F_\xi, \hspace{0.1in} F_\xi = \int_{G_s/(\Gamma \cap G_s)} \overline{\xi}(g_s)F(g_sx \Gamma) dg_s$$
where $\xi: G_s/(G_s \cap \Gamma) \to \mathbb{C}^*$ are the characters. These $F_\xi$ are known as \textit{nilcharacters of frequency} $\xi$ and satisfy $F_\xi(g_s x) = \xi(g_s)F_\xi(x)$ for $g_s \in G_s$. Nilcharacters are not unique in general, but are unique up to ``lower order factors.'' A formal treatment of these nilcharacters is found in \cite{GTZ2}. \\\\
Nilmanifolds also have \textit{horizontal characters}, that is, continuous homomorphisms $\eta\colon G \to \mathbb{R}$ with $\eta(\Gamma) \subseteq \mathbb{Z}$. The nilmanifold $G/[G, G]\Gamma$ is known as the \emph{horizontal torus}. Since $\eta(\Gamma) \subseteq \mathbb{Z}$, and each element of $\Gamma$ can be represented in Mal'cev coordinates by a vector $(t_1, \dots, t_d) \in \mathbb{Z}^d$, each $\eta$ can be represented by a vector $k \in \mathbb{Z}^d$. We refer to the \emph{size} of $\eta$, denoted by $|\eta|$ as the $\ell^\infty$ norm of $k$. Similarly, one can represent continuous homomorphisms $\eta \colon G_s/(\Gamma \cap G_s) \to \mathbb{R}$ as a vector $k'$ in $\mathbb{R}^{\mathrm{dim}(G_s)}$; the \emph{size} of $\eta$, denoted as $|\eta|$ will be the $\ell^\infty$ norm of $k'$. Each frequency $\xi: G_s/(G_s \cap \Gamma) \to \mathbb{C}^*$ can be represented by such an $\eta$ and the \emph{size} of $\xi$, denoted by $|\xi|$ is $|\eta|$. \\\\
The $U^3$ norm is not known to have quite as nice an interpretation as the $U^2$ norm, but we also have a ``$U^3$ inverse theorem,'' a statement of which found in \thref{u3}. A helpful example of a nilmanifold that the reader can consider are the various \textit{Heisenberg groups}.

\section{An Overview of the Proof}
\subsection{\thref{gowers} implies \thref{count}}
One way to estimate
$$\Lambda(f, g, k, p) = \mathbb{E}_{x, y} f(x)g(x + P(y))k(x + Q(y))p(x + P(y) + Q(y))$$
would be to Fourier expand
$$p(x) = \sum_\xi \hat{p}(\xi) e(\xi x)$$
so that
$$\Lambda(f, g, k, p) = \mathbb{E}_{x, y}\sum_\xi \hat{p}(\xi)f(x)e(-\xi x) g(x + P(y))e(\xi(x + P(y))) k(x + Q(y)) e(\xi(x + Q(y))).$$
By the previous work on linearly independent polynomial progressions \cite{BC}, \cite{DLS}, \cite{P1}, we have the following:
\begin{theorem}\thlabel{poly}
Let $P_1(y), P_2(y), \dots, P_n(y) \in \mathbb{Z}[x]$ be linearly independent polynomials with $P_i(0) = 0$ and let $f_i: \mathbb{Z}/N\mathbb{Z} \to \mathbb{C}$ be 1-bounded functions. Then there exists $\gamma > 0$ depending only on $P_i, \dots, P_n$ such that
$$\mathbb{E}_{x, y} \prod_{i = 1}^n f_i(x + P_i(y)) = \prod_{i = 1}^n \mathbb{E}[f_i] + O(N^{-\gamma}).$$
\end{theorem}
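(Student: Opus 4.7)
The plan is to follow Peluse's strategy from \cite{P1}, reducing to a control statement for a single mean-zero function and then driving the control down to $U^1$. The first reduction is purely multilinear: write $f_i = c_i + g_i$ where $c_i = \mathbb{E}[f_i]$ and $g_i$ has mean zero, and expand the product into $2^n$ summands. The main term yields exactly $\prod_i \mathbb{E}[f_i]$, and the remaining $2^n - 1$ summands each contain at least one mean-zero factor $g_j(x + P_j(y))$. Thus it suffices to prove that whenever at least one $f_j$ has mean zero,
$$\left|\mathbb{E}_{x,y} \prod_{i=1}^n f_i(x + P_i(y))\right| \ll N^{-\gamma}.$$

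The second step is a generalized von Neumann inequality obtained by PET induction. Starting from the identity permutation of the polynomials, one applies Cauchy--Schwarz and van der Corput shifts, decreasing some suitable ``weight vector'' associated to the system at each stage, until all but $f_j$ are replaced by a box multilinear average. This produces an estimate of the form
$$\left|\mathbb{E}_{x,y} \prod_i f_i(x + P_i(y))\right|^{2^s} \ll \|f_j\|_{U^s}^{2^s} + O(N^{-\gamma})$$
for some $s$ depending only on $(P_1, \dots, P_n)$. Here linear independence of the $P_i$ is crucial: it ensures the PET-scheme terminates in a nondegenerate linear configuration so that the final Cauchy--Schwarz step genuinely outputs a Gowers norm rather than a degenerate average.

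The third and main step is \emph{degree lowering}, the heart of Peluse's argument. Assuming $\|f_j\|_{U^s} \ge \epsilon$ and the nontriviality of the progression count, one invokes the $U^{s-1}$ inverse theorem on the derivatives $\Delta_h f_j$ to obtain a ``dual function'' factorization: for many $h$, $\Delta_h f_j$ correlates with a polynomial phase of degree $s-2$ (or a nilsequence in higher-complexity settings). One then exploits the polynomial structure of the progression together with equidistribution of $P_i(y)$ on $\mathbb{Z}/N\mathbb{Z}$ to show these phases depend linearly on $h$ up to small error, which (via an inverse-Gowers-type manipulation) upgrades the initial $U^s$ lower bound on $f_j$ to a $U^{s-1}$ lower bound. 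Iterating eventually yields $\|f_j\|_{U^2} \gg_\epsilon 1$, and the same mechanism, now using the trivial $U^1 = $ mean characterization, yields $|\mathbb{E}[f_j]| \gg_\epsilon 1$, contradicting the assumption that $f_j$ has mean zero (once $\epsilon \gg N^{-\gamma}$ for the right $\gamma$).

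The step I expect to be the main obstacle is this degree lowering: carrying out the inverse-theorem-plus-equidistribution bookkeeping with polynomial-in-$\epsilon$ losses requires Weyl-type equidistribution bounds for polynomial phases $e(\alpha P_i(y))$ on $\mathbb{Z}/N\mathbb{Z}$ that are robust under the nonstandard averaging coming from Cauchy--Schwarz, and one must verify that the change of variables induced by the $P_i$ does not collapse the degrees. For the final $U^2 \to U^1$ passage in particular, one extracts a large Fourier coefficient $\hat f_j(\xi)$, absorbs the phase $e(-\xi x)$ into $f_j$, and shows that the residual phase $e(-\xi P_j(y))$ is equidistributed on $\mathbb{Z}/N\mathbb{Z}$ with quantitative Weyl bounds (using that $N$ is prime, so that $\xi \ne 0$ implies nondegeneracy of the polynomial phase). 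This polynomial Weyl step, combined with the iterated inverse theorem losses, is what produces the exponent $\gamma > 0$ depending on $(P_1, \dots, P_n)$.
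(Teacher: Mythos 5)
The paper does not supply a proof of \thref{poly}; it imports it directly from \cite{BC}, \cite{DLS}, \cite{P1}, so what you are being compared against is Peluse's actual argument. Your first two steps are accurate: the decomposition $f_i = \mathbb{E}[f_i] + g_i$ to reduce to the case where some $f_j$ is mean-zero, and the PET induction to extract initial control by $\|f_j\|_{U^s}$ for some $s$ depending only on the system, are both correct and match the cited strategy.

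The degree-lowering step, however, is described in a way that would not work and does not match Peluse's mechanism. You propose to iterate by applying the $U^{s-1}$ inverse theorem to $\Delta_h f_j$ and then somehow upgrading to $\|f_j\|_{U^{s-1}}$ large. But for a general one-bounded function, $\|f_j\|_{U^s}$ being large says nothing about $\|f_j\|_{U^{s-1}}$ (take $f_j$ to be a degree-$(s-1)$ phase), so no amount of equidistribution in $y$ can achieve this: the degree lowering must be applied not to $f_j$ itself but to the dual function $\mathcal{D}(x) = \mathbb{E}_y \prod_{i\neq j} f_i(x + P_i(y) - P_j(y))$, which is reached from $\Lambda$ by a single Cauchy--Schwarz (``stashing''), and whose special structure as an average over $y$ is precisely what makes the degree drop possible. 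Moreover Peluse's degree-lowering does not invoke any inverse theorem beyond $U^2$: the dual-difference interchange (as in \thref{dual}) reduces $\|\mathcal{D}\|_{U^{s+1}}$ to an average of $\|\mathcal{D}_h\|_{U^2}$, and each of these is handled purely by Fourier analysis plus Weyl sums for the linearly independent polynomials. This is the whole reason the method produces the polynomial exponent $\gamma$; if you genuinely needed the $U^{s-1}$ inverse theorem at each stage, the resulting quantitative loss would not be a power of $N$. Your final remark that the ``iterated inverse theorem losses'' still yield $\gamma > 0$ is inconsistent with the use of $U^{s-1}$ inverse theorems, which is further evidence that the mechanism as written does not hang together. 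To repair the proposal: insert the Cauchy--Schwarz step that passes from $f_j$ to $\mathcal{D}$, run PET on $\mathcal{D}$, and replace ``apply the $U^{s-1}$ inverse theorem to $\Delta_h f_j$'' with ``apply dual-difference interchange to $\mathcal{D}$, then Fourier analysis on $\mathcal{D}_h$, then Weyl sums to kill the linear phase.''
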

Thus, applying \thref{poly} with $(P_0, P_1, P_2) = (0, P, Q)$ and $f_1(x) = e(-\xi x)f(x)$, $f_2 = e(\xi x) g(x)$, and $f_3(x) = e(\xi x)h(x)$, we obtain
$$\Lambda(f, g, k, p) = \sum_\xi \hat{p}(\xi)\mathbb{E}_{x} f(x)e(-\xi x) \mathbb{E}_y g(y)e(\xi y) \mathbb{E}_z h(z)e(\xi z) + O(\|\hat{p}\|_{\ell^1}N^{-\gamma})$$
$$= \mathbb{E}_{x, y, z} f(x)g(x + y)k(x + z)p(x + y + z) + O(\|\hat{p}\|_{\ell^1} N^{-\gamma}).$$
As $p(x)$ can equal $e(\alpha x^2)$, the $\ell^1$ norm of $\hat{p}$ can be quite large; larger than $N^{\gamma}$ and in fact as large as $N^{1/2}$. However, we may use \thref{regularity} applied to the $L^\infty$ norm of the Fourier transform to decompose $p = p_a + p_b + p_c$ with $\|\widehat{p_a}\|_{\ell^1} \ll N^{\gamma_1}$ with $\gamma_1 < \gamma$, $p_b$ having small $L^1$ norm, and $\|\widehat{p_c}\|_{L^\infty}$ small while $\|p_c\|_{L^\infty}$ can be controlled so that the contribution of $\Lambda(f, g, k, p_c)$ can be estimated to be small. The necessary estimate on $\Lambda(f, g, k, p_c)$ would follow from \thref{gowers}. \\\\
Thus, we may write
$$\Lambda(f, g, k, p) \approx \Lambda^1(f, g, k, p_a).$$
We will also be able to show that
$$\Lambda^1(f, g, k, p) \approx \Lambda^1(f, g, k, p_a).$$
Putting everything together, it follows that
$$\Lambda(f, g, k, p) \approx \Lambda^1(f, g, k, p).$$
Thus, it suffices to prove \thref{gowers}.
\subsection{A Sketch of \thref{gowers}}
We prove \thref{gowers} via the following strategy: we first prove an initial Gowers uniformity estimate on $p$ using PET induction to bound $|\Lambda(f, g, k, p)|$ by $\|p\|_{U^s}$ for some $s$. We now use the \textit{degree lowering} procedure to lower the parameter $s$ one by one until it becomes two. To do so, we use the \emph{stashing} argument and the dual-difference interchanging lemma (\thref{dual}) to replace $p$ with a degree two nilsequence. We can then using the equidistribution theory of nilsequences (\thref{equidistribution}) to show that $p$ is actually a degree one nilsequence, from which we can deduce a Gowers uniformity estimate of degree $s - 1$. To prove the initial Gowers uniformity estimate, we use the PET induction procedure of Bergelson and Leibman \cite{BL}. \\\\
It turns out to be useful to consider the following model problem: showing control of $\Lambda(f, g, k, p)$ by the $U^2$ norm of $f, g, k$ if $p$ is some genuine quadratic phase $p(x) = e(\alpha x^2 + \beta x)$ with $\alpha, \beta$ both having denominator $N$. Noticing that
$$(x + P(y) + Q(y))^2 = (x + P(y))^2 + (x + Q(y))^2 - x^2 + 2P(y)Q(y)$$
$$x + P(y) + Q(y) = (x + P(y)) + (x + Q(y)) - x$$
we obtain
\begin{equation}
\Lambda(f, g, k, p) = \mathbb{E}_{x, y} \tilde{f}(x)\tilde{g}(x + P(y))\tilde{k}(x + Q(y))e(2\alpha P(y)Q(y)) \tag{1}
\end{equation}
where
$$\tilde{f}(x) = f(x) e(-\alpha x^2 - \beta x), \hspace{0.1in} \tilde{g}(x) = g(x)e(\alpha x^2 + \beta x), \hspace{0.1in} \tilde{k}(x) = e(\alpha x^2 + \beta x) k(x).$$
By observing that $\alpha P(y)Q(y) = \alpha(x + P(y)Q(y)) - \alpha x$, applying \thref{poly} to the progression $(x, x + P(y), x + Q(y), x + 2P(y)Q(y))$, we may bound, up to some small error, $\Lambda(f, g, k, p)$ by the $U^1$ norm of $e(\alpha x)$, which is zero if $\alpha$ is nonzero. This implies that if $\Lambda(f, g, k, p)$ is larger than the small error, then $\alpha$ must be zero. Thus, applying \thref{poly} again, we may bound (1) by
$$|\mathbb{E} \tilde{f}(x)| |\mathbb{E} \tilde{g}(x)| | \mathbb{E} \tilde{k}(x)| \le \|f\|_{U^2} \|g\|_{U^2}\|k\|_{U^2}.$$
In a slightly more general case, we take $p$ to be bracket polynomials of the form $B(n, m) = \sum_i a_i \{ \alpha_i n \} \{\beta_i m\}$ where $\{\cdot \} = x - [x]$ where $[x]$ is the nearest integer to $x$ where we round up (e.g. $[1.5] = 2$). Notice that
$$B(n + h, m) \equiv B(n, m) + B(h, m) + R(h, m, n)$$
where $R(h, m, n)$ contains terms that look like $\{\alpha n\}\{\beta m\}, \{\alpha(n + h)\}\{\beta m\}, or \{\alpha h\}\{\beta m\}$. These terms are degree one nilsequences and can be Fourier expanded into terms that look like, for instance, $e(a\alpha n + b\beta m)$. Although it's not true that
$$B(x + P(y) + Q(y), x + P(y) + Q(y)) \equiv B(x + P(y), x + P(y)) + B(x + Q(y), x + Q(y))$$ 
$$- B(x, x) + B(P(y), Q(y)) + B(Q(y), P(y)) \pmod{1}$$
the remaining terms are degree one bracketed phases, and can be Fourier expanded. The key difference between the simplified case of taking our phase to be a globally quadratic phase and the bracket polynomial phase is the analysis of the equidistribution of $B(P(y), Q(y)) + B(Q(y), P(y))$. We essentially show in \thref{equidistribution} that if $B(P(y), Q(y)) + B(Q(y), P(y))$ fails to equidistribute, which will end up being the case, then the phase $B(y, y)$ is essentially a degree two nilsequence on a degree one nilmanifold. The phase $e(B(y, y))$ can thus be Fourier expanded and we are essentially in the globally quadratic case. \\\\
Doing the above for bracket polynomials is, modulo some technical details, enough to establish the main theorem, but in an effort to make the argument cleaner and more generalizable, we've written the argument in terms of nilsequences. In this case, ``bracket polynomials'' become ``nilcharacters.'' We may exploit the ``parallelepiped structure'' of nilmanifolds to write a nilsequence $F(g(n)\Gamma)$ as a linear combination of ``dual nilsequences'':
$$F(g(n)\Gamma) = \sum_\alpha \prod_{\omega \in \{0, 1\}^{3}_*} C^{|\omega| - 1}F_{\omega, \alpha}(g(n + h \cdot \omega) \Gamma) + O(\epsilon).$$
Here, we note that this approximation is uniform in choices of $(h^\omega)_{\omega \in \{0, 1\}^3_*}$. In fact, an averaging argument shows that if $F$ is a nilcharacter of some frequency $\xi$, then all the $F_{\omega, \alpha}$ can be taken to be nilcharacters of the same frequency, so in a sense, nilcharacters are self dual. See \thref{dualus} for more details. Substituting in $n = x + P(y) + Q(y)$, $h_1 = -P(y)$, $h_2 = -Q(y)$, $h_3 = -x$, we've effectively realized the identity
$$B(x + P(y) + Q(y), x + P(y) + Q(y)) \approx B(x + P(y), x + P(y)) + B(x + Q(y), x + Q(y))$$ 
$$- B(x, x) + B(P(y) + Q(y), P(y) + Q(y)) - B(P(y), P(y)) - B(Q(y), Q(y)) \pmod{1}$$
in the abstract nilmanifold setting. We may then argue as in the bracket polynomial setting. \\\\
One more remark about the proof: a common technique used throughout the proof is \textit{stashing}, as coined by Freddie Manners in \cite{M}. It informally states that for any sort of quantitative count
$$\Lambda(f_1, \dots, f_t) = \mathbb{E}_{x_1, \dots, x_u} f_1(\phi_1(x)) f_2(\phi_2(x)) \cdots f_t(\phi_t(x))$$
if we can show that $|\Lambda(f_1, \dots, f_t)| \le \|f_1\|_{U^s}$ for any 1-bounded $f_i$, then defining a ``dual function''
$$\mathcal{D}(f_2, \dots, f_t) = \mathbb{E}_{v = \phi_1^{-1}(x_1, \dots, x_u)} f_2(\phi_2(x)) \cdots f_t(\phi_t(x))$$
then
$$|\Lambda(f_1, \dots, f_t)| \le \|\mathcal{D}(f_2, \dots, f_t)\|_{U^s}^{1/2}.$$
Usually this would follow from a simple application of Cauchy-Schwarz. For instance, if
$$\Lambda(f_1, f_2, f_3) = \mathbb{E}_{x, y} f_1(x)f_2(x + y)f_3(x + y^2)$$
then if we define
$$\mathcal{D}(f_1, f_2) = \mathbb{E}_y f_1(x - y^2)f_2(x + y - y^2)$$
it follows that
$$|\Lambda(f_1, f_2, f_3)|^2 \le \mathbb{E}_{x} \mathcal{D}(f_1, f_2) \overline{\mathcal{D}(f_1, f_2)} = \Lambda(\overline{f_1}, \overline{f_2}, \mathcal{D}(f_1, f_2)).$$
Thus, a slogan the reader can keep in mind is that proving Gowers uniformity for a function $f$ in a count $\Lambda$ implies proving Gowers uniformity for its dual function.
\section{From True Complexity to Szemer\'edi's Theorem}
\subsection{Deducing \thref{count} from \thref{gowers}}
We begin this section with the following:
\begin{lemma}\thlabel{reduce}
Let $f, g, k\colon \mathbb{Z}/N\mathbb{Z} \to \mathbb{C}$ be 1-bounded and $p\colon \mathbb{Z}/N\mathbb{Z} \to \mathbb{C}$. We have
$$\Lambda(f, g, k, p) = \Lambda^1(f, g, k, p) + O(\|\hat{p}\|_{\ell^1} N^{-\gamma})$$
for some $\gamma > 0$.
\end{lemma}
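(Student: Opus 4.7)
The plan is to Fourier expand $p$ and reduce both $\Lambda(f,g,k,p)$ and $\Lambda^1(f,g,k,p)$ to the same explicit sum over frequencies of Fourier coefficients of $f, g, k, p$, with the discrepancy controlled by \thref{poly}.

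First I would use the inversion formula $p(w) = \sum_\xi \hat p(\xi)\, e(-\xi w)$ (with $\xi$ ranging over $\widehat{\mathbb{Z}/N\mathbb{Z}}$) and substitute $w = x+P(y)+Q(y)$ into $\Lambda(f,g,k,p)$. The key algebraic step is the factorization
\[
e(-\xi(x+P(y)+Q(y))) = e(\xi x)\,e(-\xi(x+P(y)))\,e(-\xi(x+Q(y))),
\]
which lets me absorb the linear phases into $f, g, k$. Defining the one-bounded modulated functions $\tilde f_\xi(x):=f(x)\,e(\xi x)$, $\tilde g_\xi(x):=g(x)\,e(-\xi x)$, $\tilde k_\xi(x):=k(x)\,e(-\xi x)$, I obtain
\[
\Lambda(f,g,k,p) = \sum_\xi \hat p(\xi)\,\mathbb{E}_{x,y}\bigl[\tilde f_\xi(x)\,\tilde g_\xi(x+P(y))\,\tilde k_\xi(x+Q(y))\bigr].
\]
Since $P, Q$ are linearly independent and vanish at $0$, \thref{poly} applies to the three-term progression $(x, x+P(y), x+Q(y))$ with these modulated functions, and its error is uniform in $\xi$ because the bound depends only on the $L^\infty$ norms of the inputs (which equal those of $f, g, k$). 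Thus, for every $\xi$,
\[
\mathbb{E}_{x,y}\bigl[\tilde f_\xi(x)\,\tilde g_\xi(x+P(y))\,\tilde k_\xi(x+Q(y))\bigr] = \hat f(\xi)\,\hat g(-\xi)\,\hat k(-\xi) + O(N^{-\gamma}),
\]
so summing against $\hat p(\xi)$ yields
\[
\Lambda(f,g,k,p) = \sum_\xi \hat p(\xi)\,\hat f(\xi)\,\hat g(-\xi)\,\hat k(-\xi) + O(\|\hat p\|_{\ell^1}\,N^{-\gamma}).
\]

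The second step is to compute $\Lambda^1(f,g,k,p)$ in closed form. Performing the bijective change of variables $(u,v,w) = (x+y, x+z, x)$ on $(\mathbb{Z}/N\mathbb{Z})^3$ rewrites the count as $\mathbb{E}_{u,v,w}\,f(w)\,g(u)\,k(v)\,p(u+v-w)$. Fourier expanding $p$ and separating the three independent averages then produces the \emph{exact} identity
\[
\Lambda^1(f,g,k,p) = \sum_\xi \hat p(\xi)\,\hat f(\xi)\,\hat g(-\xi)\,\hat k(-\xi),
\]
and subtracting this from the previous display immediately gives $\Lambda(f,g,k,p) - \Lambda^1(f,g,k,p) = O(\|\hat p\|_{\ell^1}\,N^{-\gamma})$, as required.

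No substantive obstacle arises in this argument: it is a clean Fourier decoupling whose only nontrivial input is \thref{poly}, already established in prior work. The main bookkeeping subtlety is keeping the signs in the convention $\hat f(\xi) = \mathbb{E}_x f(x)e^{2\pi i x \xi}$ consistent, so that the frequencies line up between the nonlinear count on $(x,x+P(y),x+Q(y),x+P(y)+Q(y))$ and the linear count on $(x,x+y,x+z,x+y+z)$ after each side is diagonalized in Fourier space.
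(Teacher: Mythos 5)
Your proposal is correct and follows essentially the same route as the paper: Fourier expand $p$, factor the phase $e(-\xi(x+P(y)+Q(y)))$ into modulations of $f,g,k$, and apply \thref{poly} uniformly in $\xi$ to pick up the $O(\|\hat p\|_{\ell^1}N^{-\gamma})$ error. The only cosmetic difference is that you evaluate both $\Lambda$ and $\Lambda^1$ as the explicit diagonal sum $\sum_\xi \hat p(\xi)\hat f(\xi)\hat g(-\xi)\hat k(-\xi)$, whereas the paper re-synthesizes the linear count $\Lambda^1(f,g,k,p)$ directly from the modulated averages.
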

\begin{proof}
Fourier expanding, we see that
$$p(x) = \sum_{\xi} \hat{p}(\xi) e(-\xi x)$$
so that
\begin{align*}
\Lambda(f, g, k, p) &= \sum_\xi \hat{p}(\xi) \mathbb{E}_{x, y} f(x)g(x + P(y))k(x + Q(y)) e(-\xi(x + P(y) + Q(y))) \\
&= \sum_\xi p(\xi) \mathbb{E}_{x, y, z} f(x) e(\xi x) g(x + y) e(-\xi(x + y)) k(x + z) e(-\xi(x + z)) + O(\|\hat{p}\|_{\ell^1}N^{-\gamma}) \\
&= \mathbb{E}_{x, y, z} f(x)g(x + y)k(x + z)p(x + y + z) + O(\|\hat{p}\|_{\ell^1} N^{-\gamma})
\end{align*}
where the second equality follows from \thref{poly}.
\end{proof}
We use the following regularity lemma due to Peluse \cite{P1}, a proof of which can be found in \cite[Appendix A]{PP3}:
\begin{lemma}\thlabel{regularity}
Let $f\colon \mathbb{Z}/N\mathbb{Z} \to \mathbb{C}$ be a function with $L^2$ norm is at most $1$ and $\|\cdot \|$ any norm on $(\mathbb{Z}/N\mathbb{Z})^{\mathbb{C}}$, $\epsilon_1, \epsilon_2, \epsilon_3, \epsilon_4 > 0$ be such that $\epsilon_2^{-1} \epsilon_3 + \epsilon_4^{-1} \epsilon_1 \le \frac{1}{2}$. Then we may write
$$f = f_a + f_b + f_c$$
where $f_a, f_b, f_c \colon \mathbb{Z}/N\mathbb{Z} \to \mathbb{C}$ obey the bounds
$$\|f_a\|^* \le \epsilon_1^{-1}, \|f_b\|_{L^1} \le \epsilon_2, \|f_c\|_{L^\infty} \le \epsilon_3^{-1}, \|f_c\| \le \epsilon_4$$
and $\|\cdot \|^*$ denotes the dual norm of $\|\cdot \|$.
\end{lemma}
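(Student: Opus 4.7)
The plan is to prove the decomposition via a Hahn--Banach / bipolar-theorem argument applied to suitably chosen convex subsets of the finite-dimensional Hilbert space $L^2(\mathbb{Z}/N\mathbb{Z})$ equipped with the inner product $\langle f, g\rangle = \mathbb{E}_x f(x)\overline{g(x)}$. Introduce the symmetric convex sets
\[ K_a = \{g : \|g\|^* \le \epsilon_1^{-1}\}, \quad K_b = \{g : \|g\|_{L^1} \le \epsilon_2\}, \quad K_c = \{g : \|g\|_{L^\infty} \le \epsilon_3^{-1}\} \cap \{g : \|g\| \le \epsilon_4\}. \]
The sought decomposition of $f$ is exactly the statement that $f \in K_a + K_b + K_c$. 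Since $\|f\|_{L^2} \le 1$, it suffices to establish the stronger inclusion $B_{L^2}(0,1) \subseteq K_a + K_b + K_c$.

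Identifying $L^2(\mathbb{Z}/N\mathbb{Z})$ with its own dual in the canonical way, $B_{L^2}(0,1)$ is self-polar, so by the bipolar theorem the desired inclusion is equivalent to $(K_a + K_b + K_c)^\circ \subseteq B_{L^2}(0,1)$. The polar of a Minkowski sum of convex sets containing $0$ reduces to the sum of support functions, while the support function of the intersection $K_c$ is, by Fenchel duality, the inf-convolution of the individual support functions. Combined with the standard dualities (dual of $L^1$ is $L^\infty$ and vice versa, and $\|\cdot\|^{**} = \|\cdot\|$ in finite dimensions), this gives
\[ (K_a + K_b + K_c)^\circ = \Bigl\{\psi : \epsilon_1^{-1}\|\psi\| + \epsilon_2\|\psi\|_{L^\infty} + \inf_{\psi_1+\psi_2 = \psi}\bigl( \epsilon_3^{-1}\|\psi_1\|_{L^1} + \epsilon_4\|\psi_2\|^* \bigr) \le 1\Bigr\}. \]

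Given any $\psi$ in this polar, non-negativity of each summand yields $\|\psi\| \le \epsilon_1$ and $\|\psi\|_{L^\infty} \le \epsilon_2^{-1}$, and one may select a near-optimal splitting $\psi = \psi_1 + \psi_2$ with $\|\psi_1\|_{L^1} \le \epsilon_3$ and $\|\psi_2\|^* \le \epsilon_4^{-1}$. The key step is then a single H\"older-type estimate:
\[ \|\psi\|_{L^2}^2 \le |\langle \psi_1, \psi\rangle| + |\langle \psi_2, \psi\rangle| \le \|\psi_1\|_{L^1}\|\psi\|_{L^\infty} + \|\psi_2\|^*\|\psi\| \le \frac{\epsilon_3}{\epsilon_2} + \frac{\epsilon_1}{\epsilon_4} \le \frac{1}{2}, \]
invoking the hypothesis $\epsilon_2^{-1}\epsilon_3 + \epsilon_4^{-1}\epsilon_1 \le \tfrac12$. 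Hence $\|\psi\|_{L^2} \le 1$, so the polar lies in $B_{L^2}(0,1)$ and the decomposition follows.

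The main conceptual point is the identification of the support function of $K_c$ as an inf-convolution, which is what permits the two structural constraints on $f_c$ (boundedness in $L^\infty$ \emph{and} smallness in $\|\cdot\|$) to be dualized separately against the $L^\infty$ and $\|\cdot\|$ bounds arising from $K_b$ and $K_a$; after this, the computation is just H\"older. The only other detail is to confirm that the bipolar theorem applies cleanly over $\mathbb{C}$, which it does because all four sets $K_a, K_b, K_c$ and $B_{L^2}(0,1)$ are balanced (invariant under multiplication by unit complex scalars).
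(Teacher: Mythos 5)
Your proof is correct. Note that the paper does not actually prove this lemma itself --- it quotes it from Peluse and Peluse--Prendiville (Appendix A of the cited version of \emph{Quantitative Bounds in the Nonlinear Roth Theorem}), and the proof there is the same Hahn--Banach/duality argument you give; your bipolar-theorem and inf-convolution packaging is a correct variant of it, and in fact the Fenchel-duality step for the intersection $K_c$ can be bypassed entirely by testing the separating functional $\psi$ (which already satisfies $\|\psi\|_{L^\infty}\le\epsilon_2^{-1}$ and $\|\psi\|\le\epsilon_1$) against the single element $g_c=\lambda\psi$ with $\lambda=\min\bigl(\epsilon_2\epsilon_3^{-1},\epsilon_4\epsilon_1^{-1}\bigr)\ge 2$, which gives $\|\psi\|_{L^2}^2\le 1/2$ directly.
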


Applying the lemma on $p$ with the norm being the $L^\infty$ of the Fourier transform (whose dual norm is the $\ell^1$ norm of the Fourier transform), and $\epsilon_1, \dots, \epsilon_4$ to be chosen later, we obtain $p = p_a + p_b + p_c$ where
$$\|\hat{p_a}\|_{\ell^1} \le \epsilon_1^{-1}, \|p_b\|_{L^1} \le \epsilon_2, \|p_c\|_{L^\infty} \le \epsilon_3^{-1}, \|\hat{p}_c\|_{L^\infty} \le \epsilon_4.$$
We now decompose
$$\Lambda(f, g, k, p) = \Lambda(f, g, k, p_a)  + \Lambda(f, g, k, p_b)+ \Lambda(f, g, k, p_c).$$
Since $f, g, k$ are 1-bounded, we can bound second term by
$$|\Lambda(f, g, k, p_b)| \le \mathbb{E}_{x, y} |p_b(x + P(y) + Q(y))| \le \epsilon_2.$$
Using \thref{gowers}, the last term either satisfies
$$|\Lambda(f, g, k, p_c)| \le \epsilon_3^{-1}O\left(\frac{1}{\log_{O(1)}(N)}\right)$$
or it satisfies
$$|\Lambda(f, g, k, p_c)| \le \epsilon_3^{-1} |\Lambda(f, g, k, \epsilon_3 p_c)| \le \epsilon_3^{-1}c_1(\|\epsilon_3 \hat{p_c}\|_{L^\infty}^{1/2}) = \epsilon_3^{-1} c_1(\epsilon_3^{1/2} \epsilon_4)$$
where $c_1$ is an iterated logarithm function of the form $c_1(x) = \frac{1}{\log_{O(1)}(x^{-1})}$. By \thref{reduce}, the first term is bounded by
$$\Lambda(f, g, k, p_a) = \Lambda^1(f, g, k, p_a) + O(N^{-\delta}\epsilon_1^{-1}).$$
We thus have
$$\Lambda(f, g, k, p) = \Lambda^1(f, g, k, p_a) + O(N^{-\delta} \epsilon_1^{-1}) + O(\epsilon_3^{-1} c(\epsilon_3^{1/2} \epsilon_4)) + \epsilon_2 + \epsilon_3^{-1}O\left(\frac{1}{\log_{O(1)}(N)}\right).$$
Doing the same procedure for $\Lambda^1(f, g, k, p)$, we see that
$$\Lambda^1(f, g, k, p) = \Lambda^1(f, g, k, p_a) + O(N^{-\delta}\epsilon_1^{-1}) + O(\epsilon_3\epsilon_4) + \epsilon_2.$$
Thus,
$$\Lambda(f, g, k, p) = \Lambda^1(f, g, k, p) + O(N^{-\delta} \epsilon_1^{-1}) + O(\epsilon_3^{-1} c(\epsilon_3^{1/2} \epsilon_4)) + O(\epsilon_3\epsilon_4) + \epsilon_2 + \epsilon_3^{-1}O\left(\frac{1}{\log_{O(1)}(N)}\right).$$
We now specify the values of $\epsilon_1, \dots, \epsilon_4$. Picking $\epsilon_3 \ll \epsilon_2 = O(\log_{C}(N))^{-1})$ for $C$ large enough, $\epsilon_4 \sim N^{-\gamma}$, $\epsilon_1 \ll \epsilon_4$, we obtain a bound of size
$$\mathbb{E}_{x, y} f(x)g(x + P(y))k(x + Q(y))p(x + P(y) + Q(y)) = \mathbb{E}_{x, y, z}f(x)g(x + y)k(x + z)p(x + y + z)$$ 
$$+ O\left(\frac{1}{\log_{(O(1))}(N)}\right).$$
This completes the proof of \thref{count}.

\subsection{Deducing \thref{conj1} from \thref{conj2}}
To generalize the deduction of \thref{count} from \thref{gowers}, we can no longer rely on the Fourier transform. Instead, we use dual $U^s$ functions. Indeed, as observed by \cite{M}, correlating with dual functions tends to give more information than just correlating with functions. In this case, dual functions offer enough information that we can deduce \thref{conj2} from \thref{conj1}. It is helpful to think of the dual $U^s$ norm as:
$$\inf_{(f_{\omega, i}) \in (\mathbb{Z}/N\mathbb{Z})^{\{0, 1\}^{s}_*}; \|f_{\omega, i}\|_\infty = 1} \{\sum_i |a_i|: \sum_i a_i \mathcal{D}_{U^s}(f_{\omega, i}) = f\}$$
where
$$\mathcal{D}_{U^s}(f_\omega)(x) = \sum_{h_1, \dots, h_s} \prod_{\omega \in \{0, 1\}^{s}_*} f_\omega(x + h \cdot \omega).$$
While the above statement is not true, the $U^s$ norm does have a rather obvious inverse theorem:
\begin{proposition}\thlabel{usinverse}
Suppose $f: \mathbb{Z}/N\mathbb{Z} \to \mathbb{C}$ is a 1-bounded function with $\|f\|_{U^s} \ge \delta$. Then there exists a dual $U^s$ function $D_{U^s}((f_\omega)_{\omega \in \{0, 1\}^s_*})$ such that with $f_\omega$ 1-bounded functions
$$|\langle f, D_{U^s}(f_\omega)\rangle| \ge \delta^{2^s}.$$
\end{proposition}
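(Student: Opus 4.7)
The plan is to verify the claim essentially on the nose from the definition of the $U^s$ norm, with no Cauchy--Schwarz, pigeonhole, or inverse theory required: this is why the paper calls it a ``rather obvious'' inverse theorem. I would take each $f_\omega$ to be $f$ itself (or its conjugate $\bar f$, depending on the parity of $|\omega|$ dictated by the operator $C^{|\omega|}$), so that the dual function $\mathcal{D}_{U^s}$ tested against $f$ reproduces exactly $\|f\|_{U^s}^{2^s}$.

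Concretely, I would start from the Gowers inner product formula
$$\|f\|_{U^s}^{2^s} = \mathbb{E}_{x, h_1, \dots, h_s} \prod_{\omega \in \{0,1\}^s} C^{|\omega|} f(x + \omega \cdot h),$$
isolate the corner $\omega = 0^s$ (whose factor is simply $f(x)$), and interchange the order of averaging to obtain
$$\|f\|_{U^s}^{2^s} = \mathbb{E}_x\, f(x) \cdot \mathbb{E}_{h_1, \dots, h_s} \prod_{\omega \in \{0,1\}^s_*} C^{|\omega|} f(x + \omega \cdot h).$$
The second factor on the right, viewed as a function of $x$, is precisely $\mathcal{D}_{U^s}((f_\omega)_{\omega \in \{0,1\}^s_*})(x)$ with all $f_\omega = f$ (each of which is one-bounded by hypothesis). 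The full right-hand side is therefore $\langle f, \mathcal{D}_{U^s}(f_\omega)\rangle$ up to the standard conjugation in the second slot of the $L^2$ pairing, which is absorbed by flipping some of the $f_\omega$ between $f$ and $\bar f$ in a manner compatible with the $C^{|\omega|}$ in the definition of the dual function.

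Taking absolute values then gives $|\langle f, \mathcal{D}_{U^s}(f_\omega)\rangle| = \|f\|_{U^s}^{2^s} \geq \delta^{2^s}$, which is the required bound. The only point requiring even minor care is bookkeeping the conjugations coming from the operators $C^{|\omega|}$ so that they match the sign conventions adopted in the definition of $\mathcal{D}_{U^s}$; if one follows the unconjugated convention in the informal formula in the sketch, one simply compensates by replacing $f$ with $\bar f$ in those slots $\omega$ of odd weight, and the identity still holds. There is no genuine obstacle in the proof.
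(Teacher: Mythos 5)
Your proof is correct and is exactly the intended argument: the paper states this proposition without proof precisely because the one-line verification you give (isolate the $\omega = 0^s$ factor in the Gowers inner product, recognize the remaining average over $h$ as the dual function with all $f_\omega$ equal to $f$ or $\bar f$, and conclude $|\langle f, \mathcal{D}_{U^s}(f_\omega)\rangle| = \|f\|_{U^s}^{2^s} \ge \delta^{2^s}$) is the standard one. Your handling of the conjugation bookkeeping relative to the paper's informal unconjugated formula is also fine.
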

In addition, \cite{G1} gives a procedure for converting an inverse theorem into a regularity lemma. Using that procedure on \thref{regularity}, we have the following:
\begin{lemma}\thlabel{regularity2}
Let $f: \mathbb{Z}/N \mathbb{Z} \to \mathbb{C}$ be a function with $L^2$ norm at most one, $\epsilon_1, \dots, \epsilon_4$ be as in \thref{regularity}, $\eta > 0$. Then there exists a decomposition $f = f_1 + f_2 + f_3$ where
$$f_1 = \sum_j a_j D_{U^s}((f_{\omega, j})), \hspace{0.1in} \sum_j |a_j| \le \eta(\epsilon_1 \eta)^{-2^s}$$
$$\|f_2\|_{L^1} \le \epsilon_2 + \eta$$
$$\|f_3\|_{L^\infty} \le \epsilon_3^{-1}, \hspace{0.1in} \|f_3\|_{U^s} \le \epsilon_4.$$
\end{lemma}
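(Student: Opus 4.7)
The plan is to combine Peluse's regularity lemma (\thref{regularity}) applied with the $U^s$ norm with a Hahn--Banach separation argument in the style of \cite{G1}. First I apply \thref{regularity} with $\|\cdot\| = \|\cdot\|_{U^s}$, whose dual I denote $\|\cdot\|_{U^s}^*$, to obtain a decomposition $f = f_a + f_b + f_c$ where $\|f_a\|_{U^s}^* \le \epsilon_1^{-1}$, $\|f_b\|_{L^1} \le \epsilon_2$, $\|f_c\|_{L^\infty} \le \epsilon_3^{-1}$, and $\|f_c\|_{U^s} \le \epsilon_4$. I set $f_3 := f_c$, which already satisfies the required bounds. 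The remaining work is to rewrite $f_a$ as a short linear combination of dual $U^s$ functions up to a small $L^1$ error, which will be absorbed into $f_2$ together with $f_b$.

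The core step is the claim: for every $\eta > 0$, one has a decomposition $f_a = h + r$ with $h = \sum_j a_j D_{U^s}((f_{\omega,j}))$, coefficient bound $\sum_j |a_j| \le M := \eta(\epsilon_1\eta)^{-2^s}$, and $\|r\|_{L^1} \le \eta$. Let $V = \{D_{U^s}((f_\omega)) : f_\omega \text{ one-bounded}\}$, let $K = \overline{\mathrm{conv}}(V \cup -V)$, and consider the closed convex symmetric target set $S := MK + \eta B_{L^1}$, where $B_{L^1}$ denotes the unit ball in $L^1$. Suppose for contradiction that $f_a \notin S$. By the Hahn--Banach theorem there exists $\phi \colon \mathbb{Z}/N\mathbb{Z} \to \mathbb{C}$ with $\mathrm{Re}\langle f_a, \phi\rangle > 1$ and $\mathrm{Re}\langle g, \phi\rangle \le 1$ for every $g \in S$. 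Testing against $\eta B_{L^1}$ yields $\|\phi\|_{L^\infty} \le 1/\eta$, while testing against $MK$ yields $\sup_{g \in V}|\langle g,\phi\rangle| \le 1/M$.

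Now the inverse theorem \thref{usinverse}, applied in contrapositive form to the one-bounded function $\eta\phi$, says: $\sup_{g \in V}|\langle \eta\phi, g\rangle| \le \eta/M$ forces $\|\eta\phi\|_{U^s} \le (\eta/M)^{1/2^s}$, i.e.\ $\|\phi\|_{U^s} \le \eta^{1/2^s - 1} M^{-1/2^s}$. Pairing with $f_a$ and using the dual-norm bound gives
\[
1 < \mathrm{Re}\langle f_a, \phi\rangle \le \|f_a\|_{U^s}^* \|\phi\|_{U^s} \le \epsilon_1^{-1} \eta^{1/2^s - 1} M^{-1/2^s},
\]
which rearranges to $M < \epsilon_1^{-2^s}\eta^{1-2^s} = \eta(\epsilon_1\eta)^{-2^s}$, contradicting the definition of $M$. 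Hence $f_a \in S$ and one extracts the desired $h,r$; setting $f_1 := h$ and $f_2 := f_b + r$ yields the decomposition, with $\|f_2\|_{L^1} \le \epsilon_2 + \eta$ by the triangle inequality.

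The main technical obstacle is threading the $2^s$-power loss of \thref{usinverse} through the Hahn--Banach duality so that it contributes exactly the factor $(\epsilon_1\eta)^{-2^s}$ in the coefficient bound and nothing worse, since any sloppiness in the exponents weakens the regularity lemma and consequently the bounds in \thref{count}. A minor additional care point is the complex-valued Hahn--Banach step, where one splits $\phi$ into real and imaginary, positive and negative parts at the cost of an $O(1)$ factor that can be absorbed into $M$; once this is dealt with, the rest of the deduction is purely formal.
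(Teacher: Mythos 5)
Your proof is correct and follows essentially the same route as the paper: apply \thref{regularity} with the $U^s$ norm, then convert the structured piece into a short sum of dual $U^s$ functions plus a small $L^1$ error via a Hahn--Banach separation argument, using \thref{usinverse} to rule out the existence of a separating functional. The only cosmetic differences are that you invoke \thref{usinverse} in contrapositive form (deducing an upper bound on $\|\phi\|_{U^s}$ and then contradicting the pairing with $f_a$) whereas the paper deduces a lower bound $\|\ell\|_{U^s} \ge \epsilon_1$ and applies the inverse theorem directly to produce a dual function with which $\ell$ correlates too strongly; and you are a bit more careful about the $L^1$ versus $\ell^1$ normalization (the paper writes $\|h\|_{\ell^1} \le \eta$ where $\|h\|_{L^1} \le \eta$ is what is actually used, as is consistent with the final bound $\|f_2\|_{L^1} \le \epsilon_2 + \eta$ and with the stated conclusion $\|\ell\|_{L^\infty} \le \eta^{-1}$).
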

\begin{proof}
Apply \thref{regularity} for the norm $\|\cdot \|_{U^s}$ to obtain a decomposition
$$f = g_1 + g_2 + g_3.$$
We shall desire a decomposition
$$g_1 = \sum_j a_j D_{U^s}((f_{\omega, j})) + h$$
where $h$ has relatively small $\ell^1$ norm. To do this, we follow the argument in \cite[Section 3.2]{G1}. If a decomposition doesn't exist with
$$\sum_j |a_j| \le \eta(\epsilon_1 \eta)^{-2^s}$$
$$\|h\|_{\ell^1} \le \eta$$
then by Hahn-Banach (or rather, by \cite[Corollary 3.2]{G1}), there exists a linear functional $\ell: \mathbb{Z}/N\mathbb{Z} \to \mathbb{C}$ such that $\langle \ell, D_{U^s}((f_{\omega}))\rangle < \eta^{-1} (\epsilon_1 \eta)^{2^s}$ for all dual $U^{s}$ functions, $\langle g_1, \ell \rangle > 1$, and $\|\ell\|_{L^\infty} \le \eta^{-1}$. However, $\langle g_1, \ell \rangle > 1$ implies that $\|\ell\|_{U^s} \ge \epsilon_1$. By \thref{usinverse}, this implies that
$$\langle \ell, \mathcal{D}_{U^s}(f_\omega) \rangle \ge \eta^{-1} (\epsilon_1 \eta)^{2^s}$$
which is a contradiction. Hence, such a decomposition exists, and letting
$$f_1 = g_1 - h, f_2 = g_2 + h, f_3 = g_3$$
we obtain the desired result.
\end{proof}
For $P = (P_1, \dots, P_s)$ where its components are linearly independent and $P_i(0) = 0$, $s \ge t$ let 
$$\Lambda_{s, t}((f_\omega)) = \mathbb{E}_{x, y} \prod_{\omega \in \{0, 1\}^s: |\omega| \le t} f_\omega(x + \omega \cdot P)$$
$$\Lambda^1_{s, t}((f_\omega)) = \mathbb{E}_{x, h_1, \dots, h_s} \prod_{\omega \in \{0, 1\}^t: |\omega| \le t} f_\omega(x + \omega \cdot h).$$
We shall give a proof that \thref{conj2} implies \thref{conj1} via a double induction on $s$ and $t$. Specifically, we show that $(s, t - 1)$ implies $(s, t)$ and $(t, t - 1)$ implies $(t, t)$. The base case follows from \thref{poly}. \\\\
\textbf{$(s, t - 1)$ implies $(s, t)$} \\
For each $\omega'$ with $|\omega'| = t$, we use \thref{regularity2} for the $U^t$ norm to decompose
$$f_{\omega'} = f_{\omega', 1} + f_{\omega', 2} + f_{\omega', 3}.$$
The point is that in the dual function
$$D_{U^t}(g_{\omega'})(x + \omega' \cdot P(y)) = \mathbb{E}_{h_1, \dots, h_t}  \prod_{\omega \in \{0, 1\}^t_*}g_{\omega'}(x + \omega' \cdot P(y) + \omega \cdot h),$$
letting $j_{\omega', 1}, \dots, j_{\omega', t}$ being the indices of the nonzero components of $\omega'$, we can make the change of variables $h_i \mapsto h_i - P_{j_{\omega', i}}(y)$ to essentially eliminate the $x + \omega' \cdot P(y)$ term, and reduce to the $(s, t - 1)$ case. As in the deduction of \thref{count} from \thref{gowers}, it follows that the main term reduces to the the setting of $(s, t - 1)$. 
\begin{lemma}\thlabel{cscomplexity}
Let $(f_{\omega})_{\omega \in \{0, 1\}^s}$ be 1-bounded functions on $\mathbb{Z}/N\mathbb{Z}$. For $\omega$ with $|\omega| = t$, we have
$$|\Lambda_{s, t}^1(f_\omega)| \le \|f_\omega\|_{U^t}$$
\end{lemma}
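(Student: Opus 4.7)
The plan is to reduce to the standard Cauchy--Schwarz--Gowers inequality (\thref{cauchyschwarzgowers}) in the $U^t$ norm by regrouping the factors of $\Lambda^1_{s,t}$ into $2^t$ blocks indexed by the first $t$ coordinates of the summation variable, thereby isolating $f_\omega$ as the unique ``top'' block. Write $\omega^\ast$ for the fixed vertex of $\{0,1\}^s$ with $|\omega^\ast|=t$ appearing in the lemma statement (so that the target norm is $\|f_{\omega^\ast}\|_{U^t}$), and reserve $\omega$ for summation indices. By relabeling the variables $h_1,\ldots,h_s$, which is a bijection of the integration domain and hence leaves $\Lambda^1_{s,t}$ invariant, I may assume without loss of generality that $\omega^\ast = (\underbrace{1,\ldots,1}_{t},0,\ldots,0)$.

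Next, split $h = (h',h'')$ with $h' = (h_1,\ldots,h_t)$ and $h'' = (h_{t+1},\ldots,h_s)$, and write each $\omega \in \{0,1\}^s$ as $(\omega',\omega'') \in \{0,1\}^t \times \{0,1\}^{s-t}$, so that $|\omega| = |\omega'| + |\omega''|$ and $\omega \cdot h = \omega' \cdot h' + \omega'' \cdot h''$. For each $\omega' \in \{0,1\}^t$ and each value of $h''$, define the one-bounded function
$$F_{\omega'}(y; h'') := \prod_{\substack{\omega'' \in \{0,1\}^{s-t} \\ |\omega''| \le t - |\omega'|}} f_{(\omega',\omega'')}(y + \omega'' \cdot h'').$$
The decisive observation is that when $\omega' = 1^t$ (the $\omega'$-part of $\omega^\ast$), the constraint $|\omega''| \le t - |\omega'| = 0$ forces $\omega'' = 0$, so $F_{1^t}(y;h'') = f_{\omega^\ast}(y)$ is independent of $h''$.

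With this grouping, the average rewrites as
$$\Lambda^1_{s,t}((f_\omega)) = \mathbb{E}_{h''}\, \mathbb{E}_{x,h'} \prod_{\omega' \in \{0,1\}^t} F_{\omega'}(x + \omega' \cdot h';\, h'').$$
For each fixed $h''$, the inner average is a $U^t$ Gowers inner product, and \thref{cauchyschwarzgowers} yields
$$\Bigl| \mathbb{E}_{x,h'} \prod_{\omega' \in \{0,1\}^t} F_{\omega'}(x + \omega' \cdot h';\, h'') \Bigr| \le \prod_{\omega' \in \{0,1\}^t} \|F_{\omega'}(\cdot;h'')\|_{U^t}.$$
Since each $F_{\omega'}$ is one-bounded, $\|F_{\omega'}\|_{U^t} \le 1$ whenever $\omega' \ne 1^t$, while $\|F_{1^t}(\cdot;h'')\|_{U^t} = \|f_{\omega^\ast}\|_{U^t}$ for every $h''$. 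Averaging over $h''$ and using the triangle inequality gives $|\Lambda^1_{s,t}((f_\omega))| \le \|f_{\omega^\ast}\|_{U^t}$, which is the claim.

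The argument is essentially bookkeeping; the only point meriting real care is that the constraint $|\omega| \le t$ decomposes correctly as $|\omega'| + |\omega''| \le t$ under the splitting, and that $\omega^\ast$ lifts uniquely to the pair $(1^t,0)$, so that $f_{\omega^\ast}$ is isolated without any spurious companion factors. Any complex conjugations one might want to attach to individual $f_\omega$'s can be absorbed into the corresponding $F_{\omega'}$ without affecting the estimate, since Cauchy--Schwarz--Gowers is blind to such sign patterns.
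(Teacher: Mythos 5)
Your proof is correct and is essentially the paper's argument written out in full: the paper simply invokes the Cauchy--Schwarz--Gowers inequality ``along the variables $x$ and the $h_i$'s where $\omega^\ast$ appears,'' which is exactly your grouping of the factors into the functions $F_{\omega'}(\cdot;h'')$ for fixed $h''$ followed by GCS in the $U^t$ inner product and averaging over $h''$. The bookkeeping (the splitting $|\omega|=|\omega'|+|\omega''|\le t$, the isolation of $f_{\omega^\ast}$ as $F_{1^t}$, the absorption of conjugation patterns, and the bound $\|F_{\omega'}\|_{U^t}\le 1$ by one-boundedness) is all handled correctly.
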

\begin{proof}
This follows from the Cauchy-Schwarz-Gowers inequality (\thref{cauchyschwarzgowers}) along the variables $x$ and $h_i$'s where $\omega$ appear in.
\end{proof}
Letting
$$\epsilon_1 \ll \epsilon_4 = O((\log_{C_1}(N))^{-1}), \epsilon_2 \ll \epsilon_3 = O((\log_{C_2}(N))^{-1}), \eta = \epsilon_1$$
with $C_1, C_2$ sufficiently large, and $c$ a small constant, we obtain that
$$\Lambda_{s, t}(f_{\omega, 1}) = \Lambda^1_{s, t}(f_{\omega, 1}) + O\left(\frac{1}{\log_{O(1)}(N)}\right)$$
and the other terms involving $f_{\omega, 2}$ and $f_{\omega, 3}$ are of order $O\left(\frac{1}{\log_{O(1)}(N)}\right)$. In addition,
$$\Lambda^1_{s, t}(f_{\omega, 1}) = \Lambda^1(f_\omega) + O\left(\frac{1}{\log_{O(1)}(N)}\right)$$
as well. Thus,
$$\Lambda_{s, t}(f_\omega) = \Lambda^1_{s, t}(f_\omega) + O\left(\frac{1}{\log_{O(1)}(N)}\right)$$
\textbf{$(t, t - 1)$ implies $(t, t)$}\\
The proof is essentially the same as above. We use \thref{regularity2} on the function $f_{1^t}$ to decompose
$$f_{1^t} = f_{1^t, 1} + f_{1^t, 2} + f_{1^t, 3}$$
with
$$f_{1^t, 1} = \sum_i a_j D_{U^t}((f_{\omega, j})),$$
$f_{1^t, 2}$ has small $\ell^1$ norm, and $f_{1^t, 3}$ has controlled $L^\infty$ norm and small $U^s$ norm. As above, we have
$$f_{1^t, 1}(x + P_1(y) + \cdots + P_t(y)) = \sum_j a_j \mathbb{E}_{h_1, \dots, h_t} \prod_{\omega \in \{0, 1\}^t_*} f_{\omega, j}(x + P_1(y) + \cdots + P_t(y) + \omega \cdot h).$$
Making a change of variables $h_1 \mapsto h_1 - P_1(y), \dots, h_t \mapsto h_t - P_t(y)$, it follows that we have gotten rid of the $x + P_1(y) + \cdots + P_t(y)$ term and we are in the realm of the $(t, t - 1)$ case. Choosing
$$\epsilon_1 \ll \epsilon_4 = O((\log_{C_1}(N))^{-1}), \epsilon_2 \ll \epsilon_3 = O((\log_{C_2}(N))^{-1}), \eta = \epsilon_1$$
with $C_1$ and $C_2$ sufficiently large and $c$ sufficiently small would ensure that our error terms are sufficiently small, thereby letting us conclude that
$$\Lambda_{t, t}(f_{\omega, 1}) = \Lambda^1_{t, t}(f_{\omega, 1}) + O\left(\frac{1}{\log_{O(1)}(N)}\right)$$
and since the terms $f_{\omega, 2}$, $f_{\omega, 3}$ are either small or have small Gowers norm, it follows that
$$\Lambda_{t, t}(f_\omega) = \Lambda^1_{t, t}(f_\omega) + O\left(\frac{1}{\log_{O(1)}(N)}\right).$$
\section{PET Induction}
Our first step towards the proof of \thref{gowers} is proving an initial Gowers norm estimate. We do this by using PET induction, due to Bergelson and Leibman in \cite{BL}. This lemma is recorded in \cite[Proposition 2.2]{P1}.
\begin{lemma}\thlabel{pet1}
There exists some $s$, $c$, $\zeta > 0$ such that
$$|\Lambda(f, g, k, p)| \ll \min(\|f\|_{U^s}, \|g\|_{U^s}, \|k\|_{U^s}, \|p\|_{U^s})^{c} + O(N^{-\zeta})$$
for all 1-bounded functions $f, g, k, p$.
\end{lemma}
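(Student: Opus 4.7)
The plan is to run the PET (Polynomial Exhaustion Technique) induction of Bergelson--Leibman in its quantitative form, as developed by Peluse and Prendiville. The lemma asserts a Gowers-norm bound in terms of the $U^s$ norm of \emph{any one} of the four functions. By the translation symmetry $x \mapsto x - R(y)$ with $R \in \{0, P, Q, P+Q\}$, we can always place the function we wish to control at the position with zero weight, so it suffices to exhibit the argument controlling one chosen function, say $p$.

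After the shift $x \mapsto x - P(y) - Q(y)$, we may rewrite
$$\Lambda(f,g,k,p) = \mathbb{E}_{x,y} \, p(x) \, k(x - P(y)) \, g(x - Q(y)) \, f(x - P(y) - Q(y)).$$
Now we iterate van der Corput (Cauchy--Schwarz) in the $y$ variable: at each stage we shift $y \mapsto y + h$, average over $h$, and square. This replaces every polynomial weight $R(y)$ attached to a non-pivot function by a difference polynomial $R(y+h) - R(y)$, whose $y$-degree is strictly smaller (for fixed generic $h$), while producing new one-bounded derivative functions and introducing an additional averaging variable. Repeating this scheme, with bookkeeping of weight vectors as in Bergelson--Leibman, one exhausts the polynomials one degree at a time, until the only weights still depending on $y$ are linear. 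A final Cauchy--Schwarz in $y$ then collapses the expression into a $U^s$-type average of $p$, up to a Weyl-equidistribution error of size $O(N^{-\zeta})$ that comes from replacing polynomial averages by linear ones.

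By symmetry, the same scheme (choosing a different initial shift, and a correspondingly different pivot for the PET iterations) yields the bound in terms of $\|f\|_{U^s}$, $\|g\|_{U^s}$, and $\|k\|_{U^s}$. The value of $s$ depends only on $\max(\deg P, \deg Q)$ and the number of van der Corput iterations required, both of which are $O(1)$; the exponent $c$ arises from the repeated squarings, and $\zeta > 0$ from Weyl.

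The main obstacle is the bookkeeping of the weight-vector system through the iterations: one must verify at every step that the polynomials still attached to the non-pivot functions retain enough linear independence that the induction strictly decreases a suitable complexity measure and does not collapse trivially. For our progression this is automatic, since the hypothesis that $P, Q$ are linearly independent with $P(0) = Q(0) = 0$ makes any three of $\{0, P, Q, P + Q\}$ linearly independent modulo constants once the fourth is distinguished as the pivot, which is precisely the input needed for the standard PET argument (as recorded, for instance, in Prendiville \cite{Pr2}). Invoking that framework directly yields the stated bound.
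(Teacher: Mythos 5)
Your route is the same as the paper's: the paper does not reprove the PET estimate at all, but simply quotes the quantitative Bergelson--Leibman/PET machinery of \cite{Pr2} (see also \cite{Pr1}), which is exactly the framework you invoke, together with the standard shift $x \mapsto x - R(y)$ to place whichever of the four functions you wish to control at the zero shift; your van der Corput sketch is the standard outline of that argument. One correction to your final justification: the claim that any three of $\{0, P, Q, P+Q\}$ become linearly independent (modulo constants) once the fourth is distinguished is false --- $P+Q$ is by definition a linear combination of $P$ and $Q$, so $\{P, Q, P+Q\}$ is dependent --- but linear independence of the shifts is not what PET needs; the correct input is that the shift polynomials are pairwise essentially distinct, i.e.\ all pairwise differences ($P$, $Q$, $P+Q$, $P-Q$) are non-constant, which does follow from the linear independence of $P$ and $Q$ together with $P(0)=Q(0)=0$, so the appeal to the framework of \cite{Pr2} still goes through (its weight-vector bookkeeping is precisely what guarantees this property is preserved through the van der Corput iterations).
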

\begin{remark}
If $P(y) = y^2$ and $Q(y) = y^3$, we may take an upper bound of $s$ to be $34$.
\end{remark}
Let $\Lambda^2(f, g, k, p) = \mathbb{E}_{x, y}f(x)g(x + P(y))k(x + Q(y))p(y)$.
\begin{lemma}\thlabel{pet2}
Let $f, g, k, p$ be 1-bounded functions. Then there exists $s, c, \zeta > 0$ not depending on $f, g, k, p$ such that
$$|\Lambda^2(f, g, k, p))| \ll \min(\|f\|_{U^s}, \|g\|_{U^s}, \|k\|_{U^s})^{c} + O(N^{-\zeta}).$$
\end{lemma}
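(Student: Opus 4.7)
My plan is to adapt the quantitative PET-induction argument of Bergelson--Leibman (in the form developed by Prendiville, cf.\ \cite{Pr1}, \cite{Pr2}) that proves \thref{pet1}, treating the weight $p(y)$ as an essentially harmless companion factor throughout the induction. The key observation is that $p$ is one-bounded and depends only on the $y$-variable: at every Cauchy--Schwarz or van der Corput step of the induction, the weight contributes only translates $\prod_i p(y+h_i)^{\pm 1}$ that remain bounded in modulus by $1$, so we may discard them via the trivial bound $|p|\le 1$ (or, when a Cauchy--Schwarz in $y$ is useful, via $\mathbb{E}_y|p|^{2^k}\le 1$) without losing anything in the final estimate.

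For $\|f\|_{U^s}$ control, I would first apply Cauchy--Schwarz in $x$ to isolate $f$:
$$|\Lambda^2(f,g,k,p)|^2 \le \mathbb{E}_x \Bigl|\mathbb{E}_y g(x+P(y))k(x+Q(y))p(y)\Bigr|^2.$$
Expanding the square over $(y,y')$, substituting $y' = y+h$, and translating $x \mapsto x-P(y)$ produces an average in which the $y$-weight is replaced by the one-bounded kernel $p(y)\overline{p(y+h)}$, and whose remaining factors involve the shifted differences $P(y+h)-P(y)$ and $Q(y+h)-Q(y)$. A bounded number of further PET moves---each a Cauchy--Schwarz eliminating one such difference term---drives the configuration to a point where the Cauchy--Schwarz--Gowers inequality \thref{cauchyschwarzgowers} yields a bound of the form $\|f\|_{U^s}^c$ for $s$ and $c$ depending only on $P$ and $Q$. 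To deduce $\|g\|_{U^s}$ (respectively $\|k\|_{U^s}$) control, I would precompose with the substitution $x\mapsto x-P(y)$ (respectively $x\mapsto x-Q(y)$) so that $g$ (respectively $k$) occupies the $x$-slot, and then run the same argument; these substitutions leave $p(y)$ unchanged as a $y$-only weight.

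The $O(N^{-\zeta})$ error in the bound arises from standard truncation considerations in $\mathbb{Z}/N\mathbb{Z}$: after several changes of variable one must discard a set of bad shift parameters $h$ on which the polynomial-type expressions governing the induction become degenerate, and such bad sets have fractional size $O(N^{-\zeta})$ by elementary counts of polynomial roots. The main obstacle is purely bookkeeping---one must track the polynomial weight-vectors through the induction and exploit the linear independence of $P$ and $Q$ to ensure that the PET moves terminate in $O(1)$ steps with a final configuration that genuinely captures a Gowers norm of the designated function. This is the same combinatorial issue as in \thref{pet1}, handled by Prendiville in \cite{Pr1}, \cite{Pr2}, and the extra one-bounded weight $p(y)$ plays no role in it.
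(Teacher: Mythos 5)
Your overall strategy (run PET treating the one-bounded, $y$-only weight $p$ as harmless) is the right spirit, but the execution has a genuine flaw: the very first Cauchy--Schwarz is in the wrong variable for the function you want to control. After your step
$$|\Lambda^2(f,g,k,p)|^2 \le \mathbb{E}_x \Bigl|\mathbb{E}_y\, g(x+P(y))k(x+Q(y))p(y)\Bigr|^2,$$
the function $f$ has been discarded entirely (you bounded $|f|\le 1$), so the right-hand side does not depend on $f$ and no amount of subsequent PET moves or applications of \thref{cauchyschwarzgowers} can produce a bound of the form $\|f\|_{U^s}^c + O(N^{-\zeta})$: taking $g=k=p\equiv 1$ makes your right-hand side equal to $1$ no matter how uniform $f$ is, so the chain of inequalities you propose cannot close. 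The same inversion infects your treatment of $g$ and $k$: substituting so that $g$ (or $k$) ``occupies the $x$-slot'' and then running the same argument places the target function precisely in the slot that your first Cauchy--Schwarz deletes. In a Cauchy--Schwarz/van der Corput scheme, the function you intend to keep must survive every application of Cauchy--Schwarz; the function sitting alone in the outer variable is the one that gets removed.

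The paper's proof makes the opposite (and correct) first move: Cauchy--Schwarz in $y$, which eliminates the weight $p$ (the only factor depending solely on $y$) at the cost of doubling $x$, giving
$$|\Lambda^2(f,g,k,p)|^2 \le \mathbb{E}_{x,y,h}\, \Delta_h f(x)\,\Delta_h g(x+P(y))\,\Delta_h k(x+Q(y)).$$
Now all three functions are still present (in differenced form), and for each fixed $h$ one can invoke the known PET estimate for the three-point pattern $(x, x+P(y), x+Q(y))$ with whichever of $\Delta_h f$, $\Delta_h g$, $\Delta_h k$ is the designated function, obtaining $\mathbb{E}_h \|\Delta_h f\|_{U^s}^{c} + O(N^{-\zeta_1})$ and then $\|f\|_{U^{s+1}}^{c'} + O(N^{-\zeta})$ via H\"older and $\mathbb{E}_h \|\Delta_h f\|_{U^s}^{2^s} = \|f\|_{U^{s+1}}^{2^{s+1}}$ (and similarly for $g$, $k$). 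If you prefer to keep your framework, the repair is to choose the Cauchy--Schwarz steps so that the designated function is never the one isolated in the outer average --- which in this configuration forces exactly the Cauchy--Schwarz in $y$ that the paper uses as its opening move.
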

\begin{proof}
By Cauchy-Schwarz on $y$, we have
$$|\Lambda^2(f, g, k, p)| \le \mathbb{E}_{x, y, h} \Delta_h f(x) \Delta_h g(x + P(y)) \Delta_h k(x + Q(y)).$$
Here, $\Delta_h f(x) = \overline{f(x + h)}f(x)$ (see section 2 for notation). We use PET induction to bound the above by
$$\mathbb{E}_h \|\Delta_h f\|_{U^s}^c + O(N^{-\zeta_1}) \ll \|f\|_{U^{s + 1}}^{c'} +  O(N^{-\zeta}).$$
The other cases for $g, k$ are similar.
\end{proof}
Thus, by \thref{pet1}, we may bound
$$|\Lambda(f, g, k, p)| \ll \|p\|_{U^s}^{O(1)} + O(N^{-\zeta_1})$$
Let
$$\mathcal{D}(f, g, k)(x) = \mathbb{E}_y f(x - P(y) - Q(y))g(x - Q(y))k(x - P(y)).$$
Then by Cauchy-Schwarz (see the discussion on \textit{stashing} in section 3), it follows that
$$|\Lambda(f, g, k, p)| \le \left(\mathbb{E}_x |\mathcal{D}(f, g, k)|^2\right)^{1/2} = |\Lambda(\overline{f}, \overline{g}, \overline{k}, \mathcal{D}(f, g, k))|^{1/2} \le \|\mathcal{D}(f, g, k)\|_{U^s}^{O(1)} + O(N^{-\zeta})$$
where the last inequality follows by invoking \thref{pet1}. By repeatedly using the \textit{stashing} argument, \thref{gowers} thus follows from the following:
\begin{theorem}[Degree Lowering]\thlabel{degreelowering}
Let $s \ge 2$. If
$$\|\mathcal{D}(f, g, k)\|_{U^{s + 1}} \ge \delta$$
then there exists some $c$ such that either $\delta \ll 1/(\log\log(N))^{c}$ or
$$\|p\|_{U^s} \ge \exp(-\exp((1/\delta)^{O(1)})).$$
\end{theorem}
We now begin the proof of \thref{degreelowering}. An important tool in the degree lowering argument is the following, due to Peluse \cite[Lemma 5.1]{P1} (see e.g. the proof of \cite[Lemma 6.3]{Pr1} for a more refined formulation):
\begin{lemma}[Dual-Difference Interchanging]\thlabel{dual}
Let $f_0 : \mathbb{Z}/N\mathbb{Z} \to \mathbb{C}$, $f_1, f_2, \dots, f_m: (\mathbb{Z}/N\mathbb{Z})^2 \to \mathbb{C}$ be 1-bounded functions, $F(x) = \mathbb{E}_yf_0(y)\prod_i f_i(x, y)$, and $s \ge 2$. Then
$$\|F\|_{U^s}^{2^{s + 1}} \le \mathbb{E}_{h} \|F_h\|_{U^{s - 1}}^{2^{s - 1}}$$
where
$$F_h(x) = \mathbb{E}_y \prod_i \Delta_h  f_i(x, y)$$
where if $h = (h_1, \dots, h_t)$, $\Delta_h f_i(x, y) = \prod_{\omega \in \{0, 1\}^t} C^{|\omega|} f_i(x + \omega \cdot h, y)$.
As a consequence, for $t \ge 0$,
$$\|F\|_{U^{t + 3}}^{2^{2t + 3}} \le \mathbb{E}_{h_1, \dots, h_t} \|F_h\|_{U^3}^8.$$
\end{lemma}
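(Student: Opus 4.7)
The plan is to prove the main inequality $\|F\|_{U^s}^{2^{s+1}} \le \mathbb{E}_h\|F_h\|_{U^{s-1}}^{2^{s-1}}$ via a single carefully placed Cauchy-Schwarz in the inner $y$-variables, and then to obtain the iterated consequence by repeatedly applying the main inequality to $F_{h_1,\ldots,h_j}$, which has the same structural form as $F$, interleaved with Cauchy-Schwarz to bump the left-hand exponent.

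For the main inequality I would begin with the standard identity
$$\|F\|_{U^s}^{2^s} = \mathbb{E}_{h_2,\ldots,h_s}\bigl|\mathbb{E}_x \Delta_{h_2,\ldots,h_s} F(x)\bigr|^2,$$
obtained from the recursive definition of the Gowers norm together with $\mathbb{E}_{x,h_1}\overline{G(x+h_1)}G(x) = |\mathbb{E}_xG|^2$. Writing $h^{-} = (h_2,\ldots,h_s)$ and expanding each of the $2^{s-1}$ copies of $F$ inside $\Delta_{h^{-}}F(x)$ with an independent variable $y_{\omega'}$ for each $\omega'\in\{0,1\}^{s-1}$, the $f_0$'s separate out as $\prod_{\omega'}C^{|\omega'|}f_0(y_{\omega'})$, a quantity that depends only on the $y_{\omega'}$'s and is pointwise bounded by $1$. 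A single Cauchy-Schwarz in the $(y_{\omega'})$ variables eliminates this factor, yielding
$$\bigl|\mathbb{E}_x\Delta_{h^{-}}F(x)\bigr|^2 \le \mathbb{E}_{(y_{\omega'})}\Bigl|\mathbb{E}_x \prod_{\omega',i} C^{|\omega'|} f_i(x+\omega'\cdot h^{-},y_{\omega'})\Bigr|^2.$$

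The decisive step is to expand the inner absolute value squared by introducing a fresh shift variable $h_1$, rewriting $|\mathbb{E}_xA(x)|^2 = \mathbb{E}_{x,h_1}A(x)\overline{A(x+h_1)}$. Each product $f_i(x+\omega'\cdot h^{-},y_{\omega'})\overline{f_i(x+h_1+\omega'\cdot h^{-},y_{\omega'})}$ collapses to $\Delta_{h_1}f_i(x+\omega'\cdot h^{-},y_{\omega'})$. Crucially, each $y_{\omega'}$ now appears in only one factor of the $\omega'$-product, so the independent averages $\mathbb{E}_{y_{\omega'}}$ commute past the outer $\prod_{\omega'}$ and the conjugations $C^{|\omega'|}$, and each factor becomes $F_{h_1}(x+\omega'\cdot h^{-})$. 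Integrating over $h^{-}$ reconstitutes $\mathbb{E}_{h_1}\|F_{h_1}\|_{U^{s-1}}^{2^{s-1}}$, and the stated inequality follows after noting $\|F\|_{U^s}\le 1$.

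For the iterated consequence I would apply the main inequality to $F_{h_1,\ldots,h_j}$, which is again of the form to which the lemma applies, with $\prod_i \Delta_{h_1,\ldots,h_j} f_i$ playing the role of the original $f_i$'s and the $f_0$-factor trivial. At each step I insert a Cauchy-Schwarz to double the exponent on the left; after $t$ iterations starting from $\|F\|_{U^{t+3}}$, the exponent grows from $2^{t+3}$ to $2^{2t+3}$ and the Gowers norm on the right descends from $U^{t+3}$ to $U^3$, yielding the claimed bound. The principal obstacle is the ordering: if the $(y_{\omega'})$-Cauchy-Schwarz is performed before the $|\cdot|^2$-expansion with $h_1$, one is left with $2^{s-1}$ decoupled $y$-averages that cannot be repackaged as the single inner average defining $F_{h_1}$. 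The proof hinges on aligning each $y_{\omega'}$ with its own $h_1$-shift first, so that the independent inner averages recombine into $F_{h_1}$.
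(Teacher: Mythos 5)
Your proof is correct, and at bottom it is the same single-Cauchy--Schwarz interchange as in the paper, but you deploy the Cauchy--Schwarz in a slightly different (and in fact more efficient) place. The paper expands the full $U^s$ average with an independent $y_\omega$ for every $\omega\in\{0,1\}^s$ and then Cauchy--Schwarzes away the factor consisting of all the $f_0(y_\omega)$'s together with the whole $\omega_s=0$ face; this duplicates $h_s$ into $h_s,h_s'$, costs a squaring (hence the exponent $2^{s+1}$), and the change of variables $h_s\mapsto h_s-h_s'$, $x\mapsto x-h_s'$ then recombines the shared-$y$ pairs into $F_h$. You instead start from the recursive identity $\|F\|_{U^s}^{2^s}=\mathbb{E}_{h_2,\dots,h_s}\bigl|\mathbb{E}_x\Delta_{h_2,\dots,h_s}F(x)\bigr|^2$, remove only the $f_0$'s by a Cauchy--Schwarz in the $(y_{\omega'})$ variables (harmless, since the left-hand side is already a square), and open the square with the fresh shift $h_1$ only afterwards, so that the two copies of each $f_i$ share the same $y_{\omega'}$ and recombine exactly into $F_{h_1}$. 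This yields the stronger bound $\|F\|_{U^s}^{2^s}\le\mathbb{E}_h\|F_h\|_{U^{s-1}}^{2^{s-1}}$, from which the stated inequality follows via $\|F\|_{U^s}\le 1$, as you observe; your iteration (apply the inequality to $F_{h_1,\dots,h_j}$, which has the same shape with trivial $f_0$, doubling the left exponent by Cauchy--Schwarz over the $h$'s) is the same as the paper's H\"older-plus-induction step, and the bookkeeping indeed lands on $2^{2t+3}$ and $\mathbb{E}_{h_1,\dots,h_t}\|F_h\|_{U^3}^8$ (with your sharper main inequality some of the doublings are not even needed). One small caveat: your closing remark about the ``ordering'' is stated backwards --- performing the $(y_{\omega'})$-Cauchy--Schwarz before introducing $h_1$ is exactly what you do and exactly what makes the recombination possible; what would fail is expanding in $h_1$ first, i.e.\ introducing all $2^s$ independent $y_\omega$, without duplicating a shift variable as the paper does. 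The displayed argument itself is correct.
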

\begin{proof}
We have
$$\|F\|_{U^s}^{2^{s}} = \mathbb{E}_{x, h_1, \dots, h_s} \mathbb{E}_{y \in \mathbb{Z}/N\mathbb{Z}^{\{0, 1\}^s}} \prod_i \prod_{\omega \in \{0, 1\}^s} C^{|\omega|} [f_i(x + \omega \cdot x, y_\omega)f_0(y_\omega)].$$
By Cauchy-Schwarz to eliminate the factor $[\prod_{\omega \in \{0, 1\}^s} f_0(y_\omega)][\prod_{\omega \in \{0, 1\}^s: \omega_s = 0} f_i(x + h \cdot \omega)]$, we have
$$\|F\|_{U^s}^{2^{s + 1}} \le \mathbb{E}_{x, y, h} \mathbb{E}_{h_s, h_s'} \prod_{\omega \in \{0, 1\}^{s - 1}} \prod_{i =1}^m C^{|\omega| - 1} [\overline{f_i(x + h \cdot \omega + h_s, y_\omega)} f_i(x + h \cdot \omega + h_s', y_\omega)].$$
Finally, making a change of variables $h_s \mapsto h_s - h_s'$ and $x \mapsto x - h_s'$ we have the desired inequality
$$\|F\|_{U^s}^{2^{s + 1}} \le \sum_h \|F_h\|_{U^{s - 1}}^{2^{s - 1}}.$$
To deduce the inequality
$$\|F\|_{U^{t + 3}}^{2^{2t + 3}} \le \mathbb{E}_{h_1, \dots, h_t} \|F_h(x)\|_{U^3}^8,$$
we use induction on $t$. This is obvious for $t = 0$. Suppose the above inequality holds for $t$. Then
$$\|F\|_{U^{t + 4}}^{2^{2t + 5}} = (\|F\|_{U^{t + 4}}^{2^{t + 5}})^{2^{t}}$$
We see that
$$\|F\|_{U^{t + 4}}^{2^{t + 5}} \le \mathbb{E}_h \|F_h\|_{U^{t + 3}}^{2^{t + 3}}$$
so by H\"older's inequality and induction,
$$(\|F\|_{U^{t + 4}}^{2^{t + 5}})^{2^{t}} \le \mathbb{E}_{h} \|F_h\|_{U^{t + 3}}^{2^{2t + 3}} \le \mathbb{E}_{h_1, \dots, h_{t + 1}} \|F_h\|_{U^{3}}^{8}.$$
\end{proof}
\begin{remark}
This lemma is actually a slightly stronger variant of \cite[Lemma 5.1]{P1}, where Peluse does not include the $f_0(y)$ term. The proofs of \thref{dual} and of \cite[Lemma 5.1]{P1} are essentially identical, though.
\end{remark}
Let the notation and hypotheses be as in \thref{degreelowering}. Let
$$\mathcal{D}_h(f, g, k)(x) = \mathbb{E}_{y} \Delta_hf(x - P(y) - Q(y))\Delta_h g(x - Q(y)) \Delta_h k(x - P(y)).$$
By \thref{dual}, we see that
$$\mathbb{E}_{h_1, \dots, h_{s - 2}} \|\mathcal{D}_h (f, g, k)\|_{U^3}^8 \gg \delta^{O(1)}.$$
By the pigeonhole principle, it follows that for at least $\delta^2$ of the $h$'s,
$$\|\mathcal{D}_h(f, g, k)\|_{U^3} \ge \delta^{O(1)}.$$
Applying the $U^3$ inverse theorem \thref{u3} (well technically \thref{u3mod}) and Fourier approximating via \thref{lipFourier} along the vertical torus, $\mathcal{D}_h(f, g, k)$ correlates with a nilsequence $F_h(m_h(n)\Gamma)$ with $F_h$ a nilcharacter of frequency $\xi_h$ with $|\xi_h| \le \delta^{-O(r)^{O(1)}}$, with the nilmanifold having dimension $r$ at most $\delta^{-O(1)}$. For any triple $(\ell_1, \ell_2, \ell_3)$, \thref{dualus} lets us approximate $F$ (uniformly in $\ell$) with a combination of dual functions
$$G_{h, \ell}^\alpha(x) = \prod_{\omega \in \{0, 1\}^3_*} C^{|\omega| - 1}F_{h, \omega}(m_h(x + \ell \cdot \omega)\Gamma)$$
where and each $F_{\omega, h}$ is a nilcharacter of frequency $\xi_h$, with the property that
$$\sum_\alpha \|G_{h, \ell}\|_{Lip} \le \exp(-\delta^{O(1)}).$$
By dividing by the Lipschitz norm, we may assume that $F_h$ and $F_{h, \omega}$ are of Lipschitz norm one. Thus, we have
\begin{equation}\label{returnpoint}
\mathbb{E}_{h_1, \dots, h_{s - 2}} |\langle \mathcal{D}_h(f, g, k), F_h(m_h(\cdot)\Gamma) \rangle|^8 \gg \exp(-\delta^{-O(1)})
\end{equation}
and so for coefficients $a_\alpha$
with the property that
$$\sum_\alpha |a_\alpha| \le \exp(-\delta^{O(1)}),$$
we obtain
$$\mathbb{E}_{h_1, \dots, h_{s - 2}} |\langle \mathcal{D}_h(f, g, k), \sum_{\alpha} a_\alpha G^\alpha_{h, \ell} \rangle|^8 \gg \exp(-\delta^{-C})$$
for some possibly large $C > 0$. The rest of the proof of \thref{degreelowering} will be carried out in the next section.
\section{Equidistribution of the Nilcharacter}
Let $\epsilon = \exp(-\delta^{-C})$. We separate each term in the sum into two cases: for each $h$, either
$$|\langle \mathcal{D}_h(f, g, k), G_{h, \ell} \rangle|^8 < \epsilon^2$$
or
$$|\langle \mathcal{D}_h(f, g, k), \sum_{\alpha}^{\alpha} a_\alpha G_{h, \ell}^\alpha \rangle|^8 \ge \epsilon^2.$$
The former case contributes a relatively small portion to the sum, so we only need to analyze the latter case. It is here where it is apparent why we work with a dual nilsequence instead of a nilsequence. Indeed, the identity
$$(x + P(y) + Q(y))^2 = (x + P(y))^2 + (x + Q(y))^2 - x^2 + 2P(y)Q(y)$$
is a reflection of the fact that if
$$\partial_{h_1, h_2, h_3} \phi(x) = 0$$
for all $x, h_1, h_2, h_3$, i.e.,
$$\phi(x + h_1 + h_2 + h_3) - \phi(x + h_1 + h_2) - \phi(x + h_1 + h_3) - \phi(x + h_2 + h_3) + \phi(x + h_1) + \phi(x + h_2) + \phi(x + h_3) - \phi(x) = 0$$
then
$$\phi(P(y) + Q(y)) - \phi(x + P(y) + Q(y)) - \phi(P(y)) - \phi(Q(y)) + \phi(x + P(y)) + \phi(x + Q(y)) + \phi(0) - \phi(x) = 0$$
by substituting $P(y)$ in place of $h_1$, $Q(y)$ in place of $h_2$, and $-x$ in place of $h_3$. Thus changing variables $x \mapsto x + P(y) + Q(y)$, we obtain
$$|\mathbb{E}_y \mathbb{E}_{x} \Delta_h f(x)\Delta_hg(x + P(y)) \Delta_h k(x + Q(y)) \sum_\alpha a_\alpha \prod_{\omega \in \{0, 1\}^3_*} C^{|\omega| - 1}F_{h, \omega}^\alpha(m_h(x + \ell \cdot \omega)\Gamma)| \ge \epsilon^{O(1)}.$$
Letting $\ell_1 = -P(y)$, $\ell_2 = -Q(y)$, and $\ell_3 = -x$, and using the triangle inequality and the pigeonhole principle (and dropping the $\alpha$ we pigeonhole to), we obtain
$$|\mathbb{E}_{x, y} f_1(x)g_1(x + P(y))k_1(x + Q(y))K(y)| \ge \epsilon^{O(r)^{O(1)}}$$
with
$$f_1(x) = F_{110, h}(m_h(x)\Gamma) \Delta_h f(x),$$
$$g_1(x) = \overline{F}_{010, h}(m_h(x)\Gamma) \Delta_h g(x),$$
$$k_1(x) = \overline{F}_{100, h}(m_h(x)\Gamma) \Delta_h k(x).$$
$$K(x) = F_{001, h}(m_h(P(y) + Q(y))\Gamma)\overline{F_{011, h}}(m_h(P(y))\Gamma)\overline{F_{101, h}}(m_h(Q(y))\Gamma)$$
By \thref{pet2}, we may bound the above by $\|k\|_{U^t}$ for some $t$. Letting
$$\mathcal{D}(f_1, g_1)(x) = \mathbb{E}_y f_1(x - Q(y))g_1(x + P(y) - Q(y))K(y)$$
it follows from Cauchy-Schwarz that
$$\|\mathcal{D}(f_1, g_1)\|_{U^t} \gg \epsilon^{O(r)^{O(1)}}.$$
By \thref{dual} and the $U^2$ inverse theorem (\thref{u2}), it follows that there exists $\alpha_h \in \widehat{\mathbb{Z}/N\mathbb{Z}}$ such that
$$\mathbb{E}_{h \in (\mathbb{Z}/N\mathbb{Z})^{t- 2}} |\mathbb{E} \Delta_{h} f_1(x - Q(y))\Delta_h g_1(x + P(y) - Q(y)) e(\alpha_h x) | \gg \epsilon^{O(r)^{O(1)}}.$$
This is where we use previous results on linearly independent progressions of \cite{BC}, \cite{DLS}, and \cite{P1}:
\begin{theorem}[\hspace{-0.01in}\cite{DLS}, \cite{P1}, \cite{P3}]\thlabel{peluse}
Let $f, g, p\colon \mathbb{Z}/N\mathbb{Z} \to \mathbb{C}$ be 1-bounded functions. Then
$$\mathbb{E}_{x, y}f(x)g(x + P(y))p(x + Q(y)) = \mathbb{E} f \mathbb{E} g \mathbb{E} p + O(N^{-\gamma})$$
for some $\gamma > 0$.  
\end{theorem}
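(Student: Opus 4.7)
The plan is to adapt Peluse's degree-lowering strategy to this three-term progression, proceeding in three stages. First (PET initial estimate), via the PET induction scheme of Bergelson--Liebman, applied iteratively by Cauchy--Schwarz in $y$, one establishes that there exist $s_0, c, \zeta > 0$ depending only on $P, Q$ such that for one-bounded $f, g, p$,
$$\left|\mathbb{E}_{x,y} f(x) g(x+P(y)) p(x+Q(y))\right| \ll \min(\|f\|_{U^{s_0}}, \|g\|_{U^{s_0}}, \|p\|_{U^{s_0}})^c + O(N^{-\zeta}).$$
This is the three-term analogue of \thref{pet1}; the PET scheme terminates precisely because $P$ and $Q$ are linearly independent, so that successive Weyl differencings strictly reduce the weight of the system.

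Second (degree lowering to $U^2$), starting from $U^{s_0}$ control, I would iteratively reduce the exponent down to $U^2$. If $\|f\|_{U^{s+1}} \ge \delta$ controls the count for some $s \ge 2$, then by the stashing argument and the dual-difference interchange \thref{dual} applied to the dual $\mathcal{D}(g, p)(x) = \mathbb{E}_y g(x + P(y)) p(x + Q(y))$, iterated derivatives of $f$ correlate with nilcharacters arising from the $U^{s+1}$ inverse theorem. A Weyl-type equidistribution analysis along $(P(y), Q(y))$ then forces the top-degree frequency of this nilcharacter to vanish, reducing the control to $U^s$. Iterating yields $U^2$ control with polynomial loss in $N$. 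For the base case, \thref{u2} combined with $\|f\|_{U^2} \ge \delta$ produces a Fourier character $e(\alpha x)$ correlating with $f$ at scale $\delta$; substituting into the count and using Weyl's inequality together with linear independence of $P, Q$ (vanishing at $0$) forces $\alpha \equiv 0 \pmod N$ up to a polynomial loss, so $|\mathbb{E} f| \gg \delta^{O(1)}$. Peeling off $\mathbb{E} f$ and cycling the argument through $g$ and $p$ yields the claimed $\mathbb{E} f \, \mathbb{E} g \, \mathbb{E} p + O(N^{-\gamma})$.

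The main obstacle is the degree-lowering step: maintaining polynomial decay through each iteration. Because the quantitative $U^{s+1}$ inverse theorem is only quasi-polynomial in quality for $s \ge 2$, a naive application loses polynomial decay in $N$. Peluse's insight is to avoid invoking the full inverse theorem and instead exploit the algebraic structure of the dual function $\mathcal{D}(g, p)$ together with a direct Weyl argument, bypassing the nilsequence analysis entirely and preserving polynomial quantitative bounds. Replicating this here is the main technical burden, but the absence of the extra term $x + P(y) + Q(y)$ makes the count strictly simpler than the four-term one handled in the bulk of the paper, and the polynomial-decay regime (as opposed to the iterated-log regime of \thref{count}) is precisely what justifies bypassing the higher inverse theorems.
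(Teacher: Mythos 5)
First, note that the paper does not prove \thref{peluse} at all: it is imported as a black box from \cite{DLS}, \cite{P1}, \cite{P3}, so there is no internal proof to compare against; your proposal has to be judged as a reconstruction of Peluse's argument. As such it names the right ingredients (PET, stashing, dual--difference interchange, Weyl sums over $(P(y),Q(y))$), but the core of the theorem --- degree lowering \emph{with polynomial losses} --- is not actually supplied. The mechanism you describe, namely applying the $U^{s+1}$ inverse theorem and forcing the top-degree frequency of the resulting nilcharacter to vanish, is the mechanism of the present paper's \thref{degreelowering} for the four-term pattern, and it cannot give the $O(N^{-\gamma})$ error claimed in \thref{peluse}: each invocation of a higher-order inverse theorem degrades the bound to quasi-polynomial or worse (this is exactly why \thref{count} only achieves $1/\log_{(O(1))}N$). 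You acknowledge this and assert that Peluse ``bypasses the nilsequence analysis,'' but you never say how, and that omitted step is the theorem. The actual mechanism is that at every stage only the $U^2$ inverse theorem (Fourier analysis) is used: after the interchange of \thref{dual}, each $\mathcal{D}_h$ correlates with a linear phase $e(\alpha_h x)$, and one then exploits the algebraic structure of $\mathcal{D}_h$ as an average over the progression --- Fourier-expanding the remaining functions and using the power-saving bound for the complete exponential sums $\mathbb{E}_y e(\xi P(y)+\eta Q(y))$, $(\xi,\eta)\neq(0,0)$, which is where linear independence and $P(0)=Q(0)=0$ enter --- to force $\alpha_h$ to be essentially $0$, i.e.\ $\|\mathcal{D}_h\|_{U^1}$ large, and hence to pass from $U^s$ to $U^{s-1}$ control with only polynomial loss. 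Without spelling out this step, the proposal proves (at best) a qualitative or iterated-logarithmic version of the statement, not the $N^{-\gamma}$ asymptotic.

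A smaller but genuine slip is in your base case: from $\|f\|_{U^2}\ge\delta$ and \thref{u2} you get correlation of $f$ with \emph{some} character $e(\alpha x)$, and nothing about $f$ alone forces $\alpha\equiv 0$; moreover correlation of $f$ with $e(\alpha x)$ does not let you simply ``substitute'' $e(\alpha x)$ for $f$ in the count. The trivial-frequency conclusion has to be extracted at the level of the dual function, via $\mathbb{E}_x \mathcal{D}(g,p)(x)e(-\alpha x)=\sum_{\xi}\hat g(\xi)\hat p(-\alpha-\xi)\,\mathbb{E}_y e(\xi P(y)-(\alpha+\xi)Q(y))$, which is $O(N^{-c})$ unless $\alpha=0$; only then does one conclude $|\mathbb{E}f|\gg\delta^{O(1)}$ (and similarly in the other positions) and obtain the splitting $\mathbb{E}f\,\mathbb{E}g\,\mathbb{E}p+O(N^{-\gamma})$ by decomposing each function into its mean plus a mean-zero part.
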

\begin{remark}
    Note that this theorem is a special case of \thref{poly}. We isolated this special case to highlight precisely which case of that theorem we are using.
\end{remark}
Thus, we may assume that $\alpha_h = 0$ up to an error of $N^{-\gamma}$ which is much smaller than any error we will obtain. It follows that
$$\|f_1\|_{U^{t - 1}} \gg \epsilon^{O(1)}.$$
Letting
$$\mathcal{D}(g_1, k_1)(x) = \mathbb{E}_y g_1(x + P(y))k_1(x + Q(y))K(y)$$
we have by the \textit{stashing} argument,
$$\|\mathcal{D}(g_1, k_1)\|_{U^{t - 1}} \gg \epsilon^{O(1)}.$$
Thus, by \thref{dual}, it follows that
$$\mathbb{E}_{h \in \mathbb{Z}/N\mathbb{Z}^{t - 2}} \|\mathcal{D}_h (g_1, k_1)\|_{U^1}^2 \gg \epsilon^{O(1)}.$$
This implies that
$$\|k_1\|_{U^{t - 1}} \gg \epsilon^{O(1)}.$$
We may thus iterate this procedure until
$$\|f_1\|_{U^{2}} \gg \epsilon^{O(1)}$$
and thus
$$\|k_1\|_{U^{2}} \gg \epsilon^{O(1)}$$
$$\|\mathcal{D}(f_1, g_1)\|_{U^2} \gg \epsilon^{O(1)}.$$
This time, however, we must account for the term $K(y)$. Thus, there exists $\alpha \in \widehat{\mathbb{Z}/N\mathbb{Z}}$ such that
$$|\mathbb{E}_{x, y} f_1(x)g_1(x + P(y))e(\alpha Q(y))K(y)| \gg \epsilon^{O(1)}.$$
By Fourier expanding $f_1$ and $g_1$ and applying the Cauchy-Schwarz inequality, it follows that
\begin{align*}
&|\mathbb{E}_{x, y} f_1(x)g_1(x + P(y))e(\alpha Q(y))K(y)| \\
&\le \left|\mathbb{E}_{x, y} \sum_{\beta \in \widehat{\mathbb{Z}/N\mathbb{Z}}} \widehat{f_1}(\beta) \overline{\widehat{g_1}}(\beta) e(\alpha Q(y) + \beta P(y))K(y)\right| \\
&\le \max_{\beta \in \widehat{\mathbb{Z}/N\mathbb{Z}}} |\mathbb{E}_y e(\alpha Q(y) + \beta P(y))K(y)|
\end{align*}
It follows that there exists some $\beta \in \widehat{\mathbb{Z}/N\mathbb{Z}}$ such that
$$|\mathbb{E}_y e(\alpha Q(y) + \beta P(y))K(y)| \gg \epsilon^{O(1)}.$$
\begin{lemma}[Equidistribution Lemma]\thlabel{equidistribution}
Let $F_1, F_2, F_3$ be nilcharacters of frequencies $\xi$ with $|\xi| \le \delta^{-O(r)^{O(1)}}$ and each having Lipschitz norm at most $\delta^{-O(r)^{O(1)}}$ on an $r$-dimensional two-step nilmanifold $G/\Gamma$. Let $g(n)$ be a degree two polynomial sequence with $g(n)\Gamma$ periodic modulo $N$ and $g(0) = \mathrm{id}_G$. Suppose for $\alpha, \beta \in \widehat{\mathbb{Z}/N\mathbb{Z}}$, 
$$\left|\mathbb{E}_{y \in \mathbb{Z}/N\mathbb{Z}} e(\alpha Q(y) + \beta P(y)) F_1(g(P(y) + Q(y)))\overline{F_2}(g(P(y)))\overline{F_3}(g(Q(y)))\right| \ge \delta.$$
Then at least one of the following occurs:
\begin{itemize}
    \item $N \ll \delta^{-\exp(O(r)^{O(1)})}$;
    \item or $g(n)$ lies on a $\delta^{-\exp(O(r)^{O(1)})}$-rational subgroup $H$ of $G$ such that $\xi([H, H]) = 0$.
\end{itemize}
\end{lemma}
\begin{remark}
A consequence of this lemma is that by taking a quotient by $\mathrm{ker}(\xi)$ (and denoting $\tilde{\bullet}$ the images), $\tilde{F}(\tilde{g(n)}\tilde{\Gamma})$ a nilsequence on an abelian nilmanifold and since $H$ is $\delta^{-\exp(O(r)^{O(1)})}$-rational, the restriction of $\tilde{F}$ to $\tilde{H}$ has Lipschitz norm at most $\delta^{-\exp(O(r)^{O(1)})}$.     
\end{remark}

\begin{proof}
We shall prove this via induction on $r$. If $r = 1$, then the nilmanifold is automatically abelian, so we are done. If $\xi = 0$, then we are automatically done since $F_i(g(n)\Gamma)$ lies in a degree one nilmanifold. First, we shall eliminate that $e(\beta P(y))$ and $e(\alpha Q(y))$ terms. To do so, pick two elements $h_1, h_2 \in [G, G]$ whose Mal'cev coordinates are rational with denominator $N$ such that $\xi(h_1) = \alpha$, $\xi(h_2) = \beta$. We shall then modify $F_1(g(n)\Gamma)$ to $F_1(\tilde{g}_1(y)\Gamma)$ and $F_2(g_2(y))$ to $F_2(\tilde{g}_2(y))$ by inserting $h_1^n$ and $h_2^n$ into the polynomial sequence $g$. The projection of $g$, $\tilde{g}_1$, and $\tilde{g}_2$ to $G/[G, G]$ are all equal because $h_1, h_2 \in [G, G]$. Thus, $g/\mathrm{ker}(\xi)$ lies in a degree one nilmanifold if and only if $\tilde{g}_1/\mathrm{ker}(\xi)$ and $\tilde{g}_2/\mathrm{ker}(\xi)$ lie in a degree one nilmanifold. Observe that $(g(P(y) + Q(y))\Gamma, \tilde{g}_1(P(y))\Gamma, \tilde{g}_2(Q(y))\Gamma)$ lies in the subnilmanifold $H/\Lambda$ where
$$H = \{(g_1, g_2, g_3) \in G^3: g_1^{-1} g_2 g_3 \in [G, G]\}, \hspace{0.1in} \Lambda = H \cap \Gamma^3.$$
We claim that $[H, H] = [G, G]^3$. By definition, we see that for $h \in [G, G]$ that $(1, h, h)$ and $(h, 1, h)$, and $(h, h, h^4)$ lies inside $[H, H]$ (the last fact is true because $[(g_1, g_1, g_1^2), (h_1, h_1, h_1^2)] = ([g_1, h_1], [g_1, h_1], [g_1, h_1]^4)$). This yields that $(1, 1, h^2)$ lies inside $[H, H]$, and because of connectedness and simple connectedness, it follows that $(1, 1, h) \in [H, H]$. We can verify from there that $[H, H] = [G, G]^3$. Letting $T$ be the Horizontal torus of $G/\Gamma$, it follows that the horizontal Torus of $H/\Lambda$ is isomorphic to $\{(x, y, z) \in \mathbb{T}^3: x + y = z\}$. It follows that
$$\int_{H/\Lambda} F_1 \otimes \overline{F_2 \otimes F_3}(h) dh = 0$$
since $\xi \neq 0$. Thus, by quantitative equidistribution on nilmanifolds (\thref{quantnil}), it follows that there exists horizontal characters $\eta_1, \eta_2$ on $G$, at least one of which is nonzero and whose Lipschitz constants are at most $\delta^{-\exp(O(r)^{O(1)})}$, such that 
$$\|\eta_1 \circ \tilde{g}_1 \circ P + \eta_2 \circ \tilde{g}_2 \circ Q\|_{C^\infty[N]} \le \delta^{-\exp(O(r)^{O(1)})}.$$
Since $\tilde{g}_1$ and $\tilde{g}_2$ have the same horizontal torus components as $g$, we have
$$\|\eta_1 \circ g \circ P + \eta_2 \circ g \circ Q\|_{C^\infty[N]} \le \delta^{-\exp(O(r)^{O(1)})}$$
and in fact, as observed by \thref{quant}, since $\tilde{g}_1$ and $\tilde{g}_2$ are both periodic modulo $N$, we have
$$\|\eta_1 \circ \tilde{g}_1 \circ P + \eta_2 \circ \tilde{g}_2 \circ Q\|_{C^\infty[N]} = 0$$
so
$$\|\eta_1 \circ g \circ P + \eta_2 \circ g \circ Q\|_{C^\infty[N]} = 0.$$
Since $P$ and $Q$ are linearly independent, there exists two binomial coefficients ${x \choose d}$ and ${x \choose e}$ such that the coefficients $c_{P, d}$, $c_{P, e}$, $c_{Q, d}$, $c_{Q, e}$ of $P$ and $Q$ of ${x \choose d}$ and ${x \choose e}$ form an invertible matrix. This does not mean that the matrix is invertible over $\mathbb{Z}$, but this does mean that if
$$\eta_1\circ gc_{P, d} + \eta_2 \circ gc_{Q, d} \equiv 0 \pmod{1}$$
$$\eta_1 \circ g c_{Q, e} + \eta_2 \circ g c_{Q, e} \equiv 0 \pmod{1}$$
then a multiple of $\eta_1 \circ g$ and $\eta_2 \circ g$, which only depends on $P$ and $Q$, are both zero modulo $1$. Absorbing them both into $\eta_1$ and $\eta_2$, we have demonstrated the existence of horizontal characters $\eta_1, \eta_2$ with $0 \le |\eta_i| \le \delta^{-\exp(O(r)^{O(1)})}$, at least one of which is nonzero, such that
$$\|\eta_1 \circ g \circ P\|_{C^\infty[N]} = \|\eta_2 \circ g \circ Q\|_{C^\infty[N]} = 0.$$ 
Thus, the orbit $(g(n)\Gamma)_{n \in \mathbb{Z}/N\mathbb{Z}}$ lies on some lower dimensional submanifold of $G/\Gamma$. We want the orbit to lie on a lower dimensional nilmanifold, but there is a slight technical obstruction: the ``periodic part'' of the nilsequence can be nonzero. More specifically, $\eta_i \circ g$ can be nonzero. To overcome this technicality, we split $g(y) = g_1(y) g_2(y)$ and $g_1(y)$ lying on the lower dimensional subgroup $G^1$ where $G^1 = \{g \in G: \eta_i \circ g = 0\}$ whose discrete subgroup is $\Gamma^1 = \Gamma \cap G^1$, and $g_2(y)$ is $\delta^{-\exp(O(r)^{O(1)})}$-rational. To eliminate the rational part, we make a change of variables $y \mapsto ay$ with $|a| \ll \delta^{-\exp(O(r)^{O(1)})}$ such that $g_2(ay) \in \Gamma$. This preserves the sum since $g(y)\Gamma = g(y \pmod{N})\Gamma$. We have thus converted our nilsequence $F_1 \otimes \overline{F_2 \otimes F_3}(g(P(y) + Q(y)), g(P(y)), g(Q(y)))$ to a nilsequence $F_1 \otimes \overline{F_2 \otimes F_3}(g_1(aP(y)+ aQ(y)), g_1(aP(y)), g_1(aQ(y)))$ on a lower dimensional nilmanifold $H^1/\Lambda^1$ of complexity at most $\delta^{-\exp(O(r)^{O(1)})}$ with
$$H^1 = \{(g_1, g_2, g_3) \in (G^1)^3: g_1^{-1} g_2 g_3 \in [G^1, G^1]\}, \hspace{0.1in} \Lambda^1 = H^1 \cap (\Gamma^1)^3.$$
If $G^1$ is not abelian, then the commutator subgroup $[G^1, G^1]$ is a nontrivial subgroup of $[G, G]$, so $F_i$ is still a nilcharacter of frequency $\xi$. If $\xi$ annihilates $[G^1, G^1]$, then we are done. Otherwise, we argue as before, picking $h_1, h_2 \in [G^1, G^1]$ with denominator $N$ such that $\xi(ah_1) = \alpha$, $\xi(ah_2) = \beta$ and absorbing the $e(\alpha P(y))$ term into $F_2(g_1(aP(y))\Gamma^1)$ and the $e(\beta Q(y))$ term into $F_3(g_1(aQ(y))\Gamma^1)$ and repeating the arguments. Note that since $F$ is $\Gamma$ invariant, it is $\Gamma^1$-invariant, so $F_i(\cdot \Gamma^1)$ makes sense. In addition, while $g_1(a \cdot) \Gamma$ is evidently periodic modulo $N$, one can observe that $g_1(a \cdot) \Gamma^1$ is also periodic modulo $N$. To see this, observe that for two $g_1, g_2 \in G^1$ which descend to the same element in $G/\Gamma$ satisfies $g_1 \gamma_1 = g_2$ for some $\gamma_1 \in \Gamma$. From this, one can easily see that $\gamma_1 \in \Gamma^1$ since $g_1, g_2 \in G^1$.  \\\\
Reasoning inductively, and using the fact that from \thref{lip} that $F_i$ are Lipschitz of norm $\delta^{-\exp(O(r)^{O(1)})}$ on the lower dimensional nilmanifolds it follows that $g(an)\Gamma = g_1(an)\Gamma$ with $g_1(n)\Gamma^1$ eventually being contained in a degree one nilmanifold $G^1/\Gamma^1$ of complexity at most $\delta^{-\exp(O(r)^{O(1)})}$ and $a \ll \delta^{-\exp(O(r)^{O(1)})}$ (see argument below for discussion on bounds from the induction). Letting $n \equiv bk \pmod{N}$ with $b \equiv a^{-1} \pmod{N}$, we see that $g(k)\Gamma = g_1(abk)\Gamma = g_1(abk)\Gamma$. This gives the desired result since a $\Gamma$-invariant function is automatically $\Gamma^1$-invariant. \\\\
Here, we provide an analysis of the bounds from the induction. Let $(L_d, M_d)$ be the Lipschitz norm of $F$ and the complexity of the nilmanifold we are working with at stage $d$ of the induction. Analyzing the induction, we have $M_{d + 1} = M_d (\delta/L_d)^{-\exp(cr^{C})}$ and by \thref{lip}, $L_{d} = M_{d}^{C_1}$ for absolute constants $c, C, C_1$ and thus $M_{d + 1} = M_d^{1 + \exp(cr^C)} \delta^{-\exp(r^C)} \le M_d^{\exp(cr^{C_2})}$ for $d > 2$. Since we have $M_0 = O(1)$ and our induction is carried out $r_1 = O(r)$ times, this gives $M_{r_1} = (\delta/M_0)^{-\exp(cr^{C_2})^{r_1}} = (\delta/M_0)^{-\exp(r_1 r^{C_2})}$, which is indeed $\delta^{-\exp(O(r)^{O(1)})}$. The constant $a$ is at most $M_{d + 1}^{C_3r}$ for some constant $C_3$, so as long as $a$ is less than $N$, it is relatively prime to $N$, and the argument carries out. This occurs whenever $\delta^{-\exp(O(r)^{O(1)})}$ is much less than $N$.
\end{proof}
Thus, by \thref{equidistribution}, $m_h$ lies on a nilpotent Lie group $H_h$ with the property that 
$$\xi_h([H_h, H_h]) = 0.$$
Thus, $F_h(m_h(n)\Gamma)$ is a $\epsilon^{-\exp(O(r)^{O(1)})}$-Lipschitz nilsequence of degree two on a one-step, complexity $\epsilon^{-\exp(O(r)^{O(1)})}$, and dimension $O(r)^{O(1)}$ manifold. Returning to (\ref{returnpoint}), we may thus Fourier expand via \thref{lipFourier} to obtain that there exists some phases $\alpha_h, \beta_h \in \widehat{\mathbb{Z}/N\mathbb{Z}}$ such that
\begin{equation}\label{finishingup}
\mathbb{E}_h |\langle \mathcal{D}_h (f, g, k), e(\beta_h x^2 + \alpha_h x) \rangle|^8 \gg \epsilon^{\exp(O(r)^{O(1)})}. 
\end{equation}
Thus, writing
$$\beta_h (x + P(y) + Q(y))^2 = \beta_h ((x + P(y))^2 + \beta_h(x + Q(y))^2 - \beta_h x^2 + 2\beta_h P(y)Q(y))$$
and
$$\alpha_h(x + P(y) + Q(y)) = \alpha_h(x + P(y)) + \alpha_h(x + Q(y)) - \alpha_h x,$$
we have from \ref{finishingup} that
$$\mathbb{E}_{h_1, \dots, h_{s - 1}} |f_2(x) g_2(x + P(y)) k_2(x + Q(y)) e(\beta_h 2P(y)Q(y))|^8 \gg \epsilon^{\exp(O(r)^{O(1)})}$$
where
$$f_2(x) = \Delta_h f(x) e(-\beta_h x^2 - \alpha_h x), \hspace{0.1in} g_2(x) = \Delta_h g(x) e(\beta_h x^2 + \alpha_h x), \hspace{0.1in} k_2(x) = \Delta_h k(x) e(\beta_h x^2 + \alpha_h x).$$
By \thref{poly}, it follows that the above is negligible if $\beta_h$ is nonzero. Thus, we may assume that $\beta_h = 0$, so by \thref{peluse}, we may bound the left hand side above by
$$\mathbb{E}_{h_1, \dots, h_{s - 2}}  |\mathbb{E}_x \Delta_h f(x)e(\alpha_{h} x)|^4.$$
Applying Cauchy-Schwarz (i.e. observing that $|\langle f, e(\alpha x) \rangle| \le \|f\|_{U^2}$), it follows that
$$\|f\|_{U^s} \gg \epsilon^{\exp(O(r)^{O(1)})}.$$
Defining
$$\mathcal{D}(g, k, p)(x) = \mathbb{E}_y g(x + P(y))k(x + Q(y))p(x + P(y) + Q(y))$$
it follows from Cauchy-Schwarz (the \textit{stashing} argument) that
$$|\Lambda(f, g, k, p) |^2 \le \mathbb{E}_{x} \mathcal{D}(g, k, p) \overline{\mathcal{D}(g, k, p)}$$
$$\|\mathcal{D} (g, k, p)\|_{U^s} \gg \epsilon^{\exp(O(r)^{O(1)})}$$
so by \thref{dual}, there exists $\beta_h$ such that
$$\mathbb{E}_{h_1, \dots, h_{s - 2}} |\langle \mathcal{D}_h (g, k, p)(x), e(\beta_h x) \rangle|^2 \gg \epsilon^{\exp(O(r)^{O(1)})}$$
Writing $\beta_h x = \beta_h(x + P(y)) + \beta_h(x + Q(y)) - \beta_h(x + P(y) + Q(y))$, it follows that
$$\mathbb{E}_{h_1, \dots, h_{s - 2}} |\mathbb{E}_{x, y} \Delta_h g(x + P(y)) e(\beta_h (x + P(y))) \Delta_h k(x + Q(y)) e(\beta_h (x + Q(y))) \Delta_h p(x + P(y) + Q(y)) $$
$$e(-\beta_h(x + P(y) + Q(y)))| \gg \epsilon^{\exp(O(r)^{O(1)})}.$$
Applying \thref{peluse} once again, this is bounded by
$$\mathbb{E}_{h_1, \dots, h_{s - 2}} |\mathbb{E}_{x} e(-\beta_h(x)) p(x)|^2$$
and using Cauchy-Schwarz, it follows that
$$\|p\|_{U^s} \gg \epsilon^{\exp(O(r)^{O(1)})}.$$
This completes the proof of \thref{degreelowering} and thus the proof of \thref{gowers}.

\appendix

\section{Fourier expansion and the $U^3$ inverse theorem}
One key tool we use to deal with Lipschitz functions on nilmanifolds is the following Fourier expansion lemma. 
\begin{lemma}\thlabel{lipFourier}
Let $f$ be a Lipschitz function on a Torus $\mathbb{T}^d$ with Lipschitz constant $L$ and let $\epsilon > 0$. Then there exists a function $g$ on $\mathbb{T}^d$ and a constant $C$ such that
$$g(x) = \sum a_i e(n_i x)$$
with
$$\sum_i |a_i| < \frac{C^{d^2} L^d}{\epsilon^d}$$
and
$$\|f(x) - g(x)\|_\infty \le \epsilon.$$
\end{lemma}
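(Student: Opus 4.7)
The plan is to run a standard two-step mollification-and-truncation argument on the torus: first smooth $f$ by convolving with a compactly supported bump, then truncate the resulting Fourier series. Since $f$ is Lipschitz, the oscillation of $f$ is at most $L\sqrt{d}$, so after subtracting the mean (which contributes one additional coefficient of size $|\hat f(0)| \le \|f\|_\infty$) we may assume $\|f\|_\infty \le O(L\sqrt{d})$, which lets us control the resulting $\ell^1$ sum.

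Concretely, I would fix once and for all a smooth nonnegative function $\phi$ on $\mathbb{R}^d$ with $\int \phi = 1$ and $\mathrm{supp}(\phi) \subseteq [-1,1]^d$, and set $\phi_\delta(x) = \delta^{-d}\phi(x/\delta)$, viewed as a function on $\mathbb{T}^d$ by periodization (legitimate once $\delta$ is smaller than the injectivity radius). Define $g_\delta = f * \phi_\delta$. The Lipschitz bound gives, pointwise,
\[
|g_\delta(x) - f(x)| \le \int |f(x-y)-f(x)|\,\phi_\delta(y)\,dy \le L\delta\sqrt{d},
\]
so choosing $\delta \sim \epsilon/(L+1)$ ensures $\|f - g_\delta\|_\infty \le \epsilon/2$.

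Next I would Fourier-expand $g_\delta(x) = \sum_{n \in \mathbb{Z}^d} \hat f(n)\hat\phi(\delta n)\,e(n\cdot x)$ and truncate to frequencies $|n|_\infty \le M$, producing a trigonometric polynomial $g$. Because $\phi$ is Schwartz, $\hat\phi$ decays faster than any polynomial, so I can pick $M$ a suitable multiple of $\delta^{-1}$ times $\log(1/\epsilon)$ and use $|\hat f(n)| \le \|f\|_\infty \le O(L+1)$ together with the rapid decay of $\hat\phi$ to force the tail $\|g_\delta-g\|_\infty \le \sum_{|n|_\infty>M}|\hat f(n)\hat\phi(\delta n)|$ to be at most $\epsilon/2$. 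The $\ell^1$ sum of coefficients of the retained trigonometric polynomial is bounded by $\|f\|_\infty \sum_{|n|_\infty \le M} |\hat\phi(\delta n)|$, and the latter sum is a Riemann sum for $\delta^{-d}\int|\hat\phi|$, hence of order $(C/\delta)^d$ for an absolute constant $C$ depending only on $\phi$.

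The routine calculations are the Lipschitz error estimate, the Schwartz tail bound, and the Riemann-sum comparison; none of these presents a genuine obstacle. The only real subtlety is bookkeeping the dependence on the parameters $L, \epsilon, d$ so that the final $\ell^1$ bound comes out in the claimed form $O(C^d(\epsilon(L+1))^{-1})$ after the choice $\delta \sim \epsilon/(L+1)$. The main technical care is to verify that the truncation level $M$ can be chosen with only a logarithmic overhead so that the Schwartz decay absorbs the combinatorial factor coming from counting lattice points in $[-M,M]^d$, which is what allows the $d$-dependence to be packaged into the $C^d$ factor rather than degrading the $\epsilon^{-1}$ rate.
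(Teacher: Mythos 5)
Your overall strategy (smooth at scale $\delta$, then truncate the Fourier expansion) is the same in spirit as the paper's, which convolves with the $d$-dimensional Fej\'er kernel $K_R$ so that smoothing and truncation happen in one step. The genuine problem is the final $\ell^1$ bound, and it is not, as you suggest, a matter of bookkeeping. With a mollifier supported at spatial scale $\delta$, the Lipschitz error estimate forces $\delta \lesssim \epsilon/(L+1)$ (in fact $\delta \sim \epsilon/(L\sqrt d)$), and then, exactly as your own Riemann-sum computation shows, the retained coefficients satisfy only
$$\sum_i |a_i| \;\le\; \|f\|_\infty \sum_{|n|_\infty \le M} |\hat\phi(\delta n)| \;\sim\; \|f\|_\infty\, C^d \delta^{-d} \;\sim\; \big(C(L+1)/\epsilon\big)^{d},$$
up to the further factor $\|f\|_\infty \lesssim L\sqrt d$ from bounding each $|\hat f(n)|$ by the sup norm. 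This is of the shape $((L+1)/\epsilon)^{O(d)}$, not the shape claimed in the lemma, where $(L+1)$ and $\epsilon^{-1}$ enter only to the first power and only an absolute constant is raised to the $d$-th power. No choice of the truncation level $M$ and no logarithmic savings from the Schwartz decay can repair this: once you insist on pointwise error $\epsilon$ via convolution at scale $\delta\sim\epsilon/(L+1)$ and bound coefficients merely by $\|f\|_\infty$, the number of frequencies you must retain is genuinely of order $\delta^{-d}$, so the $d$-dependence cannot be ``packaged into $C^d$'' as you hope.

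For comparison, the paper reaches the stated bound by asserting the much stronger approximation rate $\|F_R f - f\|_\infty \le C^d(L+1)R^{-d}$ for the Fej\'er means, which permits the cutoff $R \sim (C(L+1)/\epsilon)^{1/d}$ and hence only about $C^d(L+1)/\epsilon$ lattice points in the frequency box (note also the mismatch between the lemma's $(\epsilon(L+1))^{-1}$ and the proof's $(L+1)\epsilon^{-1}$, evidently a typo). Your mollifier only yields the $R^{-1}$-type rate $L\delta$, and that is precisely the source of the discrepancy. If you want to improve your bound without appealing to an $R^{-d}$ rate, the natural move is to replace the crude bound $|\hat f(n)|\le\|f\|_\infty$ by the Lipschitz decay $|\hat f(n)| \ll L/|n|_\infty$, which brings the $\ell^1$ sum down to roughly $C^d(L R^{d-1}+\|f\|_\infty)$ --- better, but still not first power in $(L+1)/\epsilon$ for $d\ge 2$. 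So as written your proposal proves a weaker statement of the form $\sum_i|a_i| \le (C(L+1)/\epsilon)^{O(d)}$ (which, to be fair, would suffice for every downstream use in the paper, where such quantities are absorbed into bounds of the type $\delta^{-\exp(O(r)^{O(1)})}$), but it does not prove the lemma as stated; the missing ingredient is exactly the dimension-free exponent on $(L+1)/\epsilon$, and that is a real quantitative gap rather than bookkeeping.
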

\begin{proof}(\hspace{-0.01in}\cite[Proposition 1.1.13]{T1})
Let
$$K_R(x_1, \dots, x_d) = \prod_{j = 1}^d \frac{1}{R}\left(\frac{\sin(\pi R x_j)}{\sin(\pi x_j)}\right)^2$$
be the Fej\'er kernel and let
$$F_Rf(x) = \sum_{k} m_R(k_1, \dots, k_d) \hat{f}(k) e(k \cdot x)$$
with
$$m_R(k) = \prod_{j = 1}^d \left(1 - \frac{|k_j|}{R}\right)_+.$$
Then there exists some constant $C$ such that
$$|K_R(x)| \le C^d \prod_{j = 1}^d R(1 + R\|x_j\|_{\mathbb{R}/\mathbb{Z}})^{-2}.$$
Since
$$\int_{\mathbb{R}^d/\mathbb{Z}^d} K_R(x) dx = 1$$
and
$$F_Rf(x) = \int_{\mathbb{R}^d/\mathbb{Z}^d} K_R(x - y) f(y)$$
it follows that
$$\|F_Rf(x) - f(x)\|_\infty \le dCLR^{-1/2}.$$
Choosing $R$ to be roughly $\frac{(dC L)^2}{\epsilon}$, and using the fact that $m_R\le 1$, it follows that
$$\|F_Rf(x) - f(x)\|_\infty \le \epsilon$$
$$\sum_i |a_i| \le \frac{C^{2d} L^{2d}}{\epsilon^d}.$$
\end{proof}
The form the $U^3$ inverse theorem we will use is the following \cite[Theorem 1.10]{JT} (see also \cite[Theorem 10.4]{GT4}):
\begin{theorem}\thlabel{u3}
Let $G$ be a finite additive group, $0 < \eta \le \frac{1}{2}$, and let $f: G \to \mathbb{C}$ be a 1-bounded function with
$$\|f\|_{U^3(G)} \ge \eta.$$
Then there exists a natural number $N \ll \eta^{-O(1)}$, a polynomial map $g: G \to H(\mathbb{R}^N)/H(\mathbb{Z}^N)$, and a Lipschitz function $F: H(\mathbb{R}^N)/H(\mathbb{Z}^N) \to \mathbb{C}$ of norm $O(\exp(\eta^{-O(1)}))$ such that
$$|\mathbb{E}_{x \in G} f(x) \overline{F}(g(x))| \gg \exp(-\eta^{-O(1)}).$$
\end{theorem}
\begin{remark}
Here, $H(\mathbb{R}^N)/H(\mathbb{Z}^N)$ is essentially an $O(N)$-dimensional Heisenberg nilmanifold and is defined in \cite{JT}.   
\end{remark}
Since $H(\mathbb{R}^N)$ is in fact three-step and not two-step, we must make some additional modifications to ensure that we do obtain a two-step nilsequence. The key observation is that denoting $(H(\mathbb{R}^N)_i)_{i = 0}^2$ to be the filtration as specified in \cite{JT}, the orbit $g$ lies on a coset of $H(\mathbb{R}^N)_1$, which is two-step. Invoking \cite[Lemma A.14]{GT2}, we write $g = \{g\}[g]$ be the decomposition where $[g] \in \Gamma$ and $\{g\}$ has Mal'cev coordinates lying in $[0, 1)$. We may thus decompose $g(n) = \{g(0)\}^{-1}g'(n)[g(0)]$ for some $g'$ a polynomial sequence of degree two on $H(\mathbb{R}^N)_1$, and we can modify the Lipschitz function by $\tilde{F} = F(\{g(0)\} \cdot)$ so $F(g(n)\Gamma) = \tilde{F}(g'(n)\Gamma)$, which restricts to a function on $H(\mathbb{R}^N)_1$. Thus, we have obtained a two-step nilsequence of dimension $\eta^{-O(1)}$ and complexity $\exp(\eta^{-O(1)})$ which correlates with $f$. We record this result below as follows.
\begin{theorem}\thlabel{u3mod}
Let $G$ be a finite additive group, $0 < \eta \le \frac{1}{2}$, and let $f: G \to \mathbb{C}$ be a 1-bounded function with
$$\|f\|_{U^3(G)} \ge \eta.$$
Then there exists a natural number $N \ll \eta^{-O(1)}$, a polynomial map $g: G \to H(\mathbb{R}^N)_1/H(\mathbb{Z}^N)_1$, and a Lipschitz function $F: H(\mathbb{R}^N)_1/H(\mathbb{Z}^N)_1 \to \mathbb{C}$ of Lipschitz norm $O(\exp(\eta^{-O(1)}))$ such that
$$|\mathbb{E}_{x \in G} f(x) \overline{F}(g(x))| \gg \exp(-\eta^{-O(1)}).$$
\end{theorem}

The $U^3$ inverse theorem implies that if $\|f\|_{U^3} \ge \delta$, then there exists a nilsequence $F(g(n)\Gamma)$ such that
$$|\mathbb{E}_{x} f(x)F(g(n)\Gamma)| \ge \exp(-\delta^{-O(1)}).$$
We can Fourier approximate $F$ along the vertical torus by invoking \thref{lipFourier} to obtain that $f$ correlates with a nilcharacter $F_\xi$ of frequency $\xi$ bounded by $\exp(\eta^{-O(1)})$ with $F_\xi$ being Lipschitz of norm at most $\exp(\eta^{-O(1)})$. \\\\
However, for our application, it is useful for our function to correlate with a dual-like $U^3$ nilcharacter rather than just a nilsequence on a two-step nilmanifold. Of course, we can always approximate a Lipschitz function as a sum of nilcharacters via \thref{lipFourier}. We shall demonstrate below that in some sense, an $s$-step nilcharacter is equal to its $U^{s + 1}$-dual. We will need the following lemma:

\begin{lemma}\thlabel{word}
Let $G$ be a $s$-step nilpotent Lie group and $\Gamma$ a discrete co-compact subgroup. For each $(x_\omega)_{\omega \in \{0, 1\}^{s + 1}}$ in $HK^{s + 1}(G/\Gamma)$, there exists a smooth function $P$ of the coordinates $(x_{\omega})_{\omega \in \{0, 1\}^{s + 1}_*}$ with $P((x_{\omega})_{\omega \in \{0, 1\}^{s + 1}_*}) = x_{0^{s + 1}}$.
\end{lemma}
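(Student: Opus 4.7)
The plan is to exploit the standard face parametrization of the Host--Kra cube group. Every element $(g_\omega)_{\omega \in \{0,1\}^{s+1}} \in HK^{s+1}(G)$ admits a unique expression in some fixed Mal'cev ordering as
$$g_\omega \;=\; \prod_{F \subseteq \{1,\ldots,s+1\}} h_F^{\,\mathbf{1}(F \subseteq \mathrm{supp}(\omega))}, \qquad h_F \in G_{|F|},$$
where the generators $h_F$ range over the faces of the cube. Since $G$ is $s$-step, $G_{s+1} = \{e\}$ and the top generator $h_{\{1,\ldots,s+1\}}$ is forced to be trivial. This is the unique nontrivial constraint among the $2^{s+1}$ corners and it is precisely what lets one corner be recovered smoothly from the others. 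Concretely I would show that the projection $\pi\colon HK^{s+1}(G) \to G^{\{0,1\}^{s+1}_*}$ forgetting the $\omega = 0^{s+1}$ coordinate is an injective smooth group homomorphism; since both source and target then have the same Lie-algebra dimension $\sum_{k=0}^{s}\binom{s+1}{k}\dim G_k$, $\pi$ is a diffeomorphism onto its image, and composing the smooth inverse with the evaluation at $\omega = 0^{s+1}$ yields the desired smooth $P$.

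Injectivity of $\pi$ reduces, by translation, to showing that any $(z_\omega) \in HK^{s+1}(G)$ with $z_\omega = e$ for every $\omega \ne 0^{s+1}$ must satisfy $z_{0^{s+1}} = e$. To see this, write $z$ in the face parametrization and read off the constraints $z_{\mathbf{1}_F} = e$ for each nonempty $F$ in order of increasing $|F|$. Each equation determines the new generator $h_F$ in terms of $h_\emptyset = z_{0^{s+1}}$ and the generators of smaller $|F|$ already pinned down: in the abelian model this is exactly Möbius inversion giving $h_F = (-1)^{|F|} h_\emptyset$, and in the nonabelian setting the same formula holds modulo iterated commutators of strictly earlier generators. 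Running this up to $F = \{1,\ldots,s+1\}$ and invoking the forced identity $h_{\{1,\ldots,s+1\}} = e$ forces $h_\emptyset = e$, hence $z_{0^{s+1}} = e$, as required. The construction descends to $HK^{s+1}(G/\Gamma)$ because each step is equivariant under the diagonal right action of $\Gamma$, and $\Gamma \cap G_{s+1} = \{e\}$ preserves the $s$-step relation on the quotient.

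The main obstacle I expect is the bookkeeping of commutator corrections in this triangular inversion. A priori one could imagine commutators accumulating uncontrollably, but the $s$-step hypothesis caps their iteration depth, and at each step the corrections involve only strictly earlier generators, which have already been pinned down to powers of $h_\emptyset$. The cleanest way to formalize this is through the Baker--Campbell--Hausdorff formula on the Mal'cev Lie algebra, where group products and commutators become polynomial expressions and the abelian Möbius identity acquires explicit polynomial correction terms; carrying this through constitutes the technical heart of the argument, after which the smoothness (in fact polynomiality in Mal'cev coordinates) of $P$ is automatic.
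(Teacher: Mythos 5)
Your argument at the level of the group $G$ is essentially the right mechanism, and it is the same one underlying the source the paper cites for this lemma (the paper gives no proof of its own, deferring to Green--Tao, \emph{Linear equations in primes}, Appendix E): the ordered factorization of $HK^{s+1}(G)$ into upper-face elements $h_F \in G_{|F|}$, plus the vanishing of the top face group $G_{s+1}=\{e\}$, shows that a cube supported at a single vertex is trivial, so the forgotten vertex is determined, with polynomial dependence in Mal'cev coordinates. Two local remarks: the dimension count you invoke is off (the target $G^{\{0,1\}^{s+1}_*}$ has dimension $(2^{s+1}-1)\dim G$); what you actually want is the standard fact that an injective continuous homomorphism of simply connected nilpotent Lie groups is a closed embedding with polynomial inverse. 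Also, the commutator bookkeeping you single out as the technical heart evaporates: in the kernel computation each $h_F$ is, by induction, an exact power of $h_\emptyset$, so all earlier factors commute, $h_F = h_\emptyset^{(-1)^{|F|}}$ with no BCH corrections, and the constraint at the full face forces $h_\emptyset = e$.

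The genuine gap is the descent to $HK^{s+1}(G/\Gamma)$, which is where the lemma actually lives. Points of the Host--Kra nilmanifold are $G$-cubes modulo the cube group $HK^{s+1}(\Gamma)$, so different vertices get translated by \emph{different} elements of $\Gamma$; equivariance under the diagonal right action of a single $\gamma \in \Gamma$ is not the relevant property, and the clause ``$\Gamma \cap G_{s+1}=\{e\}$ preserves the $s$-step relation'' carries no content. For $P$ to be well defined on the quotient you must show: if $(z_\omega) \in HK^{s+1}(G)$ has $z_\omega \in \Gamma$ for every $\omega \neq 0^{s+1}$, then $z_{0^{s+1}} \in \Gamma$ (apply this to $(x_\omega)^{-1}(y_\omega)$ for two lifts agreeing mod $\Gamma$ away from $0^{s+1}$). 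This does not follow from your kernel computation, because $(z_\omega)$ need not lie in $HK^{s+1}(\Gamma)$ a priori; it is a separate statement, equivalent to saying that a corner with entries in $\Gamma$ which completes over $G$ also completes over $\Gamma$, and its proof uses the compatibility of $\Gamma$ with the filtration ($\Gamma \cap G_i$ cocompact in $G_i$) together with its own induction. That is precisely the content the paper outsources to Appendix E of Green--Tao, and your proposal needs it before $P$ exists on $HK^{s+1}(G/\Gamma)$ at all; once it is supplied, smoothness does descend as you indicate.
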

Here, $HK^{s + 1}(G/\Gamma)$ is the \textbf{Host-Kra} nilmanifold. One can find relevant definitions and treatments in e.g., \cite[Appendix E]{GT4} or \cite[Chapter 1.5]{T1}. For instance, the proof of the above lemma can be found in \cite[Appendix E]{GT4}.
\begin{lemma}[Dual $U^s$ function decomposition, {\cite[Essentially Proposition 11.2]{GT4}}]\thlabel{dualus}
Let $F(x)$ be a nilcharacter of frequency $\xi$ and Lipschitz norm at most $M$ on a nilmanifold $G/\Gamma$ of step $s$, dimension $r$, and complexity at most $M$. Let $g(n)$ be a polynomial sequence on $G$. We may decompose
$$F(g(n)\Gamma) = \sum_\alpha \prod_{\omega \in \{0, 1\}^{s + 1}_*} C^{|\omega| - 1}F_{\omega, \alpha}(g(n + h \cdot \omega) \Gamma) + O(\epsilon)$$
for all $h$ where there are $O(M/\epsilon)^{O(r)^{O(1)}}$ in the sum and $F_{\omega, \alpha}$ have Lipschitz norm $O(M/\epsilon)^{O(r)^{O(1)}}$, and $F_{\omega, \alpha}$ are nilcharacters of frequency $\xi$.
\end{lemma}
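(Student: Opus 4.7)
The plan is to exploit the parallelepiped structure of the Host-Kra nilmanifold $HK^{s+1}(G/\Gamma)$. For any polynomial sequence $g$ of degree at most $s$ on $G$ and any $h=(h_1,\ldots,h_{s+1})$, the tuple $(g(n+h\cdot\omega)\Gamma)_{\omega\in\{0,1\}^{s+1}}$ is a Host-Kra cube of degree $s+1$. By \thref{word}, the $0^{s+1}$-vertex is a smooth function $P$ of the other $2^{s+1}-1$ vertices, so
\[ F(g(n)\Gamma) \;=\; (F\circ P)\bigl((g(n+h\cdot\omega)\Gamma)_{\omega\in\{0,1\}^{s+1}_*}\bigr), \]
exhibiting $F(g(n)\Gamma)$ as a Lipschitz function on a subvariety of $(G/\Gamma)^{2^{s+1}-1}$ of complexity polynomial in $M$ and dimension $O(r)$, with Lipschitz constant $M^{O(r)^{O(1)}}$ inherited from $F$ and $P$.

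Next I would Fourier expand $F\circ P$ coordinate-by-coordinate: at each factor, first Fourier expand along the vertical torus $G_s/(G_s\cap\Gamma)$ to split the function into nilcharacters of various vertical frequencies, then apply \thref{lipfourier} to the horizontal Lipschitz coefficients on the quotient $G/G_s\Gamma$. Iterating this across the $2^{s+1}-1$ non-base vertices produces a decomposition $\sum_\alpha\prod_\omega C^{|\omega|-1}F_{\omega,\alpha}(x_\omega)+O(\epsilon)$ in which each $F_{\omega,\alpha}$ is a nilcharacter of some frequency $\xi_{\omega,\alpha}$; the bounds $O(M/\epsilon)^{O(r)^{O(1)}}$ on both the number of terms and the Lipschitz norms follow by tracking the complexity estimates of \thref{lipfourier} through each of the $O(r)$-many vertical tori encountered.

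The main obstacle is verifying that every contributing $\xi_{\omega,\alpha}$ can be arranged to equal $\xi$. Here the key point is that $G_s$ is central and abelian, so the valid simultaneous $G_s$-shifts $(g_{s,\omega})_\omega$ preserving the parallelepiped are exactly the kernel of $(g_{s,\omega})\mapsto\sum_\omega(-1)^{|\omega|}g_{s,\omega}$. Two families lie in this kernel: the uniform shift $g_{s,\omega}=g_s$ for every $\omega$, which is valid because $\sum_\omega(-1)^{|\omega|}=0$; and the shifts supported on $\omega\neq 0^{s+1}$ satisfying $\sum_{\omega\neq 0^{s+1}}(-1)^{|\omega|}g_{s,\omega}=0$, which leave the $0^{s+1}$-vertex (and hence $F(g(n)\Gamma)$) fixed. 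Under the uniform shift the left side transforms by $\xi(g_s)$ while the right side transforms by $\prod_{\omega\neq 0^{s+1}}\xi_{\omega,\alpha}(g_s)^{(-1)^{|\omega|-1}}$, forcing this product character to equal $\xi$; under shifts fixing the base vertex the product character must be trivial. A short linear-algebra argument, pairing vertices of opposite parity, shows these two families of constraints together force $\xi_{\omega,\alpha}=\xi$ for every $\omega\neq 0^{s+1}$. Finally, I would average the decomposition against the character of frequency $\xi$ on each copy of $G_s/(G_s\cap\Gamma)$ to project onto precisely the frequency-$\xi$ component, discarding all other Fourier modes. Tracking constants through this averaging yields the claimed bounds on the Lipschitz norms of $F_{\omega,\alpha}$ and on the total number of surviving terms.
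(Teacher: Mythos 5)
Your proposal follows essentially the same route as the paper: \thref{word} together with a partition of unity and Fourier expansion gives the approximate tensor decomposition over the Host--Kra nilmanifold, and the invariance of that nilmanifold under central $G_s$-valued shifts, combined with integration over the vertical torus, pins each factor to vertical frequency $\xi$. The one caveat is that your intermediate step of ``forcing'' the frequencies via termwise character constraints is not literally valid for an approximate identity summed over $\alpha$; the rigorous mechanism is the averaging you end with (and which the paper uses), namely applying the shift that moves the base vertex by $g_s$ and one other vertex $\omega_1$ by the compensating power of $g_s$, so the left side changes by $\xi(g_s)$, and then integrating in $g_s$ against $\overline{\xi(g_s)}$ to project each $F_{\omega_1,\alpha}$ onto its frequency-$\xi$ component.
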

\begin{proof}
By \thref{word}, combined with a partition of unity onto boxes and Fourier expanding as suggested by \cite{TT}, it follows that for any $(x_\omega)_{\omega \in \{0, 1\}^{s + 1}}$ in the Host-Kra nilmanifold,
$$F(x_{0^{s + 1}}) = \sum_\alpha \prod_{\omega \in \{0, 1\}^{s + 1}_*} C^{|\omega| - 1}F_{\alpha, \omega}(x_\omega) + O(\epsilon).$$
with the prescribed bounds in the above lemma. Since the Host-Kra nilmanifold is (by definition) invariant under face transformations, it is invariant by shifting $x_{0^{s + 1}}$ by $g_s \in G_s$ and $x_{\omega}$ by $g_s^{(-1)^{|\omega|}}$. As a result,
$$F(x_{0^{s + 1}}) = \sum_\alpha \overline{\xi(g_s)}C^{|\omega_1| - 1} F_{\alpha, \omega_1}(x_{\omega_1}g_s^{(-1)^{|\omega_1|}}) \prod_{\omega \neq \omega_1} C^{|\omega| - 1}F_{\alpha, \omega}(x_\omega) + O(\epsilon).$$
Integrating in $g_s$, and Fourier expanding $F_{\alpha, \omega_1}$ into vertical characters, it follows that we may assume that $F_{\alpha, \omega_1}$ has vertical frequency $\xi$ for all $\omega_1$. The lemma follows.
\end{proof}

\section{Quantitative Equidistribution on Nilmanifolds}
In this section, we shall state the necessary lemmas for the proof of \thref{equidistribution}. We shall work with the following conventions of nilmanifolds (some of which we recalled from the introduction), many of which can be found in \cite{GT2}.
\begin{definition}
Let $G_{\circ}$ be a filtered nilpotent Lie Group with $\Gamma$ a discrete cocompact subgroup. Suppose $X_1, \dots, X_n$ is a Mal'cev basis of $G$ with integer linear combinations of $X_i$ being sent to $\Gamma$ under the exponential map. Let $\mathfrak{g}$ be the Lie algebra of $G$.
 \begin{itemize}
    \item The \textit{dimension} of $G$, denoted $m$ is the dimension of $G$.
    \item The \textit{nonlinearity} dimension, denoted $m_*$, is the dimension of $G_2$ minus the dimension of $G/[G, G]$.
    \item We will assume that the Mal'cev basis satisfies a \textit{nesting property}: that is,
    $$[\mathfrak{g}, X_i] \subseteq \text{span}(X_{i + 1}, \dots, X_m).$$
    \item Suppose
    $$[X_i, X_j] = \sum_k c_{i, j, k} X_k$$
    with $c_{i, j, k} = \frac{a}{b}$ a rational. The \textit{complexity} of a Nilmanifold is the maximum of all $\max(|a|, |b|)$.
 \end{itemize}
\end{definition}
 In \cite{GT2}, Green and Tao prove the following
\begin{theorem}[\hspace{-0.01in}{\cite[Theorem 2.9]{GT2}}]
Let $m, d \ge 0$, $0 < \delta < \frac{1}{2}$, and $N \ge 1$. Suppose that $G$ is an $m$-dimensional nilmanifold together with a filtration $G_\circ$ with complexity at most $\frac{1}{\delta}$. Suppose $g$ is an element $\text{poly}(\mathbb{Z}, G_\circ)$ with $(g(n)\Gamma)_{n \in [N]}$ is not $\delta$-equidistributed. Then there is a horizontal character $\eta$ with $0 < |\eta| \le \delta^{-O_{m, d}(1)}$ such that
$$\|\eta \circ g\|_{C^\infty[N]} \ll \delta^{-O_{m, d}(1)}.$$
\end{theorem}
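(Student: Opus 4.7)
The plan is to induct simultaneously on the step $s$ of the filtration $G_\circ$ and on the dimension $m$ of $G$. The base case is $s = 1$, where $G/\Gamma$ is a torus of dimension $m$, and $g \in \text{Poly}(\mathbb{Z}, G_\circ)$ is a polynomial sequence with values in that torus. Non-equidistribution by at least $\delta$ means that, after testing against a Lipschitz function and Fourier-expanding on the torus, some non-trivial integer frequency $k$ with $|k| \ll \delta^{-O_{m,d}(1)}$ gives a Weyl sum $\mathbb{E}_{n \le N} e(k \cdot g(n))$ of size at least $\delta^{O_{m,d}(1)}$. Repeated application of the van der Corput inequality (one derivative for each degree of $g$) reduces this to a linear Weyl sum, which by the standard quantitative Weyl inequality yields $\|k \cdot g\|_{C^\infty[N]} \ll \delta^{-O_{m,d}(1)}$, producing the required horizontal character.

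For the inductive step, let $G_s$ be the last non-trivial group in the filtration, necessarily central in $G$, and let $\pi\colon G/\Gamma \to G/(G_s\Gamma)$ be the projection. Fourier-expand the test Lipschitz function along the vertical torus $G_s/(G_s \cap \Gamma)$ into components $F_\xi$ transforming by vertical characters $\xi$. The $\xi = 0$ component descends to a Lipschitz function on the quotient $G/(G_s\Gamma)$ whose dimension is strictly smaller; if this component witnesses non-equidistribution, one applies the inner inductive hypothesis to $\pi \circ g$ on $G/(G_s\Gamma)$ and lifts the resulting horizontal character (and its quantitative bounds) back up through $\pi^{*}$ to $G$. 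If instead the failure of equidistribution is caused by a non-trivial vertical frequency $\xi \neq 0$, one applies van der Corput in the variable $n$ to the sum $\mathbb{E}_{n \le N} \xi(g(n)) F_\xi(\pi(g(n)))$: for many shifts $h$, the sum
\[
\mathbb{E}_{n \le N} \xi\bigl(g(n+h) g(n)^{-1}\bigr)\, F_\xi(\pi(g(n+h))) \,\overline{F_\xi(\pi(g(n)))}
\]
is large. The multiplicative derivative $g(n+h)g(n)^{-1}$, viewed modulo lower filtration layers, has top-degree term reduced by one, and iterating enough van der Corput steps reduces the analysis to a polynomial sequence on $G_s$, which is abelian; one then extracts a horizontal character by the base case and integrates in $h$ using the abelian Weyl criterion to obtain a non-trivial horizontal character of $G$ with the required $C^\infty[N]$ bound.

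The hard part will be managing the quantitative losses so that the exponents remain of the form $O_{m,d}(1)$ through the induction. Each van der Corput step costs a square root (i.e.\ squares $\delta$) and introduces averaging over an additional parameter, while each descent to $G/(G_s\Gamma)$ or to the abelianized sequence requires passing through the Mal'cev coordinates of a subnilmanifold; the nesting property of the Mal'cev basis is what permits these quotients to be taken with complexity bounded polynomially in $1/\delta$. The bookkeeping of horizontal characters is subtle because characters produced on lower-dimensional quotients must be lifted and extended to $G$ without blowing up their norms, and characters produced from the vertical-frequency case must be combined with the $h$-averaged abelian Weyl data to recover a single $\eta$ on $G$. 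Provided one carries out this descent carefully, exactly as in Green--Tao's original argument, every step loses only a polynomial factor in $\delta$ (with exponent depending on $m, d$), and composing finitely many such losses yields the stated bound $|\eta| \le \delta^{-O_{m,d}(1)}$ with $\|\eta \circ g\|_{C^\infty[N]} \ll \delta^{-O_{m,d}(1)}$.
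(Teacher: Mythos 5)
The paper does not actually prove this statement: it is quoted verbatim as Theorem 2.9 of Green and Tao \cite{GT2} and used as a black box (the quantitative inputs used later in the paper are the refinements recorded in \thref{quantnil} and \thref{quant}). So your proposal can only be measured against the original argument in \cite{GT2}, whose architecture it does follow: Fourier expansion of the test function, Weyl's inequality in the abelian base case, descent to $G/(G_s\Gamma)$ when the vertical frequency vanishes, and van der Corput plus induction when it does not.

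As a proof, however, two essential points are misstated or missing. First, in the nontrivial vertical frequency case the van der Corput step does not reduce matters to ``a polynomial sequence on $G_s$'': in \cite{GT2} one passes to the pair sequence $n \mapsto (g(n+h),g(n))$, which (after quotienting by the central diagonal, under which $F_\xi \otimes \overline{F_\xi}$ is invariant) lives on a proper subgroup of $G\times G$, essentially $G\times_{G_2}G$, of strictly smaller nonlinearity dimension, and it is to this auxiliary nilmanifold that the induction hypothesis is applied; the last filtration group $G_s$ by itself does not see enough of $g$ to yield a horizontal character of $G$. Second, and more seriously, the heart of the Green--Tao proof is the passage from ``for $\gg \delta^{O(1)}N$ values of $h$ there is some horizontal character $\eta_h$ of bounded size with $\|\eta_h\circ g_h\|_{C^\infty[N]}$ small'' to a single $h$-independent nontrivial character $\eta$ of $G$ with $\|\eta\circ g\|_{C^\infty[N]} \ll \delta^{-O_{m,d}(1)}$; this requires pigeonholing in $\eta_h$ and then a careful linearization-in-$h$ analysis of the resulting frequencies (Vinogradov-type lemmas occupying a substantial part of \cite{GT2}), which your sketch compresses to ``integrates in $h$ using the abelian Weyl criterion'' --- that is not yet an argument. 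Likewise, the degree $>1$ case needs its own induction on degree with the same bookkeeping, not just ``one van der Corput per degree.'' None of this affects the paper, which cites the theorem rather than proving it; but as a standalone proof your proposal is an outline of the known strategy with the hardest quantitative steps left unaddressed.
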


\begin{lemma}[\hspace{0.01in}{\cite[Lemma A.17]{GT2}}]\thlabel{lip}
Suppose $G' \subseteq G$ be a closed subgroup whose Mal'cev basis is a rational combination of the Mal'cev basis of $G$ but with numerator and denominator at most $Q$. Let $d$ be the metric on $G/\Gamma$ and $d'$ be an adapted metric on $G'/\Gamma'$. Then
$$d' \ll Q^{O(1)} d.$$
\end{lemma}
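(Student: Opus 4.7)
The plan is to reduce the metric comparison to a quantitative linear algebra statement about the change of basis between the Mal'cev bases, then propagate this through the group operation via the Baker--Campbell--Hausdorff formula. Recall that the metric $d$ on $G/\Gamma$ is defined in terms of the Mal'cev coordinate map $\psi: G \to \mathbb{R}^m$ attached to the basis $X_1,\ldots,X_m$: for $x,y \in G$ one sets
\[
d(x\Gamma, y\Gamma) = \inf \sum_{i=0}^{n-1} \min\bigl(|\psi(x_i^{-1}x_{i+1})|, |\psi(x_{i+1}^{-1}x_i)|\bigr),
\]
the infimum taken over chains $x_0=x, x_n=y$ modulo $\Gamma$, and similarly $d'$ is defined using the Mal'cev coordinate map $\psi': G' \to \mathbb{R}^{m'}$ attached to the basis $Y_1,\ldots,Y_{m'}$ of $G'$. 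Since $\Gamma' = \Gamma \cap G'$ and chains in $G'$ are in particular chains in $G$, it suffices to show that for every $g \in G'$ in a sufficiently small neighborhood of the identity, we have $|\psi'(g)| \ll Q^{O(1)} |\psi(g)|$.

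The first step is to set up the linear change of basis. By hypothesis, each $Y_i$ can be written as $Y_i = \sum_{j=1}^{m} c_{ij} X_j$ with rationals $c_{ij}$ whose numerators and denominators are bounded by $Q$. Since the $Y_i$ are linearly independent, the $m' \times m$ matrix $C = (c_{ij})$ has rank $m'$, and by Cramer's rule applied to any $m' \times m'$ minor of maximal nonzero determinant, the corresponding left inverse $C^+$ satisfies $|C^+| \ll Q^{O(m')} = Q^{O(1)}$ (the implicit constants depending only on $m'$, which is fixed in the ambient nilpotent setting). In particular, whenever $g \in G'$ is close enough to the identity that $\psi$ and $\psi'$ are diffeomorphisms onto neighborhoods of the origin, the map sending Lie-algebra coordinates of $g$ in the $X$-basis to its $Y$-coordinates is given by applying $C^+$, and hence expands Euclidean norms by at most a factor $Q^{O(1)}$.

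The second step transfers this Lie-algebra comparison to the group. Mal'cev coordinates of the second kind are related to coordinates of the first kind (i.e.\ Lie-algebra coordinates via $\exp$) by a polynomial change of variables whose coefficients, by the Baker--Campbell--Hausdorff formula applied in the nesting Mal'cev basis, are universal rational combinations of structure constants $c_{ijk}$. Because the nilmanifolds in question have bounded complexity (Mal'cev structure constants of size $\ll Q^{O(1)}$ in both bases, the second being inherited from the first), this polynomial change of variables and its inverse are Lipschitz with constant $Q^{O(1)}$ on any fixed compact neighborhood of the identity. Combining this with the linear bound from step one yields $|\psi'(g)| \ll Q^{O(1)} |\psi(g)|$, and plugging into the definition of the metrics gives $d'(g\Gamma', h\Gamma') \ll Q^{O(1)} d(g\Gamma, h\Gamma)$.

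The main technical obstacle is the passage through BCH: one must verify that the polynomial map between the two coordinate systems has degree depending only on the step of the filtration and coefficients polynomial in the complexities of the two Mal'cev bases, and that the nesting property of the Mal'cev basis of $G'$ can be arranged without losing control of $Q$. Once these bookkeeping details are carried out (which is the content of Green--Tao's \cite[Appendix A]{GT2}), the metric comparison follows as outlined.
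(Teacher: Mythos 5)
The paper does not actually prove this lemma: it is imported as a black box from Green--Tao, \cite[Lemma A.17]{GT2}, so there is no internal argument to compare yours against, and what you have written is in effect an attempt to reconstruct the Green--Tao appendix. Your toolbox is the right one for that task --- Cramer's rule to invert the rational change of basis with only a $Q^{O(1)}$ loss, and Baker--Campbell--Hausdorff to show that the passage between Lie-algebra coordinates and Mal'cev coordinates of the second kind is polynomial with $Q^{O(1)}$-bounded coefficients --- and these are indeed the ingredients of \cite[Appendix A]{GT2}.

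However, the reduction in your first paragraph has a genuine gap for the inequality as stated. The distance $d(x\Gamma,y\Gamma)$ is an infimum over chains in $G$ and over lattice elements $\gamma\in\Gamma$, and neither the intermediate points of a near-optimal chain nor the optimal $\gamma$ need lie in $G'$ or $\Gamma'$. The observation that chains in $G'$ are in particular chains in $G$, combined with a coordinate estimate, only yields the opposite (easy) direction $d \ll Q^{O(1)} d'$ on $G'$: there one pushes a $G'$-chain into $G$ and needs $|\psi(g)|\ll Q^{O(1)}|\psi'(g)|$. To obtain $d'\ll Q^{O(1)}d$ one must convert an arbitrary $G$-chain into a $G'$-chain, and this is where the real work lies: one needs the Green--Tao lemmas asserting that for points at bounded distance the chain infimum is comparable, up to $Q^{O(1)}$, to the one-step quantity $\min_{\gamma\in\Gamma}|\psi(x^{-1}y\gamma)|$, together with an argument exploiting the $Q$-rationality of $G'$ that the minimizing $\gamma$ may be replaced by an element of $\Gamma'=\Gamma\cap G'$ at cost $Q^{O(1)}$; only then does your pointwise estimate $|\psi'(g)|\ll Q^{O(1)}|\psi(g)|$ for $g\in G'$ near the identity finish the proof. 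Note also that the application made of the lemma in this paper (an $L$-Lipschitz function on $G/\Gamma$ restricts to a $Q^{O(1)}L$-Lipschitz function on $G'/\Gamma'$) in fact uses the easy direction $d\ll Q^{O(1)}d'$, which your ``chains in $G'$ are chains in $G$'' step would prove once the coordinate inequality is stated in the matching direction; so part of the issue is keeping track of which direction of the metric comparison is being established and which one is needed.
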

Thus, an $L$-Lipschitz function on $G$ is an $O(M^{O(1)}L)$-Lipschitz function on $G'$. The following is due to \cite{TT}:
\begin{lemma}\thlabel{quantnil}
Let $m \ge m_* \ge 0$ be integers, $0 < \delta < \frac{1}{2}$, $N \ge 1$. Let $G/\Gamma$ be a filtered nilmanifold of degree $d$, nonlinearity dimension $m_*$, dimension $m$, and complexity at most $\frac{1}{\delta}$. Let $g: \mathbb{Z} \to G$ be a polynomial sequence. If $(g(n)\Gamma)_{n \in [N]}$ is not $\delta$-equidistributed, then there exists a horizontal character $\eta$ with $0 < |\eta| \le \delta^{-\exp((m + m_*)^{C_d})}$ such that 
$$\|\eta \circ g\|_{C^\infty[N]} \le \delta^{-\exp((m + m_*)^{C_d})}$$
\end{lemma}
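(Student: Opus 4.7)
The plan is to follow the strategy of Green and Tao's quantitative Leibman theorem [GT2, Theorem 2.9], but with the constants tracked carefully enough to extract the sharper dependence $\delta^{-\exp((m+m_*)^{C_d})}$ instead of the implicit $\delta^{-O_{m,d}(1)}$. The starting point is a quantitative Weyl criterion: by approximating a Lipschitz test function on $G/\Gamma$ by characters of the vertical torus and further Fourier-decomposing on the horizontal torus, non-$\delta$-equidistribution of $(g(n)\Gamma)_{n\in[N]}$ produces a nontrivial horizontal character $\eta$ of controlled height whose composition $\eta\circ g$ is not equidistributed modulo $1$ on $[N]$. The lossy constants in this reduction are polynomial in $m$ and in $\delta^{-1}$ only, so they are harmless.

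Next, I would induct on the degree $d$ of the filtration. The base case is abelian, where the statement reduces to the standard quantitative Weyl inequality for polynomial sequences in a torus of dimension $m - m_*$: non-equidistribution forces a character $\eta$ of height $\delta^{-O(m)^{O(1)}}$ whose smoothness norm on the polynomial coefficients is at most the same quantity, giving the base estimate directly. For the inductive step, suppose $(g(n)\Gamma)$ is not $\delta$-equidistributed. Two cases: if the projection to the horizontal torus $G/[G,G]\Gamma$ fails to be $\delta'$-equidistributed, apply the abelian case and we are done. Otherwise, the failure of equidistribution lies ``above'' the horizontal torus; expand a Lipschitz test function into vertical characters of $G_d/(G_d\cap\Gamma)$ and apply a van der Corput / Furstenberg factor reduction to replace $g$ by one of its derivatives $\partial_h g$, which lies in a filtered nilpotent subgroup of one lower degree. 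The output is a new non-equidistribution problem on a filtered nilmanifold of smaller degree, whose complexity is at most the previous complexity to the power $O(m)^{O(1)}$, and whose dimension and nonlinearity dimension are bounded by the original $m$ and $m_*$ (since we are passing to closed subgroups generated by Mal'cev basis elements).

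The main obstacle, and the reason the bound is an iterated exponential in the dimensions, is the bookkeeping of how complexity compounds through the $d$ iterations of the descent. At step $j$ of the induction, the complexity $M_j$ of the nilmanifold satisfies a recursion of the form $M_{j+1} \le M_j^{(m+m_*)^{C}}$, and the horizontal character $\eta_j$ produced has height at most $M_j^{(m+m_*)^{C}}$. Iterating $d$ times with $M_0 = \delta^{-1}$ gives $M_d \le \delta^{-(m+m_*)^{Cd}}$, which fits inside the target bound $\delta^{-\exp((m+m_*)^{C_d})}$ for $C_d$ sufficiently large depending on $d$. The delicate part is ensuring that each descent step (i) lands in a genuine filtered subgroup whose Mal'cev basis is a rational combination of the ambient Mal'cev basis with the required complexity control (as provided by \thref{lip}), and (ii) only inflates the Lipschitz norm of the test function by a polynomial factor in the current complexity, so that the polynomial-in-complexity losses compound correctly into the iterated exponential rather than into a tower.

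Finally, the horizontal character $\eta$ produced at the last step of the descent can be pulled back through the intervening factor maps to a horizontal character of the original $G$, and the resulting smoothness norm $\|\eta\circ g\|_{C^\infty[N]}$ obeys the claimed bound because each pull-back step, again by \thref{lip}, multiplies heights and smoothness norms by at most a fixed power of the current complexity. This completes the sketch, with all the non-routine work concentrated in the careful complexity accounting of the inductive descent.
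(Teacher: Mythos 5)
First, note that the paper does not prove \thref{quantnil} at all: it is quoted as a black box from Tao--Ter\"av\"ainen \cite{TT}, stated immediately after the weaker Green--Tao form \cite[Theorem 2.9]{GT2}, so your sketch is being measured against the argument in \cite{TT}/\cite{GT2} rather than anything internal to this paper. Your overall plan (Weyl-type reduction to a horizontal character, then an inductive descent via vertical Fourier expansion and van der Corput) is the right skeleton, but two of your key claims do not survive contact with that argument. The first is the bookkeeping claim that the descent consists of only $d$ iterations, each keeping the dimension and nonlinearity dimension bounded by the original $m$ and $m_*$ ``since we are passing to closed subgroups.'' The van der Corput step does not pass to a subgroup of $G$: it replaces $g$ by the derivative sequence $\partial_h g$ living on a fibre-product-type group (roughly $G\times_{G_2}G$), whose dimension is larger than $m$, and the zero-vertical-frequency case reduces dimension by one while keeping the degree fixed. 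The induction is therefore effectively on dimension as well as degree, with up to $O(m+m_*)$ stages of polynomial-in-complexity losses; that is precisely why the known bound has the single-exponential shape $\delta^{-\exp((m+m_*)^{C_d})}$. Your recursion $M_{j+1}\le M_j^{(m+m_*)^C}$ iterated only $d$ times would yield $\delta^{-(m+m_*)^{Cd}}$, i.e.\ a polynomial-in-$\delta^{-1}$ bound with exponent polynomial in the dimension, which is substantially stronger than what \cite{TT} prove and is not justified by the argument you describe.

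The second, more serious, gap is the final step. After van der Corput and the inductive hypothesis you do not obtain a single character controlling $g$; you obtain, for many shifts $h$, some horizontal character $\eta_h$ of bounded height on the derivative group with $\|\eta_h(\partial_h g)\|_{C^\infty[N]}$ small. The crux of the Leibman theorem in \cite{GT2} (their Section 7 together with the Vinogradov-type and bracket-polynomial lemmas) is pigeonholing these to a common character and then converting control of the \emph{derivatives} $\partial_h g$ for many $h$ into control of $g$ itself by a single horizontal character of the original $G$. Your sentence ``the horizontal character produced at the last step can be pulled back through the intervening factor maps'' does not address this: the factor maps relate different groups and different sequences, and \thref{lip} only controls metric distortion under passage to rational subgroups, not the derivative-to-function upgrade. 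Without that step the proposal proves nothing about $\|\eta(g)\|_{C^\infty[N]}$, so as written the proof has a genuine hole exactly where all the work in \cite{GT2} and \cite{TT} is concentrated.
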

In fact, following an observation of \cite{K1}, if $g: \mathbb{Z}/N\mathbb{Z} \to G/\Gamma$ is a polynomial sequence, then one can say something even more: by taking averages up to $N, 2N, 3N, 4N, \dots$, and observing that the $C^\infty[N]$ norm of the horizontal character does not depend on the $N$, we in fact have that
$$\|\eta \circ g\|_{C^\infty[N]} = 0.$$
Thus,
\begin{lemma}\thlabel{quant}
Let $m \ge m_* \ge 0$ be integers, $0 < \delta < \frac{1}{2}$, $N \ge 1$. Let $G/\Gamma$ be a filtered nilmanifold of degree $d$, nonlinearity dimension $m_*$, dimension $m$, and complexity at most $\frac{1}{\delta}$. Let $g: \mathbb{Z} \to G$ be a polynomial sequence whose quotient map $g(n)\Gamma$ is periodic modulo $N$, a large prime. If $(g(n)\Gamma)_{n \in [N]}$ is not $\delta$-equidistributed, then there exists a horizontal character $\eta$ with $0 < |\eta| \le \delta^{-\exp((m + m_*)^{C_d})}$ such that 
$$\|\eta \circ g\|_{C^\infty[N]} = 0$$
\end{lemma}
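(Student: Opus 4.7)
The plan is to bootstrap from \thref{quantnil} by exploiting the periodicity of $g(n)\Gamma$ modulo $N$ to pass to intervals $[kN]$ for arbitrarily large $k$, then use a pigeonhole argument to select a single horizontal character that works for infinitely many $k$, and finally observe that the $C^\infty[kN]$ norm being uniformly bounded as $k \to \infty$ forces the nontrivial Taylor coefficients to lie in $\mathbb{Z}$.

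First I would observe that because $g(n)\Gamma$ is periodic modulo $N$, the orbit $(g(n)\Gamma)_{n \in [kN]}$ is simply $k$ concatenated copies of the orbit $(g(n)\Gamma)_{n \in [N]}$. Hence non-$\delta$-equidistribution on $[N]$ immediately gives non-$\delta$-equidistribution on $[kN]$ for every positive integer $k$ (with the same $\delta$, since the empirical distributions agree). Applying \thref{quantnil} on $[kN]$ produces, for each $k$, a horizontal character $\eta_k$ with $0 < |\eta_k| \le \delta^{-\exp((m+m_*)^{C_d})}$ and $\|\eta_k \circ g\|_{C^\infty[kN]} \le \delta^{-\exp((m+m_*)^{C_d})}$.

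Next, since the bound on $|\eta_k|$ is uniform in $k$, and horizontal characters with bounded coefficients form a finite set, the pigeonhole principle yields a single horizontal character $\eta$ and an infinite set $\mathcal{K} \subseteq \mathbb{Z}_{>0}$ such that $\eta_k = \eta$ for $k \in \mathcal{K}$. Writing the Taylor expansion
$$\eta \circ g(n) = \sum_{j=0}^d \alpha_j \binom{n}{j},$$
the definition of the $C^\infty[M]$ norm gives $\sup_{1 \le j \le d} M^j \|\alpha_j\|_{\mathbb{R}/\mathbb{Z}} = \|\eta \circ g\|_{C^\infty[M]}$. For each $k \in \mathcal{K}$ we then get
$$(kN)^j \|\alpha_j\|_{\mathbb{R}/\mathbb{Z}} \le \delta^{-\exp((m+m_*)^{C_d})}$$
for every $1 \le j \le d$. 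Letting $k \to \infty$ through $\mathcal{K}$ forces $\|\alpha_j\|_{\mathbb{R}/\mathbb{Z}} = 0$ for all $j \ge 1$, which is precisely $\|\eta \circ g\|_{C^\infty[N]} = 0$.

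The main (very minor) subtlety is ensuring that the character produced by \thref{quantnil} can be chosen uniformly across the scales $kN$; this is handled by the pigeonhole step above, which is possible precisely because the bound on $|\eta_k|$ is a function of $\delta, m, m_*, d$ and does not depend on the length of the interval. Everything else is a straightforward application of the definitions of periodicity, $\delta$-equidistribution, and the $C^\infty[M]$ seminorm.
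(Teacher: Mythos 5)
Your proof is correct and follows essentially the same route as the paper (and Kuca's observation): exploit periodicity to transfer non-$\delta$-equidistribution to the scales $kN$, apply \thref{quantnil} at each scale with bounds independent of $k$, and let $k \to \infty$ to force the nonconstant Taylor coefficients to vanish mod $1$. Your pigeonhole step, fixing a single horizontal character valid for arbitrarily large $k$, cleanly handles a detail the paper leaves implicit.
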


\section*{Acknowledgements}
We would like to thank Terence Tao for valuable guidance and detailed comments on previous drafts, Tim Austin, Jaume de Dios, Asgar Jamneshan, Redmond McNamara, Minghao Pan, and Rushil Raghavan for helpful comments and conversations, and Raymond Chu for encouragement. Thanks also to Borys Kuca for helpful comments. We would also like to thank Ashwin Sah and Mehtaab Sawhney and the anonymous referee for reading the article carefully and offering the author useful suggestions. The author is supported by the National Science Foundation Graduate Research Fellowship Program under Grant No. DGE-2034835.

\bibliographystyle{amsplain}


\begin{dajauthors}
\begin{authorinfo}[jl]
  James Leng\\
  UCLA\\
  Los Angeles, CA, United States\\
  jamesleng\imageat{}math\imagedot{}ucla\imagedot{}edu 
\end{authorinfo}
\end{dajauthors}

\end{document}